\newtheorem{theorem}{Theorem}[section]
\newtheorem{definition}{Definition}[section]
\newtheorem{lemma}{Lemma}[section]
\newtheorem{remark}{Remark}[section]
\newtheorem{corollary}{Corollary}[section]
\numberwithin{equation}{section}
\begin{document}
\title[Navier-Stokes equations]{Well-posedness of  three-dimensional isentropic  compressible  Navier-Stokes equations with degenerate viscosities and far field vacuum}

\author{Zhouping Xin}
\address[Z.P. Xin]{The Institute of Mathematical Sciences, The Chinese University of Hong Kong, Shatin, N.T., Hong Kong.}
\email{\tt zpxin@ims.cuhk.edu.hk}

\author{Shengguo Zhu}
\address[S. G.  Zhu]{Mathematical Institute, University of Oxford,  Oxford OX2 6GG, UK; School of Mathematical Sciences, Monash University, Clayton, 3800, Australia; The Institute of Mathematical Sciences, The Chinese University of Hong Kong, Shatin, N.T., Hong Kong.}
\email{\tt shengguo.zhu@maths.ox.ac.uk}

\begin{abstract}

In this paper, the Cauchy problem for the  three-dimensional (3-D) isentropic  compressible  Navier-Stokes equations is considered. When viscosity coefficients are given as a constant multiple of the density's power ($\rho^\delta$ with $0<\delta<1$), based on some analysis  of  the nonlinear  structure of this system,   we identify the class of initial data admitting a local regular solution with far field vacuum and  finite energy  in some inhomogeneous Sobolev spaces  by introducing some new variables and initial  compatibility conditions, which solves an open problem   of degenerate viscous flow partially mentioned by Bresh-Desjardins-Metivier \cite{bd}, Jiu-Wang-Xin \cite{jiuping} and so on.   Moreover,   in contrast to the classical theory in the case of  the constant viscosity, we show   that one can not obtain any global regular solution whose $L^\infty$ norm of $u$ decays to zero as time $t$ goes to infinity.\end{abstract}

\date{Oct.  31th, 2018}
\subjclass[2010]{Primary: 35A01, 35B40, 76N10; Secondary: 35B65, 35A09} \keywords{Compressible Navier-Stokes equations, Three dimensions,  Far field vacuum, Degenerate viscosity,  Classical solutions, Local-in-time well-posedness, Asymptotic behavior}.

\maketitle

\section{Introduction}

The time evolution of the mass density $\rho\geq 0$ and the velocity $u=\left(u^{(1)},u^{(2)},u^{(3)}\right)^\top\in \mathbb{R}^3$ of a general viscous isentropic
compressible  fluid occupying a spatial domain $\Omega\subset \mathbb{R}^3$ is governed by the following isentropic  compressible  Navier-Stokes equations (\textbf{ICNS}):
\begin{equation}
\label{eq:1.1}
\begin{cases}
\rho_t+\text{div}(\rho u)=0,\\[4pt]
(\rho u)_t+\text{div}(\rho u\otimes u)
  +\nabla
   P =\text{div} \mathbb{T}.
\end{cases}
\end{equation}
Here, $x=(x_1,x_2,x_3)\in \Omega$, $t\geq 0$ are the space and time variables, respectively. For the polytropic gases, the constitutive relation is given by
\begin{equation}
\label{eq:1.2}
P=A\rho^{\gamma}, \quad A>0,\quad  \gamma> 1,
\end{equation}
where $A$ is  an entropy  constant and  $\gamma$ is the adiabatic exponent. $\mathbb{T}$ denotes the viscous stress tensor with the  form
\begin{equation}
\label{eq:1.3}
\begin{split}
&\mathbb{T}=\mu(\rho)\big(\nabla u+(\nabla u)^\top\big)+\lambda(\rho)\text{div}u\,\mathbb{I}_3,
\end{split}
\end{equation}
 where $\mathbb{I}_3$ is the $3\times 3$ identity matrix,
\begin{equation}
\label{fandan}
\mu(\rho)=\alpha  \rho^\delta,\quad \lambda(\rho)=\beta  \rho^\delta,
\end{equation}
for some  constant $\delta\geq 0$,
 $\mu(\rho)$ is the shear viscosity coefficient, $\lambda(\rho)+\frac{2}{3}\mu(\rho)$ is the bulk viscosity coefficient,  $\alpha$ and $\beta$ are both constants satisfying

 \begin{equation}\label{10000}\alpha>0,\quad \text{and} \quad   2\alpha+3\beta\geq 0.
 \end{equation}


Let $\Omega=\mathbb{R}^3$. Assuming  $0<\delta<1$, we look for a smooth solution
$(\rho,u)$ with finite energy  to the Cauchy problem for (\ref{eq:1.1})-(\ref{10000}) with the following  initial data and far field behavior:
\begin{align}
&(\rho,u)|_{t=0}=(\rho_0(x)\geq 0,  u_0(x)) \ \ \ \ \ \ \  \ \ \   \  \ \text{for} \quad  x\in \mathbb{R}^3,\label{initial}\\[2pt]
&(\rho,u)(t,x)\rightarrow  (0,0) \quad  \text{as}\ \ \  |x|\rightarrow \infty \qquad   \text{for} \quad  t\geq 0.  \label{far}
\end{align}

In the theory of gas dynamics, the \textbf{CNS} can be derived  from the Boltzmann equations through the Chapman-Enskog expansion, cf. Chapman-Cowling \cite{chap} and Li-Qin  \cite{tlt}. Under some proper physical assumptions,  the viscosity coefficients and heat conductivity coefficient $\kappa$ are not constants but functions of the absolute temperature $\theta$ such as:
\begin{equation}
\label{eq:1.5g}
\begin{split}
\mu(\theta)=&a_1 \theta^{\frac{1}{2}}F(\theta),\quad  \lambda(\theta)=a_2 \theta^{\frac{1}{2}}F(\theta), \quad \kappa(\theta)=a_3 \theta^{\frac{1}{2}}F(\theta)
\end{split}
\end{equation}
for some   constants $a_i$ $(i=1,2,3)$ (see \cite{chap}).  Actually for the cut-off inverse power force models, if the intermolecular potential varies as $r^{-a}$,
 where $ r$ is intermolecular distance,  then in (\ref{eq:1.5g}):
 $$F(\theta)=\theta^{b}\quad  \text{with}\quad   b=\frac{2}{a} \in [0,+\infty).$$
  In particular, for Maxwellian molecules,
$a = 4$  and $b=\frac{1}{2}$;
 for rigid elastic spherical molecules,
$a=\infty$ and  $b=0$; while for  ionized gas, $a=1$ and $b=2$ (see \S 10 of \cite{chap}).

According to Liu-Xin-Yang \cite{taiping}, for  isentropic and polytropic fluids , such a dependence is inherited through the laws of Boyle and Gay-Lussac:
$$
P=R\rho \theta=A\rho^\gamma,  \quad \text{for \ \ constant} \quad R>0,
$$
i.e., $\theta=AR^{-1}\rho^{\gamma-1}$,  and one finds that the viscosity coefficients are functions of the density of the form $(\ref{fandan})$ with $0<\delta<1$ in many cases.

Throughout this paper, we adopt the following simplified notations, most of them are for the standard homogeneous and inhomogeneous Sobolev spaces:
\begin{equation*}\begin{split}
 & \|f\|_s=\|f\|_{H^s(\mathbb{R}^3)},\quad |f|_p=\|f\|_{L^p(\mathbb{R}^3)},\quad \|f\|_{m,p}=\|f\|_{W^{m,p}(\mathbb{R}^3)},\quad  |f|_{C^k}=\|f\|_{C^k(\mathbb{R}^3)},  \\[4pt]
 & \|f\|_{XY(t)}=\| f\|_{X([0,t]; Y(\mathbb{R}^3))},\ \  \dot{H}^\iota=\{f:\mathbb{R}^3\rightarrow \mathbb{R}\ | \|f\|^2_{\dot{H}^\iota}=\int |\xi|^{2\iota}|\hat{f}(\xi)|^2\text{d}\xi<\infty\},\\[4pt]
 & D^{k,r}=\{f\in L^1_{loc}(\mathbb{R}^3): |f|_{D^{k,r}}=|\nabla^kf|_{r}<+\infty\},\quad D^k=D^{k,2},  \\[4pt]
  & D^{1}=\{f\in L^6(\mathbb{R}^3):  |f|_{D^1}= |\nabla f|_{2}<\infty\},\quad |f|_{D^1}=\|f\|_{D^1(\mathbb{R}^3)},\\
 & \|f\|_{X_1 \cap X_2}=\|f\|_{X_1}+\|f\|_{X_2}, \quad  \int_{\mathbb{R}^3}  f \text{d}x  =\int f, \ \ X([0,T]; Y(\mathbb{R}^3))= X([0,T]; Y).
\end{split}
\end{equation*}
 A detailed study of homogeneous Sobolev spaces  can be found in \cite{gandi}.

When  $\inf_x {\rho_0(x)}>0$, the local well-posedness of classical solutions for (\ref{eq:1.1})-(\ref{far}) follows from the standard symmetric hyperbolic-parabolic structure which satisfies the well-known Kawashima's condition, c.f. \cite{KA, nash}. However, such an approach fails  in the presence of the vacuum due to the degeneracies of the time evolution and viscosities. Generally vacuum will appear in the far field under some physical requirements such as finite total mass in  $ \mathbb{R}^3$.  One of the  main issues in the presence of vacuum is   to understand the behavior of the  velocity field near the vacuum.
For the constant  viscosity flow  ($\delta=0$ in (\ref{fandan})), a remedy was suggested  by Cho-Choe-Kim \cite{CK3}, where they imposed  initially a {\it compatibility condition}:
\begin{equation*}
-\text{div} \mathbb{T}_{0}+\nabla P(\rho_0)=\sqrt{\rho_{0}} g,\quad \text{for\  \ some} \ \ g\in L^2(\mathbb{R}^3),
\end{equation*}
which  leads to  $$(\sqrt{\rho}u_t, \nabla u_t)\in L^\infty ([0,T_{*}]; L^2)$$  for a  short  time $T_{*}>0$. Then  they established  successfully   the local well-posedness of smooth solutions with vacuum in some Sobolev spaces of $\mathbb{R}^3$, and also showed that this kind of initial compatibility condition is necessary for their solution class.

For density-dependent viscosities  ($\delta>0$ in (\ref{fandan})),  the strong degeneracy of the momentum equations in  \eqref{eq:1.1}  near the vacuum creates serious difficulties for the well-posedness of both  strong and weak solutions.  Though some significant achievements    \cite{bd2, bd, ding, zhu,  lz, hyp,   taiping, vassu,  zz, zyj, sz34} have been obtained, however,  a lot of fundamental questions remain open, including the local well-posedness of classical solutions with finite energy in multi-dimensions  for $\delta \in (0,1)$. Our
result obtained in this paper has taken a first step toward this direction.

Compared with flows of constant viscosities, the analysis of the   degeneracies  in the momentum equations $(\ref{eq:1.1})_2$ requires some special attentions. Indeed,  for $\delta=0$ in  \cite{CK3},  the uniform ellipticity of the Lam\'e operator $L$ defined by
\begin{equation*}Lu \triangleq -\alpha\triangle u-(\alpha+\beta)\nabla \mathtt{div}u
\end{equation*}
plays an essential role in the regularity estimates on $u$. One can use the standard elliptic theory to estimate $|u|_{D^{k+2}}$  by the $D^k$-norm of all other terms in the  momentum equations.  However, for $\delta>0$,  the viscosity coefficients vanish in the presence of vacuum, which  makes it difficult to adapt the approach  in \cite{CK3} to the current case.

For the cases  $\delta\in (0,\infty)$,  if  $\rho>0$, $(\ref{eq:1.1})_2$ can be formally rewritten as
\begin{equation}\label{qiyi}
\begin{split}
u_t+u\cdot\nabla u +\frac{A\gamma}{\gamma-1}\nabla\rho^{\gamma-1}+\rho^{\delta-1} Lu=\psi \cdot  Q(u),
\end{split}
\end{equation}
where the quantities $\psi$ and $Q(u)$ are given by
\begin{equation}\label{operatordefinition}
\begin{split}
\psi \triangleq & \nabla \log \rho \quad \text{when}\quad \delta=1;\\
\psi \triangleq & \frac{\delta}{\delta-1} \nabla \rho^{\delta-1}\quad \text{when}\quad \delta \in (0,1)\cup (1,\infty);\\
Q(u)\triangleq &\alpha(\nabla u+(\nabla u)^\top)+\beta\mathtt{div}u\mathbb{I}_3.
\end{split}
\end{equation}
When $\delta=1$, according to  (\ref{qiyi})-(\ref{operatordefinition}),  the degeneracies of the time evolution and  viscosities on $u$ caused by the  vacuum have  been transferred to the possible singularity of the  term  $\nabla \log \rho$, which actually  can be controlled by a symmetric hyperbolic system with a source term  $\nabla \text{div}u$ in  Li-Pan-Zhu \cite{sz3}. Then via establishing a uniform a priori estimates in $L^6\cap D^1\cap D^2$  for $\nabla \log \rho$, the existence of 2-D local classical solution with far field vacuum   to  (\ref{eq:1.1}) has been obtained in \cite{sz3}, which also applies to the 2-D shallow water equations.
When $\delta> 1$,  (\ref{qiyi})-(\ref{operatordefinition}) imply that actually the velocity $u$ can be governed by a nonlinear degenerate  parabolic system without singularity   near the vacuum region.
Based on this observation, by using some hyperbolic  approach which bridges the parabolic system (\ref{qiyi}) when $\rho>0$ and the hyperbolic one  $u_t+u\cdot\nabla u=0$ when $\rho=0$,   the existence of 3-D local classical solutions with vacuum to  (\ref{eq:1.1}) was established in  Li-Pan-Zhu \cite{sz333}.
The corresponding global well-posedness in some homogeneous Sobolev spaces  has been established  by Xin-Zhu \cite{zz} under some initial smallness  assumptions.

However,  such approaches used in \cite{sz3,sz333,zz} fail to apply to the   case  $\delta\in (0,1)$. Indeed, when vacuum appears only at far fields, the velocity field $u$ is still     governed by  the  quasi-linear parabolic system (\ref{qiyi})-(\ref{operatordefinition}).  Yet,  some new essential  difficulties arise compared with the case $\delta\geq 1$:
\begin{enumerate}
\item
first, the source term contains a stronger singularity as:
$$\nabla \rho^{\delta-1}=(\delta-1)\rho^{\delta-1}\nabla \log \rho,$$
 whose behavior will become more singular than that of $\nabla \log \rho $ in \cite{sz3} due to $\delta-1<0$ when the  density $\rho \rightarrow 0$;
\item
second,  the coefficient $\rho^{\delta-1}$ in front of the Lam\'e operator $ L$ will tend to $\infty$ as $\rho\rightarrow 0$ in the far filed instead of equaling to $1$ in \cite{sz3} or tending to $0$ in \cite{sz333,zz} Then   it is  necessary  to show that the term $\rho^{\delta-1} Lu$ is well defined.
\end{enumerate}

Therefore, the three quantities
$$(\rho^{\gamma-1}, \nabla \rho^{\delta-1}, \rho^{\delta-1} Lu)$$
will play significant roles in our analysis on the higher order  regularities of the fluid velocity $u$. Due to  this observation, we first introduce a proper class of solutions called regular solutions to  the Cauchy problem (\ref{eq:1.1})-(\ref{far}).

\begin{definition}\label{d1}
 Let $T> 0$ be a finite constant. A solution $(\rho,u)$ to the Cauchy problem (\ref{eq:1.1})-(\ref{far}) is called a regular solution in $ [0,T]\times \mathbb{R}^3$ if $(\rho,u)$  satisfies this problem in the sense of distribution and:
\begin{equation*}\begin{split}
&(\textrm{A})\quad \rho>0,\quad   \rho^{\gamma-1}\in C([0,T]; H^3), \quad  \nabla \rho^{\delta-1}\in L^\infty([0,T]; L^\infty\cap D^{2});\\
&(\textrm{B})\quad u\in C([0,T]; H^3)\cap L^2([0,T]; H^4), \quad  u_t \in C([0,T]; H^1)\cap L^2([0,T] ; D^2), \\
&\qquad \ \  \rho^{\frac{\delta-1}{2}}\nabla u \in C([0,T]; L^2), \quad  \rho^{\frac{\delta-1}{2}}\nabla u_t \in  L^\infty([0,T_*];L^2),\\
&\qquad \ \ \rho^{\delta-1}\nabla u \in L^\infty([0,T]; D^1),\quad   \rho^{\delta-1}\nabla^2 u \in C([0,T]; H^1)\cap   L^2([0,T]; D^2).
\end{split}
\end{equation*}
\end{definition}

\begin{remark}\label{r1}
First, it follows from  the Definition \ref{d1}  that $\nabla \rho^{\delta-1}\in L^\infty$, which means that the vacuum occurs if and only  in the far field.

Second, we introduce some physical quantities that will be used in this paper:
\begin{align*}
m(t)=&\int \rho(t,x)\quad \textrm{(total mass)},\\
\mathbb{P}(t)=&\int \rho(t,x)u(t,x) \quad \textrm{(momentum)},\\
E_k(t)=&\frac{1}{2}\int \rho(t,x)|u(t,x)|^{2}\quad \textrm{ (total kinetic energy)},\\
E(t)=&E_k(t)+\int \frac{P}{\gamma-1} \quad \textrm{ (total energy)}.
\end{align*}
Actually, it follows from the definition that a regular solution satisfies the conservation of  total mass and   momentum (see Lemma \ref{lemmak}).  Furthermore, it satisfies the energy equality (see (4.5)).  Note that the conservation of  momentum is not  clear   for the  strong solution with vacuum  to the flows of constant viscosities \cite{CK3}. In this sense, the definition of regular solutions here is consistent with the physical background of the compressible Navier-Stokes equations.
\end{remark}

The regular solutions select  velocity in a physically reasonable way when the density approaches the  vacuum at far fields. Under the help of this notion of solutions, the  momentum equations can be reformulated into a special quasi-linear parabolic system with some possible singular source terms near the  vacuum, and  the coefficients in front  of  Lam\'e operator $ L$ will tend to $\infty$ as $\rho\rightarrow 0$ in the far filed. However, the problem becomes trackable through an  elaborate linearization and  approximation process.

Now we are ready to state the main results in this paper.  First we prove the existence of the unique regular solution with $\nabla \rho^{\delta-1}\in C([0,T]; D^1\cap D^2)$ to (\ref{eq:1.1})-(\ref{far}).
\begin{theorem}\label{th2} Let  parameters  $(\gamma,\delta, \alpha,\beta)$ satisfy
\begin{equation}\label{canshu}
\gamma>1,\quad 0<\delta<1, \quad \alpha>0, \quad 2\alpha+3\beta\geq 0.
\end{equation}
  If the initial data $( \rho_0, u_0)$ satisfies
\begin{equation}\label{th78}
\begin{split}
&\rho_0>0,\quad (\rho^{\gamma-1}_0, u_0)\in H^3, \quad \nabla \rho^{\delta-1}_0\in D^1\cap D^2,   \quad \nabla \rho^{\frac{\delta-1}{2}}_0\in L^4,\\
\end{split}
\end{equation}
and the initial  compatibility conditions:
\begin{equation}\label{th78zx}
\displaystyle
  \nabla u_0=\rho^{\frac{1-\delta}{2}}_0 g_1,\quad \   \ \  Lu_0= \rho^{1-\delta}_0g_2,\quad \ \
 \nabla \Big(\rho^{\delta-1}_0Lu_0\Big)=\rho^{\frac{1-\delta}{2}}_0g_3,
\end{equation}
for some $(g_1,g_2,g_3)\in L^2$, then there exist a  time $T_*>0$ and a unique regular solution $(\rho, u)$  in $ [0,T_*]\times \mathbb{R}^3$ to the Cauchy problem (\ref{eq:1.1})-(\ref{far}) satisfying:
\begin{equation}\label{reg11}\begin{split}
& t^{\frac{1}{2}}u\in L^\infty([0,T_*];D^4),\quad  t^{\frac{1}{2}}u_t\in L^\infty([0,T_*];D^2)\cap L^2([0,T_*] ; D^3),\\
&u_{tt}\in L^2([0,T_*];L^2),\quad   t^{\frac{1}{2}}u_{tt}\in L^\infty([0,T_*];L^2)\cap L^2([0,T_*];D^1),\\
& \rho^{1-\delta}\in L^\infty([0,T_*];  L^\infty\cap D^{1,6} \cap D^{2,3} \cap D^3), \\
&  \nabla \rho^{\delta-1}\in C([0,T]; D^1\cap D^2), \  \nabla \log \rho \in L^\infty([0,T_*];L^\infty\cap L^6\cap  D^{1,3} \cap D^2).
\end{split}
\end{equation}

Moreover, if $1<\gamma\leq 2$, $(\rho, u)$ is a classical solution to   (\ref{eq:1.1})-(\ref{far}) in $(0,T_*]\times \mathbb{R}^3$.
\end{theorem}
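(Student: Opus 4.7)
My approach is a linearization--iteration scheme in a functional space custom-built around the weighted norms forced by the compatibility conditions. First I would reformulate the system in the new unknowns
\begin{equation*}
\varphi\triangleq\tfrac{A\gamma}{\gamma-1}\rho^{\gamma-1},\qquad \eta\triangleq\rho^{\delta-1},
\end{equation*}
so that (\ref{eq:1.1})$_1$ together with (\ref{qiyi}) reads
\begin{equation*}
\begin{cases}
\varphi_t+u\cdot\nabla\varphi+(\gamma-1)\varphi\,\mathrm{div}\,u=0,\\[2pt]
\eta_t+u\cdot\nabla\eta+(\delta-1)\eta\,\mathrm{div}\,u=0,\\[2pt]
u_t+u\cdot\nabla u+\nabla\varphi+\eta\,Lu=\tfrac{\delta}{\delta-1}\nabla\eta\cdot Q(u).
\end{cases}
\end{equation*}
The three hypotheses in (\ref{th78zx}) are exactly the statements that $\rho_0^{(\delta-1)/2}\nabla u_0$, $\rho_0^{\delta-1}Lu_0$, and $\rho_0^{(\delta-1)/2}\nabla(\rho_0^{\delta-1}Lu_0)$ lie in $L^2$; in view of the velocity equation these translate into $u_t|_{t=0}\in L^2$ and $\rho_0^{(\delta-1)/2}\nabla u_t|_{t=0}\in L^2$, which are precisely the data needed to start the higher-order energy iteration.

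\textbf{Linearized problem and basic estimates.} Given a previous iterate $v$, solve the transport equations for $\varphi,\eta$ along the flow of $v$, and then the linear degenerate parabolic equation
\begin{equation*}
u_t+v\cdot\nabla u+\nabla\varphi+\eta\,Lu=\tfrac{\delta}{\delta-1}\nabla\eta\cdot Q(v)
\end{equation*}
for $u$. Solvability would be obtained via a vanishing-viscosity regularization $\eta\mapsto\eta+\varepsilon$, producing a uniformly parabolic problem, followed by $\varepsilon$-independent estimates. The basic energy bound comes from testing against $u_t$, using the identity
\begin{equation*}
\int\eta\,Lu\cdot u_t\,\mathrm dx=\frac{d}{dt}\!\int\!\Big(\tfrac{\alpha}{2}\eta\bigl|\nabla u+(\nabla u)^\top\bigr|^2+\beta\,\eta(\mathrm{div}\,u)^2\Big)\mathrm dx+\text{l.o.t.},
\end{equation*}
whose left-hand side is essentially $\tfrac{d}{dt}\bigl\|\rho^{(\delta-1)/2}\nabla u\bigr\|_{L^2}^2$ up to commutators handled via the transport equation for $\eta$. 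Differentiating in time and testing the resulting equation against $u_{tt}$ yields bounds for $\rho^{(\delta-1)/2}\nabla u_t$ and $u_{tt}$; spatial differentiation of the momentum equation then upgrades $\nabla u$ to $H^2$ and $\nabla u_t$ to $H^1$.

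\textbf{The main obstacle: closing the weighted top-order bounds.} The delicate point is controlling $\rho^{\delta-1}\nabla^2 u$ together with $\nabla\rho^{\delta-1}\in L^\infty\cap D^1\cap D^2$, since $\eta=\rho^{\delta-1}$ blows up at infinity and $L$ enjoys no uniform ellipticity. My strategy is to treat $\eta Lu$ as a single object recovered algebraically from the momentum equation,
\begin{equation*}
\eta\,Lu=-u_t-u\cdot\nabla u-\nabla\varphi+\tfrac{\delta}{\delta-1}\nabla\eta\cdot Q(u),
\end{equation*}
and estimate its $L^2$, $D^1$, and $D^2$ norms by applying up to two spatial derivatives and using the $u_t,\nabla u_t$ bounds produced above. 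The resulting loop, in which $\|\nabla\eta\|_{D^2}$ feeds back into the source and competes with top-order $u$-regularity, is closed by propagating the initial regularity of $\eta$ forward along the flow via Gronwall on the transport equation, and by absorbing cross terms into the density-weighted dissipation $\int\eta|Lu|^2\,\mathrm dx$. The time-weighted bounds such as $t^{1/2}u\in L^\infty([0,T_*];D^4)$ and $t^{1/2}u_{tt}\in L^\infty([0,T_*];L^2)$ are then recovered by a standard parabolic smoothing argument once $u_t|_{t=0}\in L^2$ is secured.

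\textbf{Fixed point, uniqueness, and classical regularity.} With uniform bounds in a space $X_{T_*}$ encoding all of (\ref{reg11}), I would close the scheme by proving contraction in a weaker norm, for instance $L^\infty([0,T_*];L^2)$ for $u$, $\varphi$, and $\nabla\eta$ with appropriate density weights, yielding a fixed point $(\rho,u)$ which by lower semicontinuity lies in $X_{T_*}$ and hence inherits every property claimed in (\ref{reg11}). Uniqueness among regular solutions follows from an analogous weak-norm comparison. Finally, when $1<\gamma\leq 2$, the bound $\rho^{\gamma-1}\in C([0,T_*];H^3)$ together with $\gamma-1\leq 1$ transfers the regularity to $\rho$ itself, and the regularity in (\ref{reg11}) combined with the velocity equation then yields $u_t$ and $\nabla^2u$ continuous on $(0,T_*]\times\mathbb R^3$, so that $(\rho,u)$ is a classical solution there.
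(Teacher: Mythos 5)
Your overall architecture — reformulating in the pressure variable and the coefficient $\rho^{\delta-1}$, linearizing via transport equations plus a degenerate Lam\'e--parabolic equation, using the three compatibility conditions to initialize $u_t$ and $\rho^{\frac{\delta-1}{2}}\nabla u_t$, recovering the weighted top-order norms by reading $\rho^{\delta-1}Lu$ off the momentum equation and invoking elliptic regularity, and closing by contraction in a weak norm — is essentially the paper's. Your linearization also implicitly avoids the pitfall the paper flags: since you transport the scalar $\eta$ and use $\nabla\eta$ itself as the source coefficient, the gradient relation between the coefficient of $Lu$ and the coefficient of $Q$ is preserved, which is exactly the paper's fix.

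There is, however, one genuine gap: your regularization addresses the wrong degeneracy. For $0<\delta<1$ the coefficient $\eta=\rho^{\delta-1}$ does not vanish near vacuum — it is bounded \emph{below} (since $\rho$ is bounded above) and blows up as $|x|\to\infty$. Replacing $\eta$ by $\eta+\varepsilon$ therefore does not ``produce a uniformly parabolic problem'': the obstruction to solving the linearized equation by standard theory is the unboundedness of the diffusion coefficient at infinity, not a missing lower bound. (The paper's $\sqrt{h^2+\epsilon^2}$ serves a different purpose — its linearized $h$-equation is inhomogeneous and need not stay positive — and would be unnecessary in your homogeneous transport scheme.) The essential missing layer is the approximation of the initial data by $\rho_0$ bounded away from zero (the paper's $\phi_0+\eta$), which caps $\rho^{\delta-1}$ from \emph{above}, renders the linearized problem classically solvable, and then must be removed by a limit $\eta\to 0$. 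That limit in turn requires uniform control of the auxiliary quantities $\rho^{1-\delta}$ and $\nabla\log\rho$ (the paper's $\varphi$ and $f$): without them you cannot obtain compactness of the products $\rho^{\delta-1}\nabla^2u^k$, justify that $\rho^{\delta-1}Lu$ and $\rho^{\delta-1}u$ are even in $L^2$, or verify the far-field decay $\rho^{\delta-1}u\to 0$ on which the Lam\'e estimate for $L(\rho^{\delta-1}u)$ — the step hidden in your ``apply up to two spatial derivatives'' — actually rests. As written, your scheme never makes the unbounded coefficient tractable, so both the solvability of the linearization and the passage to the limit are unjustified.
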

\begin{remark}\label{r2} The conditions (\ref{th78})-(\ref{th78zx}) in Theorem \ref{th2}  identify a class of admissible initial data that
makes the  problem \eqref{eq:1.1}--\eqref{far} solvable, which are satisfied by, for example,
$$
\rho_0(x)=\frac{1}{1+|x|^{2a}}, \quad u_0(x)\in C_0^3(\mathbb{R}^3), \quad \text{and} \quad \frac{3}{4(\gamma-1)}< a<\frac{1}{4(1-\delta)}.$$
Particularly, when  $\nabla u_0$ is  compactly supported,
  the compatibility conditions (\ref{th78zx}) are satisfied automatically.
\end{remark}

Second, we can also prove the existence of the unique regular solution with $\nabla \rho^{\delta-1}\in C([0,T]; L^q\cap D^{1,3}\cap D^2)$ to (\ref{eq:1.1})-(\ref{far}).
\begin{theorem}\label{th2-1} Let  (\ref{canshu}) hold, and $q\in (3,+\infty)$ be a fixed constant.
  If the initial data $( \rho_0, u_0)$ satisfies
\begin{equation}\label{th78-1}
\begin{split}
&\rho_0>0,\quad (\rho^{\gamma-1}_0, u_0)\in H^3, \quad \nabla \rho^{\delta-1}_0\in L^q\cap D^{1,3}\cap D^2,   \quad \nabla \rho^{\frac{\delta-1}{2}}_0\in L^6,\\
\end{split}
\end{equation}
and the initial  compatibility conditions (\ref{th78zx}), then there exist a  time $T_*>0$ and a unique regular solution $(\rho, u)$  in $ [0,T_*]\times \mathbb{R}^3$ to the Cauchy problem (\ref{eq:1.1})-(\ref{far}) satisfying:
\begin{equation}\label{reg11-1}\begin{split}
& t^{\frac{1}{2}}u\in L^\infty([0,T_*];D^4),\quad  t^{\frac{1}{2}}u_t\in L^\infty([0,T_*];D^2)\cap L^2([0,T_*] ; D^3),\\
&u_{tt}\in L^2([0,T_*];L^2),\quad   t^{\frac{1}{2}}u_{tt}\in L^\infty([0,T_*];L^2)\cap L^2([0,T_*];D^1),\\
& \rho^{1-\delta}\in L^\infty([0,T_*]; L^\infty\cap D^{1,q} \cap D^{1,6}  \cap D^{1,\infty}\cap D^{2,3} \cap D^{3}), \\
& \nabla \rho^{\delta-1}\in C([0,T]; L^q\cap D^{1,3}\cap D^2),\\
& \nabla \log \rho \in L^\infty([0,T_*];L^q \cap L^6 \cap L^\infty\cap  D^{1,3} \cap D^{2}).
\end{split}
\end{equation}

Moreover, if $1<\gamma\leq 2$, $(\rho, u)$ is a classical solution to   (\ref{eq:1.1})-(\ref{far}) in $(0,T_*]\times \mathbb{R}^3$.
\end{theorem}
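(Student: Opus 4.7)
The plan is to follow the same overall architecture as the proof of Theorem \ref{th2}, adapting the linearization–iteration–compactness scheme to accommodate the weaker decay of $\nabla\rho^{\delta-1}$ (now only in $L^q$ rather than $L^2$) but with a stronger local integrability ($D^{1,3}$ instead of $D^1$). First I would recast the problem in the variables $\phi=\rho^{\gamma-1}$, $\varphi=\rho^{1-\delta}$, and $\psi=\tfrac{\delta}{\delta-1}\nabla\rho^{\delta-1}$. From the continuity equation $\phi$ and $\varphi$ both satisfy transport equations with source $(\gamma-1)\phi\,\dd u$ and $-(1-\delta)\varphi\,\dd u$ respectively, while $\psi$ satisfies a transport equation of the form $\psi_t+u\cdot\nabla\psi+\psi\cdot\nabla u+\delta\varphi^{-1}\nabla\dd u=0$ (after differentiating $\varphi$'s equation). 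The momentum equation reduces to (\ref{qiyi})–(\ref{operatordefinition}), a nonlinear parabolic system whose coefficient $\rho^{\delta-1}=1/\varphi$ blows up at the far field.

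The core construction would then be a linearized iteration. Given a known $(\varphi^{k-1},\psi^{k-1},u^{k-1})$, solve the linear transport equations for $\phi^k,\varphi^k,\psi^k$ by the method of characteristics, and then solve the linear parabolic equation
\[
u^k_t+u^{k-1}\cdot\nabla u^k+\tfrac{A\gamma}{\gamma-1}\nabla\phi^k+\varphi^{k-1}Lu^k=\psi^{k-1}\cdot Q(u^k)
\]
for $u^k$, noting that $\varphi^{k-1}>0$ everywhere guarantees strict parabolicity on each compact set. The a priori estimates to push through are those prescribed in (\ref{reg11-1}): uniform bounds on $\|\phi^k\|_{H^3}$, on $\|\psi^k\|_{L^q\cap D^{1,3}\cap D^2}$, on $\|\nabla\varphi^k\|_{L^\infty}$ (so the vacuum stays at infinity), on $\|u^k\|_{H^3}$, and the weighted bounds for $\rho^{(\delta-1)/2}\nabla u$, $\rho^{\delta-1}\nabla^2 u$, etc. The compatibility conditions (\ref{th78zx}) are used precisely as in Theorem~\ref{th2} to control $u^k_t$ at $t=0$ in $H^1$, and the $\sqrt{t}$-weighted higher-order bounds are then recovered by differentiating the momentum equation in $t$ and testing with $u^k_{tt}$.

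The main obstacle, and the reason this is a separate theorem rather than a corollary of Theorem~\ref{th2}, is the $L^q$ estimate on $\psi$. Since $\psi$ obeys a transport equation with right-hand side involving $\varphi^{-1}\nabla\dd u$, one cannot simply use the energy method; instead I would multiply by $|\psi|^{q-2}\psi$ and integrate, picking up the commutator $\int |\psi|^q \dd u$ and the source $\int |\psi|^{q-2}\psi\cdot\varphi^{-1}\nabla\dd u$. Closing the latter requires $\nabla\dd u\in L^q$, which by Sobolev embedding in $\mathbb{R}^3$ is implied by $\nabla^2 u\in H^1$ only when combined with additional interpolation; this is where the assumption $\nabla\rho^{\frac{\delta-1}{2}}_0\in L^6$ enters, since it controls $\rho^{(\delta-1)/2}\nabla u$ in $L^6$ and lets us bound the singular factor $\varphi^{-1}$ against $\nabla\dd u$ in a way compatible with the $L^q$ estimate. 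Once this $L^q$ bound is in hand, the $D^{1,3}$ and $D^2$ estimates on $\psi$ follow from differentiating the transport equation and using standard parabolic regularity on $u$.

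With the a priori estimates uniform in $k$, strong convergence of a subsequence in weaker norms (e.g.\ $u^k\to u$ in $C([0,T_*];H^2)$ and $\psi^k\rightharpoonup \psi$ weakly-$*$) is obtained from a Cauchy-type energy estimate on differences, exactly as in Theorem~\ref{th2}, and the limit is a regular solution with the integrability class (\ref{reg11-1}). Uniqueness is proved by a similar difference estimate, and time continuity in the strong topologies follows from the equations once weak continuity plus norm continuity are established. Finally, the classical-solution assertion for $1<\gamma\leq 2$ is obtained from (\ref{reg11-1}) by the same Sobolev/embedding argument as in the end of the proof of Theorem~\ref{th2}, after checking that $\nabla\phi=\nabla\rho^{\gamma-1}$ is Hölder continuous up to $t=0^+$ when $\gamma\le 2$.
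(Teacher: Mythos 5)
Your overall architecture — reformulate in the variables $\phi,\varphi,\psi$, linearize, iterate, and pass to the limit, with the new ingredient being an $L^q$ rather than $L^2$ energy estimate for $\psi$ — is the right route, and it matches the paper's brief remark that Theorem~\ref{th2-1} is obtained from the proof of Theorem~\ref{th2} with slight modifications (the paper omits the details). However, there are two concrete gaps in the details you do supply.

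First, the linearization you propose does not preserve the gradient structure of $\psi$, and this is exactly the pitfall the paper flags in the discussion surrounding \eqref{eq:cccq-fenxi}--\eqref{relationvainish}. If you linearize $\psi$'s transport equation with a frozen coefficient in front of $\nabla\dd v$, then the iterate $\psi^{k}$ is no longer a gradient, and the coefficient $\rho^{\delta-1}$ multiplying $L$ in the parabolic equation is no longer related to $\psi^{k}$ by $\nabla(\rho^{\delta-1})=\tfrac{\delta-1}{\delta}\psi^{k}$. The consequence, spelled out in \eqref{relationvainish}, is that when you test the parabolic equation by $u^{k}$ and integrate by parts on $\rho^{\delta-1}Lu^{k}$, the resulting term $\nabla\rho^{\delta-1}\cdot Q(u^{k})\cdot u^{k}$ cannot be absorbed, because $\nabla\rho^{\delta-1}$ is not your $\psi^{k}$. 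The paper's workaround (see \eqref{h} and \eqref{li4}) is to linearize the equation for $h=\phi^{2e}$ first and then \emph{define} $\psi=\tfrac{a\delta}{\delta-1}\nabla h$ so the gradient relation holds identically along the iteration; this structure is what makes the $L^2$ and higher energy estimates close. Your sketch needs this modification or it will not produce closed a priori bounds even before you get to the $L^q$ issue.

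Second, your account of where $\nabla\rho^{(\delta-1)/2}_0\in L^6$ is used is off. That assumption is a condition on the initial density alone, while $\rho^{(\delta-1)/2}\nabla u$ at $t=0$ is controlled by the compatibility condition $g_1\in L^2$ in \eqref{th78zx}, not by the $L^6$ hypothesis. In the paper's scheme the $L^4$ (Theorem~\ref{th2}) or $L^6$ (Theorem~\ref{th2-1}) hypothesis on $\nabla\phi^e_0$ enters through the weighted initial-data bounds \eqref{gai11qq}--\eqref{gai11qqcv} that control quantities like $|\phi^e_0\nabla^2\phi_0|_2$ and $|\phi^e_0\nabla(\psi_0\cdot Q(u_0))|_2$, which in turn give the $\limsup_{\tau\to 0}$ estimates for $|\sqrt{h}\nabla u_t(\tau)|_2$ in \eqref{zhu15vbwsx}; with $\psi_0$ only in $L^q$ rather than $L^2$, the exponent in these interpolations must be raised from $4$ to $6$. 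Relatedly, your claim that the $L^q$ estimate on $\psi$ needs ``$\nabla\dd u\in L^q$'' from $\nabla^2u\in H^1$ plus interpolation only covers $q\le 6$; for $q>6$ one needs to exploit the additional regularity $\phi^{2e}\nabla^2u\in D^1\cap D^2$ (equivalently $\rho^{\delta-1}\nabla^2 u\in W^{1,6}\cap W^{2,2}$ after Sobolev) to push the source into $L^q$ for all $q<\infty$. As written, your $L^q$ argument is not closed for the full range of $q$ allowed by the theorem.
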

\begin{remark}\label{r2-1} The conditions (\ref{th78-1}) and (\ref{th78zx})  in Theorem \ref{th2-1} identify a class of admissible initial data that
makes the  problem \eqref{eq:1.1}--\eqref{far} solvable, which are satisfied by, for example,
$$
\rho_0(x)=\frac{1}{1+|x|^{2a}}, \quad u_0(x)\in C_0^3(\mathbb{R}^3), \quad \text{and} \quad \frac{3}{4(\gamma-1)}< a<\frac{1-3/q}{2(1-\delta)}.$$
It should be pointed out that, for such kind of initial data,  when $q$ is chosen large enough,  the range of $a$ is wider than that of a shown in the example of Remark \ref{r2}. However, for general initial data,  the conclusion obtained in any one of Theorems \ref{th2}-\ref{th2-1} cannot be implied by that of the other one.
\end{remark}

\begin{remark}\label{rxiangrong}
 The compatibility conditions (\ref{th78zx}) are also necessary for the existence of  regular  solutions $(\rho, u)$ obtained in Theorems \ref{th2}-\ref{th2-1}.
In particular, the one shown in $(\ref{th78zx})_2$ $((\ref{th78zx})_3)$  plays a key role in the derivation of $u_t \in L^\infty([0,T_*];L^2(\mathbb{R}^3))$ $(\rho^{\frac{\delta-1}{2}}\nabla u_t \in  L^\infty([0,T_*];L^2(\mathbb{R}^3)))$, which will be used in the uniform estimates for  $|u|_{D^2}$ $(|u|_{D^3})$.
\end{remark}

A natural question is   whether the local solution in Theorems \ref{th2}-\ref{th2-1} can be extended globally in time, and what  the large time behavior is. In contrast to the classical theory for the constant viscosity case \cite{HX1,mat,zx}, we show the following somewhat surprising phenomenon that such an extension is impossible if the velocity field decays to zero as $t\rightarrow +\infty$ and the initial total momentum is non-zero.
First, based on the  physical quantities introduced  in Remark \ref{r1},  we define  a solution class  as follows:
\begin{definition}\label{d2}
Let $T>0$ be any constant. For   the Cauchy problem (\ref{eq:1.1})-(\ref{far}),  a classical solution     $(\rho,u)$ is said to be in $ D(T)$ if $(\rho,u)$  satisfies the following conditions:
\begin{equation*}\begin{split}
&(\textrm{A})\quad \text{Conservation of total mass:}\quad 0<m(0)=m(t)<\infty \ \text{for  any}  \ t\in [0,T];\\[2pt]
&(\textrm{B})\quad \text{Conservation of momentum:}\quad   0<|\mathbb{P}(0)|=|\mathbb{P}(t)|<+\infty \ \text{for  any} \ \ t\in [0,T];\\[2pt]
&(\textrm{C})\quad \text{Finite kinetic energy:}  \quad  0<E_k(t)<\infty  \ \text{for  any}  \ t\in [0,T].
\end{split}
\end{equation*}
\end{definition}

Then one has:
\begin{theorem}\label{th:2.20}
Let  parameters  $(\gamma,\delta, \alpha,\beta)$ satisfy
\begin{equation}\label{canshu-22}
\gamma \geq 1,\quad \delta\geq 0, \quad \alpha>0, \quad 2\alpha+3\beta\geq 0.
\end{equation}  Then for   the Cauchy problem (\ref{eq:1.1})-(\ref{far}), there is no classical solution $(\rho,u)\in D(\infty)$  with
\begin{equation}\label{eq:2.15}
\limsup_{t\rightarrow +\infty} |u(t,x)|_{\infty}=0.
\end{equation}

\end {theorem}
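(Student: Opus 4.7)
The plan is to argue by contradiction, relying only on the three properties packaged into the definition of the class $D(\infty)$: the conservation of total mass, the conservation of momentum, and finite kinetic energy. Suppose that a classical solution $(\rho,u)\in D(\infty)$ exists and satisfies (\ref{eq:2.15}). First I would record the two standing identities guaranteed by $D(\infty)$: for every $t\geq 0$,
\begin{equation*}
  m(t)=m(0)\in(0,\infty),\qquad |\mathbb{P}(t)|=|\mathbb{P}(0)|\in(0,\infty).
\end{equation*}

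The core estimate is a one-line $L^{\infty}$--$L^{1}$ bound on the momentum. Writing the momentum component-wise and pulling $|u|$ out in sup-norm, one has pointwise $\rho|u|\leq |u(t,\cdot)|_\infty\,\rho$, hence
\begin{equation*}
  |\mathbb{P}(t)|=\Bigl|\int \rho(t,x)u(t,x)\,\dif x\Bigr|\leq |u(t,\cdot)|_\infty\int\rho(t,x)\,\dif x=m(0)\,|u(t,\cdot)|_\infty .
\end{equation*}
Combining this with conservation of momentum and then taking the $\limsup$ as $t\to+\infty$ gives
\begin{equation*}
  0<|\mathbb{P}(0)|=\limsup_{t\to+\infty}|\mathbb{P}(t)|\leq m(0)\cdot\limsup_{t\to+\infty}|u(t,\cdot)|_\infty=0,
\end{equation*}
which is the desired contradiction. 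The role of finite kinetic energy in the definition of $D(\infty)$ is only to ensure that $\rho u\in L^{1}$ (via Cauchy--Schwarz, $|\mathbb{P}(t)|\leq \sqrt{2m(0)E_k(t)}$) so that $\mathbb{P}(t)$ is well defined, after which the above $L^{\infty}$--$L^{1}$ bound finishes the argument.

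Because the class $D(\infty)$ is defined to already include conservation of mass and momentum, there is essentially no analytic obstacle in the proof itself. The only step that would warrant a brief justification, for completeness, is why a classical solution on $[0,\infty)\times\mathbb{R}^3$ in $D(\infty)$ actually verifies those two conservation laws in the strong pointwise-in-$t$ sense used above: conservation of mass follows from integrating the continuity equation $(\ref{eq:1.1})_1$ over $\mathbb{R}^3$ and using the far-field condition (\ref{far}), while conservation of momentum follows from integrating $(\ref{eq:1.1})_2$ and using, together with (\ref{far}), the sufficient spatial decay of $\rho u\otimes u$, $P(\rho)$ and $\mathbb{T}$ built into the regularity of a classical solution so that all boundary terms at infinity vanish. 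After this verification, no further estimates are needed and the theorem follows from the three displayed lines above.
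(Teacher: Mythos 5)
Your argument is correct and is essentially the same as the paper's: both use the conservation of mass and momentum built into $D(\infty)$ to derive the lower bound $|u(t)|_\infty\geq |\mathbb{P}(0)|/m(0)>0$, contradicting (\ref{eq:2.15}). The only cosmetic difference is that you bound $|\mathbb{P}(t)|\leq m(0)|u(t)|_\infty$ directly, whereas the paper routes through the kinetic energy via Cauchy--Schwarz ($|\mathbb{P}(t)|\leq\sqrt{2m(t)E_k(t)}$ together with $E_k(t)\leq\tfrac{1}{2}m(0)|u(t)|_\infty^2$), yielding the identical bound.
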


According to  Theorems \ref{th2}-\ref{th:2.20} and Remark \ref{r1}, one shows that
\begin{corollary}\label{th:2.20-c}
 Let (\ref{canshu}) hold and
\begin{equation}\label{decaycanshu}1< \gamma \leq \frac{3}{2}, \quad  \delta\in \big[\gamma-1,1\big).
\end{equation} Assume that  $m(0)>0$ and $|\mathbb{P}(0)|>0$.
Then  for   the Cauchy problem (\ref{eq:1.1})-(\ref{far}),  there is no global  regular  solution $(\rho,u)$ defined in Definition \ref{d1}
satisfying the following decay
\begin{equation}\label{eq:2.15}
\limsup_{t\rightarrow +\infty} |u(t,x)|_{\infty}=0.
\end{equation}

\end {corollary}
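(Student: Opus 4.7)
The plan is to argue by contradiction: if a global regular solution $(\rho,u)$ satisfying \eqref{eq:2.15} existed, I would reduce to Theorem \ref{th:2.20} by verifying that $(\rho,u)\in D(\infty)$ in the sense of Definition \ref{d2}. Observe that \eqref{canshu} together with \eqref{decaycanshu} implies \eqref{canshu-22}, so Theorem \ref{th:2.20} is indeed applicable once $D(\infty)$-membership is established. Moreover, because $1<\gamma\le 3/2<2$, the ``moreover'' clause of Theorem \ref{th2} automatically upgrades the regular solution to a classical one on $(0,T]\times\mathbb{R}^3$ for every finite $T$, so the notion of classical solution in Definition \ref{d2} is legitimate.

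The crux is the verification of finite total mass, i.e.\ condition (A) of Definition \ref{d2}. From $\rho^{\gamma-1}\in C([0,\infty);H^3)$ and the embedding $H^3(\mathbb{R}^3)\hookrightarrow L^\infty\cap L^2$, interpolation gives $\rho^{\gamma-1}(t,\cdot)\in L^q$ for every $q\in[2,\infty]$. The hypothesis $\gamma\le 3/2$ is precisely what makes the choice $q=1/(\gamma-1)\ge 2$ admissible, whereupon
$$
m(t)=\int \rho(t,x)\,\text{d}x=\int \bigl(\rho^{\gamma-1}(t,x)\bigr)^{1/(\gamma-1)}\,\text{d}x<\infty.
$$
Combined with the conservation of mass asserted in Remark \ref{r1} (via Lemma \ref{lemmak}), $m(t)=m(0)\in(0,\infty)$ for all $t\ge 0$, yielding (A). Similarly, the conservation of momentum in the same remark, together with the hypothesis $|\mathbb{P}(0)|>0$, gives $|\mathbb{P}(t)|=|\mathbb{P}(0)|\in(0,\infty)$, i.e.\ (B).

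For the kinetic energy condition (C), I would use the energy equality referenced in Remark \ref{r1} and the non-negativity of the viscous dissipation (ensured by $\alpha>0$ and $2\alpha+3\beta\ge 0$) to obtain $E_k(t)\le E(t)\le E(0)$; here $E(0)<\infty$ because $u_0\in H^3\hookrightarrow L^\infty$, $\rho_0\in L^1\cap L^\infty$ (as just derived), and $\rho_0^{\gamma}=\rho_0\cdot\rho_0^{\gamma-1}\in L^1$. The positivity comes from Cauchy--Schwarz applied componentwise,
$$
|\mathbb{P}(t)|^2\le m(t)\cdot 2E_k(t),
$$
which combined with (A) and (B) gives $E_k(t)\ge |\mathbb{P}(0)|^2/(2m(0))>0$. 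Hence $(\rho,u)\in D(\infty)$, and Theorem \ref{th:2.20} contradicts the assumed decay \eqref{eq:2.15}.

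The only step with real content is the finite-mass verification, and the embedding exponent $1/(\gamma-1)$ exceeds $2$ exactly when $\gamma\le 3/2$, which is the sole purpose of the restriction on $\gamma$ in \eqref{decaycanshu}. The constraint $\delta\ge\gamma-1$ should enter only through the proofs of mass/momentum conservation and the energy equality (ensuring adequate decay of boundary terms at infinity when one integrates the momentum and energy identities), which for this proof I would simply invoke from Remark \ref{r1}.
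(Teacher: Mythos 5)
Your proposal is correct and follows essentially the same route as the paper: reduce to Theorem \ref{th:2.20} by verifying membership in $D(\infty)$, with the key step being finiteness of the total mass via $\rho^{\gamma-1}\in H^3\hookrightarrow L^2\cap L^\infty$ and the exponent condition $1/(\gamma-1)\ge 2$ forced by $\gamma\le 3/2$, and the conservation of mass and momentum supplied by Lemma \ref{lemmak}. The only cosmetic difference is that the paper bounds $E_k(t)\le E(t)<\infty$ directly at each time from the pointwise estimate $\int\rho|u|^2\le|\rho|_\infty|u|_2^2$, whereas you route through the energy equality $E(t)\le E(0)$; both are valid.
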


Moreover,  according to  Theorem\ref{th:2.20}, Remark \ref{r1} and \cite{CK3, guahu, HX1}, for constant viscosity flow,  one can give the following example: the classical solution exists globally and keeps the conservation of total mass,  but can not keep the conservation of momentum for all the time $t\in (0,\infty)$.\begin{corollary}\label{th:2.20-HLX}
Let  $\delta=0$ in (\ref{fandan}).
For any given numbers $M>0$, $\iota \in (\frac{1}{2},1]$, and $\overline{\rho}>0$, suppose that the initial data $(\rho_0,u_0)$ satisfies
\begin{equation}\label{initialhxl}
\begin{split}
&E(0)<\infty,\quad u_0\in \dot{H}^\iota \cap D^1\cap D^3, \quad \rho^{\frac{1}{2}}_0\in H^1, \\
& (\rho_0,P(\rho_0))\in H^3, \quad  0\leq \inf \rho_0 \leq \sup \rho_0 \leq \overline{\rho},\quad \|u\|_{\dot{H}^\iota}\leq M,
\end{split}
\end{equation}
and the compatibility condition
$$
-\alpha \triangle u_0-(\alpha+\beta) \nabla \text{div}u_0+\nabla P(\rho_0)=\rho_0 g_4,
$$
for some $g_4\in D^1$ with $\rho^{\frac{1}{2}} g_4 \in L^2$. Then there exists a positive constant $\zeta$ depending on $\alpha$, $\beta$, $A$, $\gamma$, $\overline{\rho}$, $\iota$ and $M$ such that if
\begin{equation}\label{smallness}
E(0)\leq \zeta,
\end{equation}
the Cauchy problem  (\ref{eq:1.1})-(\ref{far}) has a unique global classical solution $(\rho,u)$ in $(0,\infty)\times \mathbb{R}^3$ satisfying, for any $0<\tau <T<\infty$,
\begin{align}
& E(t) \leq E(0), \ \  t   \in [0,\infty); \quad   0\leq \rho(t,x)\leq 2\overline{\rho}, \quad (t,x)   \in [0,\infty)\times \mathbb{R}^3;\label{lagrangian1q}\\[6pt]
&\rho^{\frac{1}{2}} \in C([0,T];H^1); \quad m(t)=m(0), \quad    t   \in [0,\infty);\label{lagrangian2q}\\[6pt]
&\sup_{t\in [0,T]}\big(\min\{1,t\}\big)^3  \int \rho |\dot{u}|^2\leq 2E(0)^{\frac{1}{2}};\quad   (\rho,P(\rho)) \in C([0,T];H^3);  \label{lagrangian3q}\\[6pt]
&  u\in C([0,T]; D^1\cap D^3)\cap L^2([0,T] ; D^4)\cap L^\infty([\tau,T];D^4); \label{lagrangian4q}\\[6pt]
&  u_t\in L^\infty([0,T]; D^1)\cap L^2([0,T] ; D^2)\cap L^\infty([\tau,T];D^2)\cap H^1([\tau,T];D^1),\label{lagrangian5q}
\end{align}
for $\dot{u}=u_t+u\cdot \nabla u$, and the following large-time behavior
\begin{equation}\label{largetimehlx}
\begin{split}
\lim_{t\rightarrow \infty} \int \big(|\rho|^b+\rho^{\frac{1}{2}}|u|^4+|\nabla u|^2\big)(t,x)\text{d}x=0,\quad \text{for \ any \ constant } \ \ b>\gamma.
\end{split}
\end{equation}

Furthermore, if  $m(0)>0$ and $|\mathbb{P}(0)|>0$, then the solution obtained above can not keep the conservation of the momentum for all the time $t\in (0,\infty)$.
\end {corollary}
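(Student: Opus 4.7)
The corollary has two essentially independent parts: (i) the global well-posedness and large-time decay statements \eqref{lagrangian1q}--\eqref{largetimehlx}, and (ii) the impossibility of conserving momentum once $m(0)>0$ and $|\mathbb{P}(0)|>0$. Only (ii) uses the machinery of the present paper; the plan for (i) is to invoke and assemble existing results.

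For (i), I would combine the local well-posedness of classical solutions with vacuum from Cho-Choe-Kim \cite{CK3}, which handles exactly the compatibility condition involving $g_4$, with the global-in-time a priori estimates of Huang-Li-Xin \cite{HX1} valid under the smallness condition \eqref{smallness}, supplemented by the refinements of \cite{guahu} that supply the $\dot H^\iota$-type control ($\iota\in(\tfrac12,1]$) responsible for the strong decay \eqref{largetimehlx}. The hypotheses in \eqref{initialhxl}, in particular $\rho_0^{1/2}\in H^1$, $(\rho_0,P(\rho_0))\in H^3$, $u_0\in\dot H^\iota\cap D^1\cap D^3$ and $\sup\rho_0\leq\overline{\rho}$, are tailored to match the data class of those works, so that \eqref{lagrangian1q}--\eqref{lagrangian5q} and the decay \eqref{largetimehlx} can be read off directly from their a priori estimates.

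For (ii), the strategy is a direct contradiction with Theorem \ref{th:2.20}. Suppose, to the contrary, that $|\mathbb{P}(t)|=|\mathbb{P}(0)|>0$ for every $t\in(0,\infty)$. Combined with the mass conservation $m(t)=m(0)>0$ from \eqref{lagrangian2q} and the energy bound $0<E_k(t)\leq E(t)\leq E(0)<\infty$ from \eqref{lagrangian1q} (positivity of $E_k$ follows from the Cauchy--Schwarz inequality $|\mathbb{P}|^2\leq 2m\,E_k$), this places $(\rho,u)$ in the class $D(\infty)$ of Definition \ref{d2}. Theorem \ref{th:2.20} then forbids $\limsup_{t\to\infty}|u(t,\cdot)|_\infty=0$, so to close the argument it is enough to exhibit this $L^\infty$-decay. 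For the latter I would combine the $\dot H^1$-decay $|\nabla u(t,\cdot)|_2\to 0$ from \eqref{largetimehlx} with a uniform-in-time bound on $|\nabla^2 u(t,\cdot)|_2$ for $t\geq 1$: rewriting the momentum equation as
\begin{equation*}
\alpha\Delta u+(\alpha+\beta)\nabla\,\mathrm{div}\, u=\rho\dot u+\nabla P(\rho),
\end{equation*}
Lam\'e elliptic regularity in $\mathbb{R}^3$ reduces this to a uniform control of $|\rho\dot u|_2$ and $|\nabla P(\rho)|_2$; the first is bounded by $|\rho^{1/2}|_\infty\cdot|\rho^{1/2}\dot u|_2$, which is uniform for $t\geq 1$ by \eqref{lagrangian3q} and \eqref{lagrangian1q}, and the second is controlled uniformly within the small-energy framework of \cite{HX1}. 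The Gagliardo-Nirenberg interpolation
\begin{equation*}
|u(t,\cdot)|_\infty\leq C|\nabla u(t,\cdot)|_2^{1/2}|\nabla^2 u(t,\cdot)|_2^{1/2}
\end{equation*}
then yields $|u(t,\cdot)|_\infty\to 0$, contradicting Theorem \ref{th:2.20}.

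The main obstacle is the uniform-in-$t$ bound on $|\nabla^2 u(t,\cdot)|_2$ for $t\geq 1$: the regularity in \eqref{lagrangian4q}--\eqref{lagrangian5q} is only $T$-local, so this bound has to be extracted from the global small-energy a priori estimates of \cite{HX1}, which in turn requires propagating a uniform-in-time $L^2$-control of $\nabla\rho$ so that $|\nabla P(\rho)|_2$ does not grow. Once that technical step is in place, passing from $\dot H^1$-decay of $u$ to $L^\infty$-decay and then contradicting Theorem \ref{th:2.20} is straightforward.
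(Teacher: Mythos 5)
Your structural plan is correct—both parts are handled as you describe, and your contradiction argument in part (ii) matches the paper's logic: assuming conservation of momentum together with conservation of positive mass and finite kinetic energy places $(\rho,u)$ in $D(\infty)$, so $L^\infty$-decay of $u$ contradicts Theorem \ref{th:2.20}. Part (i) is indeed a citation of \cite{CK3,guahu,HX1} plus the short extra verification that $\rho^{1/2}\in C([0,T];H^1)$ (hence $m(t)=m(0)$ since $\rho u\in W^{1,1}$), which you did not mention but is straightforward.

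The genuine gap is in your route to the $L^\infty$-decay of $u$. You interpolate $|u|_\infty\leq C|\nabla u|_2^{1/2}|\nabla^2 u|_2^{1/2}$ and then try to bound $|\nabla^2 u|_2$ uniformly in time via Lam\'e elliptic regularity, which forces you to control $|\nabla P(\rho)|_2\sim|\nabla\rho|_2$ uniformly in $t$. You flag this yourself as ``the main obstacle'' and propose extracting it from \cite{HX1}, but this is exactly the point where the argument breaks: \cite{HX1} does \emph{not} provide a time-uniform bound on $\|\nabla\rho\|_{L^2}$; their higher-order density estimates grow (the gradient is controlled on $[0,T]$ for each finite $T$ via a log-Gronwall argument, but can grow in time). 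The paper circumvents this by a different decomposition. It uses the Gagliardo--Nirenberg form $|u|_\infty\leq C|u|_6^{1/2}|\nabla u|_6^{1/2}\leq C|\nabla u|_2^{1/2}|\nabla u|_6^{1/2}$ and then estimates $|\nabla u|_6$ through the effective viscous flux $F=(2\alpha+\beta)\mathrm{div}\,u-P(\rho)$ and vorticity $\omega=\nabla\times u$: by Lemma 2.3 of \cite{HX1},
\begin{equation*}
|\nabla u|_6\leq C\big(|F|_6+|\omega|_6+|P|_6\big)\leq C\big(|\rho\dot u|_2+|P|_6\big).
\end{equation*}
Here $|\rho\dot u|_2\leq(2\overline{\rho})^{1/2}|\rho^{1/2}\dot u|_2$ is uniformly bounded for $t\geq1$ by \eqref{lagrangian3q} and \eqref{lagrangian1q}, $|\nabla u|_2\to0$ by \eqref{largetimehlx}, and $|P|_6^6=A^6\int\rho^{6\gamma}\to0$ by \eqref{largetimehlx} applied with $b=6\gamma>\gamma$. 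Thus the paper's estimate only invokes quantities that are actually part of the corollary's stated conclusions, whereas your chain requires a time-uniform estimate that is not available. To repair your argument, replace your interpolation plus elliptic step with the effective-viscous-flux/vorticity decomposition above.
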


\begin{remark}\label{commentoniinitial}
Note that the classes of initial data for Corollary 1.2 is a sub-class of initial data for the global well-posedness theory in \cite{HX1}. Indeed, compared with the assumptions on initial data in \cite{HX1}, additional conditions,
$$
\rho^{\frac{1}{2}}_0\in H^1, \quad m(0)>0, \quad \text{and}  \quad |\mathbb{P}(0)|>0,$$
are required here to keep  the conservation of positive total  mass, and satisfy the  assumptions of Theorem \ref{th:2.20}.

\end{remark}

\begin{remark}\label{zhunbei2}
Note that for the regular solution $(\rho,u)$ obtained in Theorems \ref{th2}-\ref{th2-1}, $u$  stays in the inhomogeneous Sobolev space $H^3$ instead of the homogenous one $D^1\cap D^3$ in \cite{CK3} for  constant viscous flows.

 It is also worth pointing out  that recently,  if the initial density is compactly supported,  Li-Wang-Xin \cite{ins} prove that  classical solutions with finite energy to the Cauchy problem of the compressible Navier-Stokes systems with constant viscosities do not exist in general in   inhomogeneous Sobolev space for any short time, which indicates in particular that the homogeneous Sobolev space is crucial as studying the well-posedness (even locally in time) for the Cauchy problem of the compressible Navier-Stokes systems in the presence of such kind of the vacuum.

 Based on the conclusion obtained in Theorems \ref{th2}-\ref{th2-1} and \cite{ins}, first there is a natural question  that whether one can obtain the local-in-time existence  of the classical solutions with finite energy in   inhomogeneous Sobolev space  to the Cauchy problem of the compressible Navier-Stokes systems with constant viscosities   under the assumption that the initial density is positive but decays to zero in the far field, or not.   Second, can  the conclusion obtained in \cite{ins}  be applied to the  degenerate  system considered here?
Due to the obvious difference on the structure between the constant viscous flows and degenerate viscous flows,   such  questions are not easy and  will be discussed in our future work Xin-Zhu \cite{XZ-L2}.

\end{remark}

\begin{remark}\label{additionalinformation}
Under the assumption $\inf \rho_0=0$,  in Li-Xin \cite{lz}, the  global existence of weak solutions  to the Cauchy problem (\ref{eq:1.1})-(\ref{far}) has been established for the cases
$ 3/4<\delta< 1, \ 1<\gamma <6\delta-3$
in the  three dimensional space, and
$1/2 <\delta<1,\ \gamma>1$
in the two dimensional space.

\end{remark}

We now outline the organization of the rest of this paper. In Section $2$, we list some basic lemmas  to be used later.

Section $3$ is devoted to proving Theorem \ref{th2}.  First,    in order to analyze  the behavior of the possible singular term $\nabla \rho^{\delta-1}$ clearly,   we enlarge the original problem (\ref{eq:1.1})-(\ref{far}) into (\ref{eq:cccq})-(\ref{sfanb1}) in terms of the following  variables
$$
\phi=\frac{A\gamma}{\gamma-1} \rho^{\gamma-1},\quad \psi=\frac{\delta}{\delta-1}\nabla \rho^{\delta-1}=\frac{\delta}{\delta-1}\Big(\frac{A\gamma}{\gamma-1}\Big)^{\frac{1-\delta}{\gamma-1}}\nabla \phi^{\frac{\delta-1}{\gamma-1}}=(\psi^{(1)},\psi^{(2)},\psi^{(3)}), \quad u
$$
 in \S 3.1. Then  the behavior of the velocity $u$ can be controlled by the following equations:
$$
u_t+u\cdot\nabla u +\nabla \phi+a\phi^{2e}Lu=\psi \cdot Q(u),
$$
where $
 a=\Big(\frac{A\gamma}{\gamma-1}\Big)^{-2e},\quad \text{and} \quad e=\frac{\delta-1}{2(\gamma-1)}<0$. Due to the fact that $\phi^{2e}$  has an  uniformly positive lower bound in the whole space, then
for this  special quasi-linear parabolic system, one can find formally that, even though the coefficients $a\phi^{2e}$ in front  of  Lam\'e operator $ L$ will tend to $\infty$ as $\rho\rightarrow 0$ in the far filed, yet this structure could give a better a priori estimate on $u$ in $H^3$ than those of \cite{CK3,sz3,sz333,zz}  if one can control  the possible singular term  $\psi$ in $D^1\cap D^2$. According to $(\ref{eq:1.1})_2$ and (\ref{kuzxc}), $\psi$ is governed by
\begin{equation}\label{psi-1}
\psi_t+\sum_{l=1}^3 A_l(u) \partial_l\psi+B(u)\psi+\delta a\phi^{2e}\nabla \text{div} u=0,
\end{equation}
where  the definitions of $A_l$ ($l=1,2,3$) and  $B$  can be found in \S 3.2.1. This implies that actually the subtle source term $\psi$  could be controlled by a     symmetric hyperbolic  system with a possible singular higher order term $\delta a\phi^{2e}\nabla \text{div} u$. In order to close the estimates, we need to control $\phi^{2e}\nabla \text{div} u$ in $D^1\cap D^2$, which  can be obtained by regarding the momentum equations as the following inhomogeneous Lam\'e  equations:
$$
a L(\phi^{2e}u)=-u_t-u\cdot\nabla u -\nabla \phi+\psi\cdot Q(u) -\frac{\delta-1}{\delta}G(\psi,u)=W,$$
with
$$
G(\psi,u)= \alpha  \psi\cdot \nabla u+\alpha \text{div}(u\otimes \psi)+(\alpha+\beta)\big(\psi \text{div}u+\psi \cdot (\nabla u)+u  \cdot \nabla \psi\big).
$$
In fact, one has
\begin{equation}\label{singularelliptic}
\begin{split}
|\phi^{2e}\nabla^2u|_{D^1}\leq & C(|\psi|_\infty|\nabla^2u|_2+|\phi^{2e}\nabla^3u|_2)\\
\leq & C(|\psi|_\infty|\nabla^2u|_2+|\nabla \psi|_3|\nabla u|_6+|\nabla^2\psi|_2|u|_\infty+|W|_{D^1}),
\end{split}
\end{equation}
for some constant  $C>0$ independent of the lower bound of the density provided that
$$
\phi^{2e}u\rightarrow 0\qquad \text{as} \qquad |x| \rightarrow +\infty,
$$
which can be  verified  in  an approximation process from non-vacuum flows  to the flow with   far field vacuum. Similar calculations can be done for  $|\phi^{2e}\nabla^2u|_{D^2}$.  Thus, it seems that at least, the reformulated system (\ref{eq:cccq}):
\begin{equation*}
\begin{cases}
\displaystyle
\phi_t+u\cdot \nabla \phi+(\gamma-1)\phi \text{div} u=0,\\[12pt]
\displaystyle
u_t+u\cdot\nabla u +\nabla \phi+a\phi^{2e}Lu=\psi \cdot Q(u),\\[12pt]
\displaystyle
\psi_t+\nabla (u\cdot \psi)+(\delta-1)\psi\text{div} u +\delta a\phi^{2e}\nabla \text{div} u=0,
 \end{cases}
\end{equation*}
can provide a closed a priori estimates for the desired regular solution. Next we need to find a proper linear scheme to verify the energy estimate strategy discussed above.

  Second, in \S 3.2,   we give  an  elaborate linearization (\ref{li4})  of the nonlinear one (\ref{eq:cccq})-(\ref{sfanb1}) based on a careful analysis on the structure of the nonlinear equations (\ref{eq:cccq}), and the  global approximate solutions for this linearized problem when $\phi(0,x)=\phi_0$ has positive lower bound $\eta$ are established.  The choice of the linear scheme for this problem needs to be  careful due to the appearance of the far field vacuum.
Some  necessary structures should be preserved  in order  to establish the desired a priori estimates according to the strategy mentioned in the above paragraph.  For the  problem (\ref{eq:cccq}), a crucial point is how to deal with the estimates on $\psi$. According to the analysis in the above paragraph, we need to keep the two factors $\phi^{2e}$  and $\nabla \text{div} u$ of the source term $\delta a\phi^{2e}\nabla \text{div} u$  in equations (\ref{psi-1}) in the same step. Then let $v=(v^{(1)},v^{(2)}, v^{(3)})\in \mathbb{R}^3$ be  a  known vector and $g$ be a known real (scalar) function satisfying $(v(0,x), g(0,x))=(u_0, (\phi_0)^{2e})$ and (\ref{vg}). It seems that one should consider the following linear equations:
  \begin{equation}
\begin{cases}
\label{eq:cccq-fenxi}
\displaystyle
\phi_t+v\cdot \nabla \phi+(\gamma-1)\phi \text{div} v=0,\\[12pt]
\displaystyle
u_t+v\cdot\nabla v +\nabla \phi+a\phi^{2e}Lu=\psi \cdot Q(v),\\[12pt]
\displaystyle
\psi_t+\sum_{l=1}^3 A_l(v) \partial_l\psi+B(v)\psi+\delta a g \nabla \text{div} v=0.
 \end{cases}
\end{equation}
 However,  it should be pointed out that, in (\ref{eq:cccq-fenxi}),   the relationship
 $$
 \psi=\frac{a\delta}{\delta-1}\nabla \phi^{2e}
  $$
  between $\psi$ and $\phi$ has been destroyed due to the term  $g \nabla \text{div} v$ in   $(\ref{eq:cccq-fenxi})_3$. Then the estimates for the linear scheme (\ref{eq:cccq-fenxi}) encounter an obvious difficulty when one considers the   $L^2$ estimate on $u$:
  \begin{equation}\label{relationvainish}
  \begin{split}
&\frac{1}{2} \frac{d}{dt}|u|^2_2+a\alpha |\phi^{e}\nabla u|^2_2+a(\alpha+\beta)|\phi^{e}\text{div} u|^2_2\\
=&-\int \big(v\cdot \nabla v +\nabla \phi  +a  \underbrace{\nabla \phi^{2e}}_{\neq \psi}\cdot Q(u) -\psi \cdot Q(v) \big)\cdot u.
  \end{split}
  \end{equation}
The factor $\nabla \phi^{2e}$ does not coincide with $\frac{\delta-1}{a\delta}\psi$ in this linear scheme, which means that there is no way to control the term $a  \nabla \phi^{2e}\cdot Q(u)$ in the above energy estimates.
In order to overcome this difficulty, in  (\ref{li4}),   we  first linearize the equation of $h=\phi^{2e}$ as:
\begin{equation}\label{h}h_t+v\cdot \nabla h+(\delta-1)g\text{div} v=0,
\end{equation}
 and then use $h$ to define $\psi=\frac{a\delta}{\delta-1}\nabla h$ again. The linearized equations for $u$ are chosen as
$$
u_t+v\cdot\nabla v +\nabla \phi+a \sqrt{h^2+\epsilon^2} Lu=\psi \cdot Q(v)
$$
for any positive constant $\epsilon >0$. Here the appearance of $\epsilon$ is used to compensate the lack of lower bound of $h$. It follows from (\ref{h}) and the relation  $\psi=\frac{a\delta}{\delta-1}\nabla h$ that
$$
\psi_t+\sum_{l=1}^3 A_l(v) \partial_l\psi+(\nabla v)^\top\psi+a\delta \big(g\nabla \text{div} v+\nabla g \text{div} v\big)=0,
$$
which  turns out to be enough to get desired estimates on $\psi$.

In \S 3.3,    the a priori estimates independent of the lower bound $(\epsilon,\eta)$ of the solutions $(\phi^{\epsilon,\eta},h^{\epsilon,\eta},\psi^{\epsilon,\eta},u^{\epsilon,\eta})$  to  the linearized problem   (\ref{li4}) are established. In order to deal with the limit process from our linear problem to the nonlinear one, we need some uniform estimates on the following new quantities:
$$
 \varphi=h^{-1},\quad f=\psi \varphi =\frac{a\delta}{\delta-1}\nabla h/h=(f^{(1)},f^{(2)},f^{(3)}).$$
An observation used in this subsection is that the initial assumption (\ref{th78qq})  implies that
$$
\varphi(0,x) \in L^\infty\cap D^{1,6} \cap D^{2,3} \cap D^3,\ \  f(0,x) \in L^\infty\cap L^6\cap  D^{1,3} \cap D^2.
$$
According to the definitions of $(\varphi,f)$ and the equation of $h$,  one can also show that $(\varphi,f)$ can be controlled by some hyperbolic equations without degenerate or singular source terms.
Based on these facts, we can obtain some uniform estimates on $\varphi$ and $f$, which are sufficient for the strong compactness argument  used  in \S 3.5.
   In \S 3.4,  one obtains the uniformly local-in-time well-posedness  of the  linearized problem when $\phi_0>\eta>0$ but without the artificial viscosity by passing to the limit $\epsilon \rightarrow 0$.

In \S 3.5, based on the above analysis for the choice of the linearization  and a new formulation of our problem (see (\ref{li6zx})),   the unique solvability of the classical solution away from vacuum  to  the nonlinear reformulated problem (\ref{eq:cccq})-(\ref{sfanb1}) through an iteration process  is given, whose life span is uniformly positive with respect to the lower bound $\eta$ of $\phi_0$.
Actually, the behavior of $u$ in this subsection is controlled by the following linear equations:
$$
\varphi(u_t+v\cdot \nabla v+\nabla\phi)+aLu=f \cdot  Q(v),$$
which is also a special quasi-linear parabolic system with some possible singular source terms $f\cdot  Q(u)$ near the  vacuum, and  the coefficients $\varphi$ in front  of the time evolution operator $\dot{u}=u_t+u\cdot \nabla u $ will tend to $0$ as $\varphi\rightarrow 0$ in the far filed. However, based on the uniform estimates on $(\phi,u,h,\psi)$ and $(\varphi,f)$ established in \S 3.4, this structure  can avoid some difficulties on the strong convergence of terms such as
$
g\nabla \text{div} v,
$
and $hLu$
in the linear scheme  (\ref{li4}). The details can be found in the proof of this subsection.

Based on the conclusions of \S 3.5, one can   recover the solution of the  nonlinear reformulated problem allowing vacuum in the far field by passing to the limit as $\eta \rightarrow 0$ in  \S 3.6.
Then  in \S 3.7,  one can show that the existence result for the reformulated problem indeed implies Theorem \ref{th2}. Theorem \ref{th2-1} can be proved by a similar argument as used in Theorem \ref{th2}.

 Finally, Section $4$ is devoted to the proof of the non-existence theories of global regular solutions with $L^\infty$ decay on $u$ shown in Theorem \ref{th:2.20} and Corollaries \ref{th:2.20-c}-\ref{th:2.20-HLX}.
Furthermore,  it should be pointed out that our framework  in this paper can be applied   to other physical dimensions, say 1 and 2, with some minor modifications.

\section{Preliminaries}

In this section, we list  some basic lemmas  to be used later.
The first one is the  well-known Gagliardo-Nirenberg inequality.
\begin{lemma}\cite{oar}\label{lem2as}\
For $p\in [2,6]$, $q\in (1,\infty)$, and $r\in (3,\infty)$, there exists some generic constant $C> 0$ that may depend on $q$ and $r$ such that for
$$f\in H^1(\mathbb{R}^3),\quad \text{and} \quad  g\in L^q(\mathbb{R}^3)\cap D^{1,r}(\mathbb{R}^3),$$
it holds that
\begin{equation}\label{33}
\begin{split}
&|f|^p_p \leq C |f|^{(6-p)/2}_2 |\nabla f|^{(3p-6)/2}_2,\quad |g|_\infty\leq C |g|^{q(r-3)/(3r+q(r-3))}_q |\nabla g|^{3r/(3r+q(r-3))}_r.
\end{split}
\end{equation}
\end{lemma}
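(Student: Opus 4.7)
Both bounds in the lemma are classical Gagliardo--Nirenberg estimates on $\mathbb{R}^3$, so the proof will reduce to two standard ingredients: (i) the critical Sobolev embedding $\dot H^1(\mathbb{R}^3)\hookrightarrow L^6(\mathbb{R}^3)$, and (ii) a Morrey-type mean-value/local-average argument. A scaling check fixes the exponents: writing $f_\lambda(x)=f(\lambda x)$ one verifies that each of the two candidate inequalities is invariant, which is how the formulas for the powers $(6-p)/2$, $(3p-6)/2$ and $3r/(3r+q(r-3))$ are forced.

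For the first inequality, the plan is short. Given $p\in[2,6]$, write $1/p=\theta/2+(1-\theta)/6$ with $\theta=(6-p)/(2p)$. Hölder interpolation between $L^2$ and $L^6$ yields
\begin{equation*}
|f|_p\le |f|_2^{\theta}\,|f|_6^{1-\theta},
\end{equation*}
and then the critical Sobolev embedding provides $|f|_6\le C|\nabla f|_2$ (proved, e.g., by the classical Gagliardo/Nirenberg/Sobolev argument via the $L^1$-chain of integrals along coordinate lines). Raising to the $p$-th power gives the stated estimate $|f|_p^p\le C|f|_2^{(6-p)/2}|\nabla f|_2^{(3p-6)/2}$.

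For the second inequality, the plan is to exploit Morrey's embedding in a quantitative, scale-optimized form. Fix $x\in\mathbb{R}^3$ and $R>0$ and split
\begin{equation*}
|g(x)|\le \frac{1}{|B_R(x)|}\int_{B_R(x)}|g(y)|\,\dif y\;+\;\frac{1}{|B_R(x)|}\int_{B_R(x)}|g(x)-g(y)|\,\dif y.
\end{equation*}
Hölder's inequality in $L^q$ bounds the first term by $C R^{-3/q}|g|_q$. For the second, represent $g(x)-g(y)=\int_0^1\nabla g(x+t(y-x))\cdot(y-x)\,\dif t$, invoke Fubini, and apply Hölder in $L^r$ (using $r>3$, which is precisely what forces integrability of the resulting radial weight); this yields the bound $C R^{1-3/r}|\nabla g|_r$. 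Choosing $R$ so that the two contributions are equal, i.e. $R=(|g|_q/|\nabla g|_r)^{1/(1+3/q-3/r)}$, and simplifying the resulting exponent gives exactly $\theta=3r/(3r+q(r-3))$ and $1-\theta=q(r-3)/(3r+q(r-3))$.

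There is no genuine obstacle beyond verifying that the Morrey-type integral $R^{-3}\int_{B_R}|x-y|^{1-3}\dif y$ is controlled (which uses $r>3$) and that the arithmetic of the optimal $R$ produces the precise exponents in the statement. The constant $C$ is allowed to depend on $q$ and $r$ as the exponents do. An approximation argument (mollify and truncate) extends both inequalities from Schwartz functions to the stated classes $H^1(\mathbb{R}^3)$ and $L^q\cap D^{1,r}(\mathbb{R}^3)$.
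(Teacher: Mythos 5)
Your proof is correct in all details: the interpolation $1/p=\theta/2+(1-\theta)/6$ combined with $\dot H^1(\mathbb{R}^3)\hookrightarrow L^6$ gives the first bound, and the Morrey-type splitting with an optimized radius $R$ gives exactly the stated exponents for the second. The paper does not give a proof of this lemma at all — it is simply cited as a standard Gagliardo--Nirenberg result from Ladyzenskaja--Ural'ceva — so your self-contained derivation is a correct and standard way to establish it, and you may compare it against any textbook treatment (e.g.\ the chain-of-one-dimensional-integrals argument for $L^6\hookleftarrow \dot H^1$, and the Campanato/Morrey argument for the sup bound) rather than against the paper.
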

Some special cases of this inequality are
\begin{equation}\label{ine}\begin{split}
|u|_6\leq C|u|_{D^1},\quad |u|_{\infty}\leq C|u|^{\frac{1}{2}}_6|\nabla u|^{\frac{1}{2}}_{6}, \quad |u|_{\infty}\leq C\|u\|_{W^{1,r}}.
\end{split}
\end{equation}

The second lemma gives some compactness results obtained via the Aubin-Lions Lemma.
\begin{lemma}\cite{jm}\label{aubin} Let $X_0\subset X\subset X_1$ be three Banach spaces.  Suppose that $X_0$ is compactly embedded in $X$ and $X$ is continuously embedded in $X_1$. Then the following statements hold.

\begin{enumerate}
\item[i)] If $J$ is bounded in $L^p([0,T];X_0)$ for $1\leq p < +\infty$, and $\frac{\partial J}{\partial t}$ is bounded in $L^1([0,T];X_1)$, then $J$ is relatively compact in $L^p([0,T];X)$;\\

\item[ii)] If $J$ is bounded in $L^\infty([0,T];X_0)$  and $\frac{\partial J}{\partial t}$ is bounded in $L^p([0,T];X_1)$ for $p>1$, then $J$ is relatively compact in $C([0,T];X)$.
\end{enumerate}
\end{lemma}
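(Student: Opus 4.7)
The plan is to reduce both statements to standard compactness criteria in Bochner spaces (Riesz--Fréchet--Kolmogorov for $L^p$, Arzelà--Ascoli for $C$), with the crucial intermediate tool being an Ehrling-type interpolation inequality. The first step is to establish that, since $X_0 \hookrightarrow X$ is compact and $X \hookrightarrow X_1$ is continuous, for every $\varepsilon>0$ there exists $C_\varepsilon>0$ with
\[
\|u\|_X \leq \varepsilon \|u\|_{X_0} + C_\varepsilon \|u\|_{X_1}, \qquad u\in X_0.
\]
This follows from a short contradiction argument: if it failed, one would obtain a sequence $(u_n)\subset X_0$ with $\|u_n\|_X=1$, $\|u_n\|_{X_0}$ bounded, and $\|u_n\|_{X_1}\to 0$; compactness of $X_0\hookrightarrow X$ yields a subsequence convergent in $X$ to a limit of unit $X$-norm, yet this limit must vanish in $X_1$, contradicting continuity of $X\hookrightarrow X_1$.

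For part (i), I would verify the Riesz--Fréchet--Kolmogorov criterion in $L^p([0,T];X)$: uniform boundedness and uniform $L^p$-equicontinuity of time-translations. Boundedness follows from the $L^p([0,T];X_0)$ bound and $X_0\hookrightarrow X$. For equicontinuity, for $h>0$ I would split
\[
\|J(\cdot+h)-J\|_{L^p([0,T-h];X)} \leq \varepsilon \|J(\cdot+h)-J\|_{L^p([0,T-h];X_0)} + C_\varepsilon \|J(\cdot+h)-J\|_{L^p([0,T-h];X_1)}.
\]
The first term is bounded by $2\varepsilon$ times the uniform $L^p([0,T];X_0)$-bound. The second is handled by writing $J(t+h)-J(t)=\int_t^{t+h}\partial_s J(s)\,ds$ in the Bochner sense and invoking absolute continuity of the integral together with the $L^1([0,T];X_1)$-bound on $\partial_t J$, which forces the term to $0$ uniformly as $h\downarrow 0$. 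Choosing $\varepsilon$ small first and then $h$ small closes the equicontinuity estimate.

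For part (ii), I would apply Arzelà--Ascoli in $C([0,T];X)$. Pointwise precompactness of $\{J(t)\}$ in $X$ at each fixed $t$ is immediate from the uniform $L^\infty([0,T];X_0)$-bound combined with the compact embedding $X_0 \hookrightarrow X$. For equicontinuity in $t$, Hölder's inequality in time gives
\[
\|J(t+h)-J(t)\|_{X_1} \leq h^{1-1/p}\,\|\partial_t J\|_{L^p([0,T];X_1)},
\]
so the Ehrling inequality, together with the $L^\infty([0,T];X_0)$-bound $M$ and the $L^p([0,T];X_1)$-bound $M'$ on $\partial_t J$, produces
\[
\|J(t+h)-J(t)\|_X \leq 2\varepsilon M + C_\varepsilon\, h^{1-1/p} M'
\]
uniformly in $t\in[0,T-h]$ and in the family; choosing $\varepsilon$ small and then $h$ small makes the right side arbitrarily small, which is exactly the uniform equicontinuity required by Arzelà--Ascoli.

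The main technical obstacle is really just the Ehrling lemma above: once the interpolation inequality is available, the structure of both arguments is entirely dictated by the classical compactness theorems, and the time-integration identity $J(t+h)-J(t)=\int_t^{t+h}\partial_sJ\,ds$ in the vector-valued sense is the only other ingredient that needs a careful justification. The exponent condition $p>1$ in (ii) is essential and enters precisely through Hölder's inequality; the endpoint $p=1$ would only yield an $L^\infty$-in-time statement, not the continuity asserted in (ii).
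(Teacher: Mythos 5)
The paper does not prove this lemma; it is cited from Simon \cite{jm}, and your sketch is essentially Simon's route (Ehrling interpolation followed by compactness criteria in Bochner spaces). The Ehrling derivation is correct, part (ii) is structurally sound, and you correctly identify where the hypothesis $p>1$ enters.

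There is, however, a gap in part (i). For subsets of the Bochner space $L^p([0,T];X)$ with $X$ infinite-dimensional, uniform boundedness together with uniform $L^p$-equicontinuity of time-translations is \emph{not} a sufficient criterion for relative compactness: the family $J_n(t)\equiv e_n$, with $(e_n)$ an orthonormal sequence in a Hilbert space $X$, is bounded and literally translation-invariant, hence trivially equicontinuous, yet has no convergent subsequence. The vector-valued Kolmogorov--Riesz theorem used here (Theorem~1 of \cite{jm}) requires, in addition to equicontinuity, a spatial compactness condition: the sets of time-averages $\{\int_{t_1}^{t_2}J\,\text{d}t : J\in F\}$ must be relatively compact in $X$ for each $0<t_1<t_2<T$. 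In your setting this is supplied by the $L^p([0,T];X_0)$-bound, which bounds these averages uniformly in $X_0$, combined with the compactness of $X_0\hookrightarrow X$; but this verification has to be added, since it is precisely where the compact embedding $X_0\hookrightarrow X$ does the spatial work. A secondary imprecision: for the translate-in-$X_1$ term with $p>1$, ``absolute continuity of the integral'' does not give a rate uniform over the family from a mere $L^1$-bound on $\partial_t J$ (consider $g_n=n\chi_{[0,1/n]}$, bounded in $L^1$ with $\int_0^{1/n}g_n=1$). The estimate that closes the argument is
\[
\int_0^{T-h}\Big(\int_t^{t+h}\|\partial_s J\|_{X_1}\,\text{d}s\Big)^p \text{d}t
\le
\Big(\sup_t\int_t^{t+h}\|\partial_s J\|_{X_1}\,\text{d}s\Big)^{p-1}\!\int_0^{T-h}\!\!\int_t^{t+h}\|\partial_s J\|_{X_1}\,\text{d}s\,\text{d}t
\le h\,\|\partial_t J\|_{L^1([0,T];X_1)}^{\,p},
\]
which yields $\|\tau_hJ-J\|_{L^p([0,T-h];X_1)}\le h^{1/p}\,\|\partial_tJ\|_{L^1([0,T];X_1)}$ and hence the required uniform decay.
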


The third  one  can be found in Majda \cite{amj}.
\begin{lemma}\cite{amj}\label{zhen1}
Let  $r$, $a$ and $b$  be constants such that
$$\frac{1}{r}=\frac{1}{a}+\frac{1}{b},\quad \text{and} \quad 1\leq a,\ b, \ r\leq \infty.$$  $ \forall s\geq 1$, if $f, g \in W^{s,a} \cap  W^{s,b}(\mathbb{R}^3)$, then it holds that
\begin{equation}\begin{split}\label{ku11}
&|\nabla^s(fg)-f \nabla^s g|_r\leq C_s\big(|\nabla f|_a |\nabla^{s-1}g|_b+|\nabla^s f|_b|g|_a\big),\\
\end{split}
\end{equation}
\begin{equation}\begin{split}\label{ku22}
&|\nabla^s(fg)-f \nabla^s g|_r\leq C_s\big(|\nabla f|_a |\nabla^{s-1}g|_b+|\nabla^s f|_a|g|_b\big),
\end{split}
\end{equation}
where $C_s> 0$ is a constant depending only on $s$, and $\nabla^s f$ ($s>1$) is the set of  all $\partial^\zeta_x f$  with $|\zeta|=s$. Here $\zeta=(\zeta_1,\zeta_2,\zeta_3)\in \mathbb{R}^3$ is a multi-index.
\end{lemma}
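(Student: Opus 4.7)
The plan is to derive both commutator estimates from the multi-index Leibniz rule combined with H\"older's inequality and Gagliardo--Nirenberg interpolation. First I would fix a multi-index $\zeta$ with $|\zeta|=s$ and expand
\begin{equation*}
\partial^\zeta(fg) - f\,\partial^\zeta g = \sum_{\substack{\zeta_1+\zeta_2=\zeta\\ |\zeta_1|\ge 1}} \binom{\zeta}{\zeta_1}\, \partial^{\zeta_1} f \cdot \partial^{\zeta_2} g,
\end{equation*}
so that every surviving term carries at least one derivative on $f$, matching the structure of the right-hand sides of (\ref{ku11}) and (\ref{ku22}).

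For the two endpoint orders $|\zeta_1|=1$ and $|\zeta_1|=s$, the desired bounds follow directly from H\"older's inequality with the split $1/r=1/a+1/b$: the first case yields $|\nabla f|_a|\nabla^{s-1}g|_b$, while the second case yields either $|\nabla^s f|_b|g|_a$ (for (\ref{ku11})) or $|\nabla^s f|_a|g|_b$ (for (\ref{ku22})) by placing the $a$-exponent on the appropriate factor. The real work is in the intermediate range $2\le k\le s-1$, where I would apply H\"older with an auxiliary pair $(p_k,q_k)$ satisfying $1/p_k+1/q_k=1/r$, chosen so that Gagliardo--Nirenberg delivers
\begin{equation*}
|\nabla^k f|_{p_k}\le C\,|\nabla f|_a^{1-\theta_k}|\nabla^s f|_b^{\theta_k},\qquad |\nabla^{s-k}g|_{q_k}\le C\,|g|_a^{\theta_k}|\nabla^{s-1}g|_b^{1-\theta_k},
\end{equation*}
with $\theta_k=(k-1)/(s-1)$. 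Multiplying these bounds and invoking Young's inequality controls each intermediate term by the sum of the two products on the right-hand side of (\ref{ku11}); an analogous assignment with $a$ and $b$ interchanged on the $g$ factor produces (\ref{ku22}).

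The hard part will be verifying that the auxiliary exponents $(p_k,q_k)$ lie in the admissible Gagliardo--Nirenberg range $[1,\infty]$ uniformly across all $a,b,r$ satisfying the hypothesis, and separately handling the endpoint cases $a=\infty$ or $b=\infty$, in which the interpolation step must be replaced by a direct $L^\infty$ bound on the low-order factor together with a fractional-integration argument on the high-order one. Once this book-keeping is settled, summing over the finitely many contributing multi-indices $\zeta_1$ and collecting constants yields both (\ref{ku11}) and (\ref{ku22}) with a constant $C_s$ depending only on $s$.
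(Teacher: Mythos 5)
The paper does not prove Lemma \ref{zhen1}; it is cited directly from Majda's book, where the standard proof is precisely the Leibniz--Gagliardo--Nirenberg argument you outline. Your strategy is therefore the right one and essentially reproduces the classical Moser/Klainerman--Majda approach.

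A few remarks on the details. What you flag as ``the hard part''---checking that the auxiliary exponents $(p_k,q_k)$ are admissible---is actually automatic with your choice $\theta_k=(k-1)/(s-1)$. The Gagliardo--Nirenberg balance for $|\nabla^k f|_{p_k}\le C|\nabla f|_a^{1-\theta_k}|\nabla^s f|_b^{\theta_k}$ forces $1/p_k=(1-\theta_k)/a+\theta_k/b$, and the balance for $|\nabla^{s-k}g|_{q_k}\le C|g|_a^{\theta_k}|\nabla^{s-1}g|_b^{1-\theta_k}$ forces $1/q_k=\theta_k/a+(1-\theta_k)/b$; these sum to $1/a+1/b=1/r$ by construction, and as convex combinations of $1/a$ and $1/b$ each exponent automatically lies in $[1,\infty]$. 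So there is no extra book-keeping there. The point where some genuine care is needed is the one you also mention: for $2\le k\le s-1$ the interpolation parameter $\theta_k$ is strictly in $(0,1)$, which avoids the known exceptional cases of Gagliardo--Nirenberg, but when $a$ or $b$ equals $1$ or $\infty$ (which the hypothesis permits) one must verify that the particular interpolation instances used are among the valid ones, or fall back to a direct H\"older bound as you suggest. Once that is noted, the argument closes with Young's inequality exactly as you describe, and interchanging the roles of $a$ and $b$ on the top-order $f$ factor gives \eqref{ku22} from the same computation. In short: correct approach, correct decomposition, and the exponent matching is cleaner than you anticipated.
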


The following lemma is important in the derivation of  the a priori estimates  in Section $3$, which can be found in Remark 1 of \cite{bjr}.
\begin{lemma}\cite{bjr}\label{1}
If $f(t,x)\in L^2([0,T]; L^2)$, then there exists a sequence $s_k$ such that
$$
s_k\rightarrow 0, \quad \text{and}\quad s_k |f(s_k,x)|^2_2\rightarrow 0, \quad \text{as} \quad k\rightarrow+\infty.
$$
\end{lemma}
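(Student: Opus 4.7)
The plan is to reduce this to a standard one-dimensional fact about non-negative $L^1$ functions. By Tonelli's theorem and the hypothesis $f \in L^2([0,T]; L^2)$, the scalar function $g(s) := |f(s,\cdot)|_2^2$ is non-negative, measurable, and integrable on $[0,T]$. It therefore suffices to prove the following claim: for any non-negative $g \in L^1([0,T])$, there exists a sequence $s_k \to 0^+$ with $s_k g(s_k) \to 0$.

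I would establish the claim by an elementary averaging argument. Absolute continuity of the Lebesgue integral gives
$$a_n := \int_0^{1/n} g(s)\, ds \longrightarrow 0 \quad \text{as} \quad n \to \infty.$$
For each $n \geq 1$, introduce the set
$$E_n := \{\, s \in (0, 1/n) \,:\, g(s) \leq 2 n a_n \,\}.$$
Were $|E_n| = 0$, we would have $g(s) > 2 n a_n$ for almost every $s \in (0, 1/n)$, and integrating over $(0,1/n)$ would yield $a_n \geq (1/n)(2 n a_n) = 2 a_n$, which is impossible when $a_n > 0$; the case $a_n = 0$ is trivial since then $g \equiv 0$ a.e.\ on $(0,1/n)$ and any point of that interval works. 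Hence $|E_n| > 0$, and selecting any $s_n \in E_n$ at which a fixed representative of $g$ is defined produces $s_n \leq 1/n$ together with $s_n g(s_n) \leq 2 a_n \to 0$, which is exactly the desired conclusion.

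There is no substantive obstacle here: the argument depends only on the finiteness of $\int_0^T g\, ds$ and the absolute continuity of the Lebesgue integral. The one minor point requiring attention is that $g$ is an equivalence class in $L^1$ rather than a pointwise function, so one must fix a representative before picking $s_n$; this is harmless because $|E_n| > 0$ guarantees that almost every point of $E_n$ is a legitimate choice.
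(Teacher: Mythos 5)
Your proof is correct. The paper does not actually prove this lemma --- it is quoted from Remark 1 of the cited reference [Boldrini--Rojas-Medar--Fern\'andez-Cara] --- so there is no in-paper argument to compare against; your averaging/Chebyshev argument on the intervals $(0,1/n)$ is the standard route (equivalent to observing that integrability of $g(s)=|f(s,\cdot)|_2^2$ forces $\liminf_{s\to 0^+} s\,g(s)=0$, since otherwise $g(s)\gtrsim 1/s$ near $0$), and your handling of the representative issue and of the degenerate case $a_n=0$ is sound.
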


The following regularity estimate for the Lam$\acute{ \text{e}}$ operator is standard in harmonic analysis.
\begin{lemma}\cite{harmo}\label{zhenok}
If $u\in D^{1,q}(\mathbb{R}^3)$ with $1< q< +\infty$ is a weak solution to the problem
\begin{equation}
\label{ok}
\begin{cases}
\displaystyle
-\alpha\triangle u-(\alpha+\beta)\nabla \text{div}u =Lu=Z, \\[8pt]
\displaystyle
 u\rightarrow 0 \quad \text{as} \quad  |x|\rightarrow +\infty,
\end{cases}
\end{equation}
then it holds that
$$
|u|_{D^{k+2,q}} \leq C |Z|_{D^{k,q}},
$$
where the constant $C>0$ depends only on $\alpha$,  $\beta$  and $q$.
\end{lemma}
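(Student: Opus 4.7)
The plan is to treat $L$ as a constant-coefficient, strongly elliptic second-order system on $\mathbb{R}^3$ and invoke classical Calder\'on--Zygmund/multiplier theory. First I would check strong ellipticity by computing the Fourier symbol
\[
\widehat{L}(\xi)=\alpha|\xi|^2\mathbb{I}_3+(\alpha+\beta)\,\xi\xi^{\top}.
\]
Decomposing $\mathbb{R}^3$ into the line $\mathbb{R}\xi$ and its orthogonal complement, one reads off the eigenvalues $\alpha|\xi|^2$ (with multiplicity two) and $(2\alpha+\beta)|\xi|^2$. Under the standing hypotheses $\alpha>0$ and $2\alpha+3\beta\ge 0$, one has $\beta\ge -\tfrac{2\alpha}{3}$, hence $2\alpha+\beta\ge \tfrac{4\alpha}{3}>0$, so $\widehat{L}(\xi)$ is positive definite with a uniform lower bound $c(\alpha,\beta)|\xi|^2$. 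This is the strong ellipticity that drives everything.

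Next I would prove the base case $k=0$. Formally solving in Fourier variables gives $\widehat{u}=\widehat{L}(\xi)^{-1}\widehat{Z}$, so $\widehat{\partial^2 u}$ is obtained from $\widehat{Z}$ by multiplication by the matrix symbol $m(\xi)=\xi_i\xi_j\,\widehat{L}(\xi)^{-1}$. Each entry of $m(\xi)$ is smooth away from the origin and homogeneous of degree zero, so it satisfies the Mikhlin--H\"ormander condition; equivalently, the corresponding convolution kernel (built from second derivatives of the Kelvin fundamental solution of $L$) is of Calder\'on--Zygmund type. Therefore, for every $1<q<\infty$,
\[
|\nabla^2 u|_q\le C(\alpha,\beta,q)\,|Z|_q.
\]

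For $k\ge 1$, I would simply differentiate the equation: since the coefficients of $L$ are constant, $L(\partial^\zeta u)=\partial^\zeta Z$ for every multi-index $\zeta$ with $|\zeta|=k$. Applying the base case to $\partial^\zeta u$ yields
\[
|u|_{D^{k+2,q}}=|\nabla^{k+2}u|_q\le C|\nabla^k Z|_q=C|Z|_{D^{k,q}},
\]
with the same constant $C=C(\alpha,\beta,q)$ by dimensional considerations.

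The only delicate point, and the one I would flag as the main (minor) obstacle, is to justify that the solution produced by the Fourier/kernel formula agrees with the given weak solution $u\in D^{1,q}(\mathbb{R}^3)$ satisfying $u\to 0$ at infinity. This amounts to a Liouville-type uniqueness statement: any distributional solution $w\in D^{1,q}$ of $Lw=0$ that vanishes at infinity must be zero. One argues via the Fourier side (the symbol is invertible off the origin, so $\widehat{w}$ is supported at $0$, i.e.\ $w$ is polynomial) combined with the $D^{1,q}$ integrability and the far-field condition, which together force the polynomial to vanish. Once uniqueness is in hand, the kernel representation applies to the given $u$ and the estimates above close the proof.
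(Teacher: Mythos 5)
The paper does not prove this lemma; it cites it from Stein's monograph \cite{harmo} as a standard elliptic estimate. Your reconstruction is correct and is precisely the textbook route: the symbol computation showing $\widehat{L}(\xi)$ has eigenvalues $\alpha|\xi|^2$ (multiplicity two) and $(2\alpha+\beta)|\xi|^2$, both positive under $\alpha>0$, $2\alpha+3\beta\ge 0$; the Mikhlin--H\"ormander (equivalently Calder\'on--Zygmund) bound for the degree-zero homogeneous multiplier $\xi_i\xi_j\widehat{L}(\xi)^{-1}$ giving the $k=0$ case; commuting $\partial^\zeta$ with the constant-coefficient operator for $k\ge 1$; and the Liouville argument (polynomial from Fourier support at the origin, killed by $\nabla w\in L^q$ plus the decay at infinity) to identify the given weak solution with the one produced by the kernel. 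No gap; this is the intended argument behind the citation.
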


The final  lemma is useful  to improve a weak convergence to the strong convergence.
\begin{lemma}\cite{amj}\label{zheng5}
If the function sequence $\{w_n\}^\infty_{n=1}$ converges weakly  to $w$ in a Hilbert space $X$, then it converges strongly to $w$ in $X$ if and only if
$$
\|w\|_X \geq \lim \text{sup}_{n \rightarrow \infty} \|w_n\|_X.
$$
\end{lemma}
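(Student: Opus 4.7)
The plan is to exploit the inner product structure of the Hilbert space $X$, since apart from weak convergence the only tool required is the polarization-type expansion of $\|w_n - w\|_X^2$. The ``only if'' direction is immediate: if $w_n \to w$ strongly, continuity of the norm yields $\|w_n\|_X \to \|w\|_X$, so in particular $\limsup_{n\to\infty} \|w_n\|_X = \|w\|_X$, and the stated inequality $\|w\|_X \geq \limsup_n \|w_n\|_X$ holds with equality.

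For the substantive ``if'' direction, I would start from the identity
\[
\|w_n - w\|_X^2 \;=\; \|w_n\|_X^2 \;-\; 2\,\text{Re}\,\langle w_n, w\rangle_X \;+\; \|w\|_X^2.
\]
Weak convergence $w_n \rightharpoonup w$ means, by definition, that $\langle w_n, v\rangle_X \to \langle w, v\rangle_X$ for every $v\in X$; taking $v = w$ gives $\langle w_n, w\rangle_X \to \|w\|_X^2$. Passing to $\limsup_n$ in the expansion above and using that this cross term is convergent, one obtains
\[
\limsup_{n\to\infty} \|w_n - w\|_X^2 \;=\; \limsup_{n\to\infty} \|w_n\|_X^2 \;-\; \|w\|_X^2.
\]
The hypothesis $\|w\|_X \geq \limsup_n \|w_n\|_X$ (together with nonnegativity of norms) implies $\limsup_n \|w_n\|_X^2 \leq \|w\|_X^2$, so $\limsup_n \|w_n - w\|_X^2 \leq 0$. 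Since the quantity is nonnegative, this forces $\|w_n - w\|_X \to 0$, which is strong convergence.

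I would close with the observation that the stated one-sided inequality is actually equivalent, in the presence of weak convergence, to full norm convergence $\|w_n\|_X \to \|w\|_X$: the Banach--Steinhaus theorem guarantees $\{w_n\}$ is bounded, and weak lower semicontinuity of the norm gives the opposite inequality $\|w\|_X \leq \liminf_n \|w_n\|_X$ for free. Thus the lemma can equivalently be phrased as ``weak convergence plus norm convergence $\Leftrightarrow$ strong convergence,'' which is the standard Radon--Riesz property of Hilbert spaces. There is no genuine obstacle to overcome here; the entire argument collapses to the polarization identity, and the role of the lemma in the paper is simply to upgrade weakly convergent subsequences produced by Aubin--Lions-type compactness in \S 3.5 to strongly convergent ones whenever a matching norm bound is available.
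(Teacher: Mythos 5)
Your proof is correct and is the standard polarization argument for the Radon--Riesz property of Hilbert spaces; the paper itself gives no proof of this lemma, citing it directly from \cite{amj}, so there is nothing to diverge from. The one point worth being careful about --- that $\limsup_n(a_n+b_n)=\limsup_n a_n+\lim_n b_n$ only because the cross term actually converges --- you handle explicitly, and your closing observation that the one-sided inequality combined with weak lower semicontinuity of the norm forces $\|w_n\|_X\to\|w\|_X$ is accurate.
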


\section{Local-in-time well-posedness  of regular solutions}
This section is devoted to  proving Theorem \ref{th2}. To this end, we first reformulate the original Cauchy problem (\ref{eq:1.1})-(\ref{far})  as  (\ref{eq:cccq})-(\ref{sfanb1}) below in terms of some new variables, and then establish the local well-posedness of the smooth solution to (\ref{eq:cccq})-(\ref{sfanb1}). In the end of this section, one can show that the existence result for the reformulated problem indeed implies Theorem \ref{th2}.

\subsection{Reformulation}
In terms of   variables
\begin{equation}\label{bianliang}\phi=\frac{A\gamma}{\gamma-1} \rho^{\gamma-1},\quad \psi=\frac{\delta}{\delta-1}\nabla \rho^{\delta-1}=\frac{\delta}{\delta-1}\Big(\frac{A\gamma}{\gamma-1}\Big)^{\frac{1-\delta}{\gamma-1}}\nabla \phi^{\frac{\delta-1}{\gamma-1}}=(\psi^{(1)},\psi^{(2)},\psi^{(3)})
\end{equation}
and $u$,  the system (\ref{eq:1.1})  can be rewritten as
\begin{equation}
\begin{cases}
\label{eq:cccq}
\displaystyle
\phi_t+u\cdot \nabla \phi+(\gamma-1)\phi \text{div} u=0,\\[12pt]
\displaystyle
u_t+u\cdot\nabla u +\nabla \phi+a\phi^{2e}Lu=\psi \cdot Q(u),\\[12pt]
\displaystyle
\psi_t+\nabla (u\cdot \psi)+(\delta-1)\psi\text{div} u +\delta a\phi^{2e}\nabla \text{div} u=0,
 \end{cases}
\end{equation}
where
\begin{equation} \label{xishu}
\begin{split}
& a=\Big(\frac{A\gamma}{\gamma-1}\Big)^{\frac{1-\delta}{\gamma-1}},\quad \text{and} \quad e=\frac{\delta-1}{2(\gamma-1)}<0.
\end{split}
\end{equation}
The  initial data is given by
\begin{equation} \label{sfana1}
\begin{split}
&(\phi, u,\psi)|_{t=0}=(\phi_0, u_0, \psi_0)=\Big(\frac{A\gamma}{\gamma-1} \rho^{\gamma-1}_0(x),   u_0(x), \frac{\delta}{\delta-1}\nabla \rho^{\delta-1}_0(x)\Big),\quad x\in \mathbb{R}^3.
\end{split}
\end{equation}
$(\phi, u,\psi)$ is assumed to satisfy the far field behavior:
\begin{equation}\label{sfanb1}
\begin{split}
(\phi, u,\psi)\rightarrow (0,0,0),\quad \text{as}\quad  |x|\rightarrow +\infty,\quad t \geq 0.
\end{split}
\end{equation}

 To prove Theorem \ref{th2}, our first step is to establish the following well-posedness to the reformulated problem (\ref{eq:cccq})-(\ref{sfanb1}).
\begin{theorem}\label{th1} Let (\ref{canshu}) hold. If the initial data $( \phi_0, u_0, \psi_0)$ satisfies:
\begin{equation}\label{th78qq}
\begin{split}
&\phi_0>0,\quad  (\phi_0, u_0)\in H^3, \quad  \psi_0 \in  D^1\cap D^2,\quad \nabla \phi^e_0\in L^4,
\end{split}
\end{equation}
and the initial  compatibility conditions:
\begin{equation}\label{th78zxq}
\displaystyle
 \nabla u_0=\phi^{-e}_0g_1,\quad     aLu_0=\phi^{-2e}_0g_2,\quad \nabla \big(a\phi^{2e}_0Lu_0\big)=\phi^{-e}_0g_3,
\end{equation}
for some $(g_1,g_2,g_3)\in L^2$,
then there exist a  time $T_*>0$ and a unique classical solution $\Big(\phi, u,\psi=\frac{a\delta}{\delta-1}\nabla \phi^{2e}\Big)$ to the Cauchy problem (\ref{eq:cccq})-(\ref{sfanb1}), satisfying
\begin{equation}\label{zhengzeA}
\begin{split}
& \phi \in C([0,T_*];H^3),\ \ \nabla \phi/ \phi \in L^\infty([0,T_*];L^\infty\cap L^6\cap  D^{1,3} \cap D^2),\\
& \psi \in C([0,T_*]; D^1\cap D^2), \ \   \phi^{-2e} \in L^\infty([0,T_*];L^\infty\cap D^{1,6} \cap D^{2,3} \cap D^3), \\
&  u\in C([0,T_*]; H^3)\cap L^2([0,T_*] ; H^4), \ \     \phi^{2e}\nabla u \in L^\infty([0,T_*]; D^1),\\
& \phi^{2e}\nabla^2 u \in C([0,T_*]; H^1),\ \  \phi^{2e}\nabla^2 u\in  L^2([0,T_*] ; D^2),\ \phi^{e}\nabla u\in  C([0,T_*]; L^2),\\
&  \phi^{e}\nabla u_t\in  L^\infty([0,T_*]; L^2),\ \ u_t \in C([0,T_*]; H^1)\cap L^2([0,T_*]; D^2),\\
&  (\phi^{2e}\nabla^2 u)_t \in  L^2([0,T_*] ; L^2),\ \  u_{tt}\in L^2([0,T_*];L^2),\  t^{\frac{1}{2}}u\in L^\infty([0,T_*];D^4),\\ & t^{\frac{1}{2}}u_t\in L^\infty([0,T_*];D^2)\cap L^2([0,T_*] ; D^3),\  t^{\frac{1}{2}}u_{tt}\in L^\infty([0,T_*];L^2)\cap L^2([0,T_*];D^1).
\end{split}
\end{equation}
\end{theorem}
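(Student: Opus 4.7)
The plan is to prove Theorem \ref{th1} via the approximation--linearization scheme sketched in the introduction, obtain uniform estimates, and then recover the nonlinear solution in two limits ($\epsilon\to0$ then $\eta\to0$) followed by a Picard-type iteration.

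First, I would regularize the data: fix parameters $\epsilon>0$ (artificial viscosity) and $\eta>0$ (positive lower bound of $\phi_0$), and replace $\phi_0$ by $\phi_0+\eta$ (smoothed if necessary). With $h=\phi^{2e}$, I would set up the linearized system (\ref{li4}) as described: given a known $(v,g)$ with $(v(0,\cdot),g(0,\cdot))=(u_0,\phi_0^{2e})$, solve sequentially the transport equation for $\phi$, the transport equation $h_t+v\cdot\nabla h+(\delta-1)g\,\dd v=0$ for $h$, define $\psi=\frac{a\delta}{\delta-1}\nabla h$, and then solve the \emph{linear} parabolic equation
\[
u_t+v\cdot\nabla v+\nabla\phi+a\sqrt{h^2+\epsilon^2}\,Lu=\psi\cdot Q(v).
\]
Because $\sqrt{h^2+\epsilon^2}\geq\epsilon>0$, standard parabolic theory (Galerkin or semigroup) yields a unique global-in-time solution in the appropriate space. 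The key structural point is that $\psi=\frac{a\delta}{\delta-1}\nabla h$ is preserved (unlike the naive linearization (\ref{eq:cccq-fenxi})), which rescues the $L^2$ energy identity for $u$ shown in (\ref{relationvainish}).

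Next comes the heart of the proof: a priori estimates uniform in $(\epsilon,\eta)$ for $(\phi,h,\psi,u)$. I would carry these out in increasing order of regularity. The $L^2$ estimate for $u$ produces the dissipation $a\alpha|\phi^{e}\nabla u|_2^2+a(\alpha+\beta)|\phi^{e}\dd u|_2^2$; the remaining forcing $\psi\cdot Q(v)\cdot u$ and $\nabla\phi\cdot u$ are closed by Gagliardo--Nirenberg (Lemma \ref{lem2as}) together with the Hardy-type control afforded by $\psi\in D^1\cap D^2$. Estimates for $\phi$ are obtained from the transport equation by Lemma \ref{zhen1}; analogously for $h,\psi$, with the crucial source term $a\delta(g\nabla\dd v+\nabla g\,\dd v)$ in $\psi_t$ closed via Moser commutators. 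Estimates up to $H^3$ for $u$ use the time-differentiated equation to get $u_t\in L^\infty L^2$ (which is where the compatibility condition $aLu_0=\phi_0^{-2e}g_2$ enters), then $\phi^{e}\nabla u_t\in L^\infty L^2$ (using the third condition $\nabla(a\phi_0^{2e}Lu_0)=\phi_0^{-e}g_3$), and finally the singular elliptic estimate (\ref{singularelliptic}) applied to
\[
aL(h u)=-u_t-v\cdot\nabla v-\nabla\phi+\psi\cdot Q(v)-\tfrac{\delta-1}{\delta}G(\psi,u)=:W,
\]
which hands over $|hu|_{D^3}$ and $|hu|_{D^4}$ bounds in terms of $|W|_{D^1},|W|_{D^2}$, provided the decay $hu\to0$ at infinity, something to be verified from the approximation. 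To pass to the $\eta\to0$ limit I would also track the auxiliary quantities $\varphi=h^{-1}$ and $f=\psi\varphi$ as described in \S 3.3: their equations are genuinely hyperbolic (no degenerate/singular source), and the assumption $\nabla\phi_0^e\in L^4$ (equivalently $\nabla\rho_0^{\frac{\delta-1}{2}}\in L^4$) is exactly what gives $f(0,\cdot)\in L^\infty\cap L^6\cap D^{1,3}\cap D^2$.

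With uniform estimates in hand, I would first pass to the limit $\epsilon\to0$ by weak-$*$ compactness plus the Aubin--Lions lemma (Lemma \ref{aubin}) applied on a truncated domain to extract strong convergence of $(v,g)$, obtaining a solution of the linearized problem without artificial viscosity but still with $\phi_0\geq\eta>0$. Then I would set up the Picard iteration $(v_k,g_k)\mapsto(u_{k+1},h_{k+1}^{2e})$, showing contraction in a lower-regularity norm (typically $L^\infty L^2\cap L^2 D^1$ for $u$ and $L^\infty L^2$ for $\phi,\psi$) on a short time interval $[0,T_*]$ whose length depends only on the initial quantities in (\ref{th78qq})--(\ref{th78zxq}) and not on $\eta$; this yields a classical solution of (\ref{eq:cccq}) with $\phi_0\geq\eta$, and uniqueness is standard. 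Finally, letting $\eta\to0$ and using the $(\varphi,f)$ estimates to control the far-field vacuum, I recover a solution of (\ref{eq:cccq})--(\ref{sfanb1}) in the regularity class (\ref{zhengzeA}); the identity $\psi=\frac{a\delta}{\delta-1}\nabla\phi^{2e}$ persists in the limit because it holds at each approximate level. The main obstacle throughout will be the double degeneracy in $(\ref{eq:cccq})_2$: the coefficient $\phi^{2e}\to\infty$ at the far field while the time-evolution coefficient of $u$ remains $1$, so one must use the \emph{reformulated} elliptic estimate (\ref{singularelliptic}) on $L(hu)$ rather than on $u$ directly, and carefully balance powers of $\phi$ when applying (\ref{ine}) to the nonlinear terms $\psi\cdot Q(u)$ and $G(\psi,u)$ so that every constant remains independent of $\eta$.
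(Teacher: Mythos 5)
Your proposal follows essentially the same route as the paper: the same linearization (\ref{li4}) preserving $\psi=\frac{a\delta}{\delta-1}\nabla h$, the same uniform-in-$(\epsilon,\eta)$ estimates built on the auxiliary variables $\varphi=h^{-1}$ and $f=\psi\varphi$ and on the singular elliptic estimate for $L(hu)$, and the same order of limits ($\epsilon\to0$, then iteration for fixed $\eta$, then $\eta\to0$). The only point stated too casually is the recovery of the relations $\varphi=\phi^{-2e}$ and $f=\frac{2ae\delta}{\delta-1}\nabla\phi/\phi$ in the limit: these do not hold at each iterate (since $h^{k+1}$ is produced by a linearized transport equation involving $h^k$), and the paper verifies them by showing the differences solve a homogeneous transport system (\ref{liyhn-recover}) with zero data.
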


This theorem will be proved  in the subsequent four Subsections $3.2$-$3.6$.
\subsection{Linearization}\label{linear2}

 In order to solve  the nonlinear problem (\ref{eq:cccq})-(\ref{sfanb1}), we  need to consider the corresponding  linearized problem.  Before this,  it is necessary to analyze the  structure of  the equations $(\ref{eq:cccq})$.
\subsubsection{Structure of the nonlinear equations (\ref{eq:cccq}).}
First,  due to the definition of $\psi$, if $\psi\in D^{1,2}(K)\cap D^{2,2}(K)$ for any compact set $K\subset \mathbb{R}^3$,
a direct calculation shows that
$$\partial_i \psi^{(j)}=\partial_j \psi^{(i)} \quad \text{for}\ i,j=1,2,3$$
 in the sense of distributions.  Thus, $(\ref{eq:cccq})_2$
can be rewritten as
\begin{equation}\label{kuzxc}
\psi_t+\sum_{l=1}^3 A_l \partial_l\psi+B\psi+\delta a\phi^{2e}\nabla \text{div} u=0,
\end{equation}
where  $A_l=(a^l_{ij})_{3\times 3}$ for  $i,j,l=1,2,3$,
are symmetric  with
$$a^l_{ij}=u^{(l)}\quad \text{for}\ i=j;\quad \text{otherwise}\  a^l_{ij}=0, $$
 and $B=(\nabla u)^\top+(\delta-1)\text{div}u\mathbb{I}_3$. This implies that  the subtle source term $\psi$  could be controlled by  the     symmetric hyperbolic  system (\ref{kuzxc}).

Second,  for equations $(\ref{eq:cccq})_3$, note that the coefficients $a \phi^{2e}$ in  front of the Lam\'e operator $ L$ will tend to $\infty$ as $\phi \rightarrow 0$ in the far filed. In order to make full use of this special structure,  though system (\ref{eq:cccq}) for $(\phi,u,\psi)$ has been already a closed one,  it  is helpful to get  some more precise estimates by introducing two auxiliary quantities:
\begin{equation}\label{guanxizai}
\varphi=\phi^{-2e},\quad f=\psi \varphi=\frac{2a e \delta }{\delta-1}\frac{\nabla \phi}{\phi}=(f^{(1)},f^{(2)},f^{(3)}).
\end{equation}

Next,  we will show formally the time evolution mechanisms of  $(\varphi,f)$  based on the initial regularities (\ref{th78qq}) and  system (\ref{eq:cccq}).
On the one hand, it follows  from (\ref{bianliang}) and (\ref{guanxizai}) that
\begin{equation}\label{guanxishi}
\psi_0= \frac{a\delta}{\delta-1}   \nabla \varphi^{-1}_0=-  \frac{a\delta}{\delta-1} \varphi^{-2}_0\nabla \varphi_0, \quad   f_0=\varphi_0 \psi_0,
\end{equation}
which, along with (\ref{th78qq}), implies  that
\begin{equation}\label{chushi1}
\begin{split}
\varphi_0=&\phi^{-2e}_0 \in L^\infty\cap D^{1,6} \cap D^{2,3} \cap D^3,\ \  f_0=\frac{2a e \delta }{\delta-1}\nabla \phi_0/ \phi_0 \in L^\infty\cap L^6\cap  D^{1,3} \cap D^2.
\end{split}
\end{equation}

On the other hand,  it follows  from (\ref{bianliang}) and equations $(\ref{eq:cccq})$ that
\begin{equation}\label{qian1}
\begin{split}
\varphi_t+u\cdot \nabla \varphi-(\delta-1)\varphi\text{div} u=&0,\\
f_t+\nabla (u \cdot f)+a\delta\nabla \text{div} u=&0.
\end{split}
\end{equation}

 If $f \in D^{1,2}(K)\cap D^{2,2}(K)$ for any compact set $K\subset \mathbb{R}^3$,  a direct calculation shows that
$$\partial_i f^{(j)}=\partial_j f^{(i)} \quad \text{for}\ i,j=1,2,3$$
 in the sense of distributions.  Then $(\ref{qian1})_2$
can be written as
\begin{equation}\label{qian3}
f_t+\sum_{l=1}^3 A_l \partial_lf+B^*f+a\delta\nabla \text{div} u=0,
\end{equation}
where  $B^*=(\nabla u)^\top$. Thus $f$  satisfies   the     symmetric hyperbolic  system (\ref{qian3}).

It should be  pointed out that  a key observation here is that the structure in $(\ref{qian1})$-$(\ref{qian3})$ for $(\varphi,f)$ makes it possible to show  that  the subtle term $a \phi^{2e}Lu$ is well defined in $H^2$ when vacuum appears in the far field.

\subsubsection{Linearized problem for uniformly positive initial density and artificial viscosity.}
 Motivated by the above observations,  we will consider the following linearized problem for $(\phi,u,h)$:
\begin{equation}\label{li4}
\begin{cases}
\displaystyle
\phi_t+v\cdot \nabla \phi+(\gamma-1)\phi \text{div} v=0,\\[6pt]
\displaystyle
u_t+v\cdot\nabla v +\nabla \phi+a \sqrt{h^2+\epsilon^2} Lu=\psi \cdot Q(v),\\[10pt]
\displaystyle
h_t+v\cdot \nabla h+(\delta-1)g\text{div} v=0,\\[6pt]
\displaystyle
(\phi,u,h)|_{t=0}=(\phi_0,u_0,h_0)=\big(\phi_0, u_0, (\phi_0)^{2e}\big),\quad x\in \mathbb{R}^3,\\[10pt]
\displaystyle
(\phi,u,h)\rightarrow (\phi^\infty,0,h^\infty),\quad \text{as}\quad  |x|\rightarrow +\infty,\quad t\geq 0,
 \end{cases}
\end{equation}
 where $\epsilon$  and $\phi^\infty$ are both  positive constants,
$$ h^\infty=(\phi^\infty)^{2e},\quad \psi=\frac{a\delta}{\delta-1}\nabla h,$$
 $v=(v^{(1)},v^{(2)}, v^{(3)})\in \mathbb{R}^3$ is a  known vector and $g$ is a known real (scalar) function satisfying $(v(0,x), g(0,x))=(u_0,h_0)=(u_0, (\phi_0)^{2e})$ and:
\begin{equation}\label{vg}
\begin{split}
&g\in L^\infty\cap C([0,T]\times \mathbb{R}^3),\quad \nabla g \in C([0,T]; H^2),\quad   g_t\in C([0,T];H^2), \\
& \nabla g_{tt} \in L^2([0,T]; L^2),   \ \  v\in C([0,T]; H^3)\cap L^2([0,T]; H^4), \quad t^{\frac{1}{2}}v\in L^\infty([0,T];D^4),\\
&  v_t \in C([0,T]; H^1)\cap L^2([0,T]; D^2),\quad  v_{tt}\in L^2([0,T];L^2),\\
&   t^{\frac{1}{2}}v_t\in L^\infty([0,T];D^2)\cap L^2([0,T]; D^3),\quad  t^{\frac{1}{2}}v_{tt}\in L^\infty([0,T];L^2)\cap L^2([0,T];D^1),
\end{split}
\end{equation}
where $T>0$ is an arbitrary constant.

Now  the following global well-posedness in  $[0,T]\times \mathbb{R}^3$ of a classical solution to (\ref{li4}) can be obtained by the standard theory (\cite{CK3,oar,amj}) at least when $\phi_0$ is uniformly positive and   $\epsilon>0$.

 \begin{lemma}\label{lem1} Let (\ref{canshu}) hold and $\epsilon>0$.
 Assume  that $(\phi_{0},u_0,h_0=(\phi_0)^{2e})$ satisfies
 \begin{equation}\label{zhenginitial}\begin{split}
& \eta<\phi_0,\ \  \phi_0-\phi^\infty \in   H^3,\  \  \ u_0\in H^3,
\end{split}
\end{equation}
 for some constant $\eta>0$.
 Then for any $T>0$,  there exists a unique classical solution $(\phi,u,h)$ in $[0,T]\times \mathbb{R}^3$ to  (\ref{li4}) such that
\begin{equation}\label{reggh}\begin{split}
&\phi-\phi^\infty\in C([0,T]; H^3), \quad h\in L^\infty\cap C([0,T]\times \mathbb{R}^3),\quad \nabla h \in C([0,T]; H^2),\\
& h_t\in C([0,T];H^2), \ \   u\in C([0,T]; H^3)\cap L^2([0,T]; H^4), \\
&   u_t \in C([0,T]; H^1)\cap L^2([0,T]; D^2),\ \  u_{tt}\in L^2([0,T];L^2),\quad  t^{\frac{1}{2}}u\in L^\infty([0,T];D^4),\\
&  t^{\frac{1}{2}}u_t\in L^\infty([0,T];D^2)\cap L^2([0,T]; D^3),\quad   t^{\frac{1}{2}}u_{tt}\in L^\infty([0,T];L^2)\cap L^2([0,T];D^1).
\end{split}
\end{equation}
\end{lemma}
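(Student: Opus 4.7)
The linearized system (\ref{li4}) decouples cleanly: the two transport equations for $\phi$ and $h$ depend only on the prescribed $v$ (and $g$ for $h$), so they can be solved first; once $\phi, h$, and $\psi=\frac{a\delta}{\delta-1}\nabla h$ are in hand, the second equation becomes a linear parabolic system for $u$ with a known source. The artificial term $\epsilon^2$ ensures $a\sqrt{h^2+\epsilon^2}\geq a\epsilon>0$, which together with the initial lower bound $\phi_0>\eta>0$ places us in the regime of classical, uniformly parabolic linear PDE theory — no genuine degeneracy is active at this stage, which is exactly what makes the lemma accessible by standard methods.

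\textbf{Step 1: Transport equations for $\phi$ and $h$.} The plan is to solve both equations by the method of characteristics. Since $v\in C([0,T];H^3)\cap L^2([0,T];H^4)$, Sobolev embedding gives $\nabla v\in L^1([0,T];L^\infty)$, so the flow map $X^t$ of $v$ is a $C^1$ diffeomorphism on $\mathbb{R}^3$. The representation formula along characteristics propagates the positive lower bound of $\phi$ on $[0,T]$ (depending on $\eta$ and $\int_0^T|\text{div}\,v|_\infty\,ds$), and the standard $H^3$ energy estimate using the Moser-type commutator bounds of Lemma~\ref{zhen1} yields $\phi-\phi^\infty\in C([0,T];H^3)$. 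Applying the same argument to the $h$-equation, treating $(\delta-1)g\,\text{div}\,v$ as a known source with the regularity supplied by (\ref{vg}), gives $h\in L^\infty\cap C([0,T]\times\mathbb{R}^3)$ with $\nabla h\in C([0,T];H^2)$, and reading off $h_t$ directly from the equation produces $h_t\in C([0,T];H^2)$.

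\textbf{Step 2: Parabolic equation for $u$.} With $\phi, h, \psi$ now fixed, the $u$-equation
\[
u_t + a\sqrt{h^2+\epsilon^2}\,Lu \;=\; \psi\cdot Q(v) - v\cdot\nabla v - \nabla\phi \;=:\; F
\]
is linear and uniformly parabolic: the coefficient is bounded above and below by ($\epsilon,\eta$-dependent) positive constants, $\nabla\bigl(a\sqrt{h^2+\epsilon^2}\bigr)\in L^\infty([0,T];H^2)$, and $F\in L^2([0,T];H^2)$ by the regularity just established and (\ref{vg}). I would build $u$ by a Galerkin approximation with $H^3$ energy estimates and pass to the limit to obtain $u\in C([0,T];H^3)\cap L^2([0,T];H^4)$. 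Differentiating in time and performing energy estimates at one lower order then yield $u_t\in C([0,T];H^1)\cap L^2([0,T];D^2)$ and $u_{tt}\in L^2([0,T];L^2)$; the required initial datum $u_t(0,\cdot)$ is read off from the equation at $t=0$ and lies in $H^1$ by the $H^3$ regularity of $u_0,\phi_0,h_0$ and the smoothness of $v_0,g_0$.

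\textbf{Step 3: Weighted higher regularity, uniqueness, and main obstacle.} For the weighted estimates $t^{1/2}u\in L^\infty([0,T];D^4)$, $t^{1/2}u_t\in L^\infty([0,T];D^2)\cap L^2([0,T];D^3)$, and $t^{1/2}u_{tt}\in L^\infty([0,T];L^2)\cap L^2([0,T];D^1)$, I would multiply the once- and twice-time-differentiated equations by $t$ before performing energy estimates; the factor $t$ then absorbs the absence of higher-order initial compatibility for $u_t,u_{tt}$ at $t=0$, exactly as in the constant-viscosity treatment of \cite{CK3}. Uniqueness follows from a direct $L^2$ energy estimate for the difference of two solutions combined with Gronwall's inequality. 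The main technical burden lies in Step~3: the cascade of time-differentiation estimates requires careful bookkeeping of terms involving $v_t, v_{tt}, \phi_t, h_t, \psi_t$, and it is precisely for this reason that (\ref{vg}) imposes the weighted bound $t^{1/2}v_{tt}\in L^\infty([0,T];L^2)\cap L^2([0,T];D^1)$ on the prescribed vector field. Because the $\epsilon$-regularisation and $\eta$-positivity remove all degeneracies at this stage, all these steps fall within the standard frameworks of \cite{CK3,oar,amj}.
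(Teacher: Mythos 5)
The paper does not actually write out a proof of this lemma — it simply asserts that the result ``can be obtained by the standard theory (\cite{CK3,oar,amj}) at least when $\phi_0$ is uniformly positive and $\epsilon>0$'' — and your proposal is a correct, reasonably detailed unpacking of exactly that standard argument: decouple the two linear transport equations for $\phi$ and $h$ (solved by characteristics and Moser-type energy estimates), then treat the $u$-equation as a uniformly parabolic linear system (thanks to $a\sqrt{h^2+\epsilon^2}\geq a\epsilon>0$), with higher-order and $t$-weighted estimates obtained by time-differentiation. No gap; your approach matches the one the paper cites.
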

\begin{remark}\label{initialdateofh}
For the initial assumption on $h_0$, due to
$$
\eta<\phi_0,\ \  \phi_0-\phi^\infty \in   H^3 \quad \text{and} \quad h_0=(\phi_0)^{2e},$$
it holds that
$$
h_0\in L^\infty  \quad \text{and} \quad   \nabla h_0\in H^2.
$$
\end{remark}

Next  we are going to establish the uniform a priori estimates independent of $(\epsilon, \eta)$  for  the unique solution $(\phi,u,h)$ to (\ref{li4})  obtained in Lemma \ref{lem1}.

\subsection{A priori estimates independent of  $(\epsilon, \eta)$.} Let $(\phi_{0},u_0,h_0=(\phi_0)^{2e})$ be a given initial data satisfying the hypothesis of Lemma \ref{lem1}, and assume that there exists a constant  $c_0>0$ independent o f $\eta$ such that
\begin{equation}\label{houmian}\begin{split}
 IN_0=2+\phi^\infty+\|\phi_0-\phi^\infty\|_{3}+|\nabla h_0|_{D^1\cap D^2}+\|u_0\|_{3}+|g_1|_2+|g_2|_2 \ \ \ &\\
+|g_3|_2+\|h^{-1}_0\|_{L^\infty\cap D^{1,6} \cap D^{2,3} \cap D^3}+\|\nabla h_0 /h_0\|_{L^\infty \cap L^6\cap D^{1,3}\cap D^2}\leq& c_0,
\end{split}
\end{equation}
where
\begin{equation}\label{g123definition}
\displaystyle
g_1=\phi^{e}_0 \nabla u_0,\quad     g_2=a\phi^{2e}_0 Lu_0\quad \text{and} \quad  g_3= \phi^{e}_0\nabla \big(a\phi^{2e}_0Lu_0\big).
\end{equation}
\begin{remark}\label{c0}
The choice of the constant $c_0$,  independent of  $\eta$,   will be verified in the limit  process from the non-vacuum problem to the one with far field vacuum in Subsection \S 3.6 (see (\ref{co-verfify})).
\end{remark}
\begin{remark}\label{Additional information}
First, it follows  from (\ref{g123definition}), $\phi_0>\eta$ and
the far field behavior of  $(\phi,u,h)$  shown in  $(\ref{li4})_5$
 that
\begin{equation}\label{xintuoq}\begin{cases}
\displaystyle
a L(\phi^{2e}_0 u_0)= g_2-\frac{\delta-1}{\delta}G(\psi_0, u_0), \\[10pt]
\displaystyle
\phi^{2e}_0 u_0\rightarrow 0 \quad \text{as} \quad |x|\rightarrow +\infty,
\end{cases}
\end{equation}
where
\begin{equation*}
\begin{split}
G=& \alpha  \psi_0\cdot \nabla u_0+\alpha \text{div}(u_0\otimes \psi_0)+(\alpha+\beta)\big(\psi_0 \text{div}u_0+\psi_0 \cdot (\nabla u_0)+u_0  \cdot \nabla \psi_0\big).
\end{split}
\end{equation*}
Then  Lemma \ref{zhenok} and (\ref{houmian}) imply  that
\begin{equation}\label{gai11qq}
\begin{split}
|\phi^{2e}_0 u_0|_{D^2}\leq C\big(|g_2|_2 +|G(\psi_0, u_0)|_2\big)
\leq C_1<&+\infty, \\
|\phi^{2e}_0\nabla^2u_0|_2\leq C(|\phi^{2e}_0 u_0|_{D^2}+|\nabla \psi_0|_6 |u_0|_3+|\psi_0|_\infty| \nabla u_0|_2|)
\leq C_1< & +\infty,
\end{split}
\end{equation}
where $C_1$ is a positive constant depending  on $(c_0, A, \alpha, \beta, \gamma, \delta)$, but is independent of $(\epsilon,\eta)$.

Second, due to $\nabla^2 \phi^{2e}_0\in L^2$ and (\ref{gai11qq}), one gets easily
\begin{equation}\label{gai11qqcv}
\begin{split}
|\phi^e_0\nabla^2 \phi_0|_2+| \phi^{e}_0\nabla (\psi_0\cdot Q(u_0))|_2\leq C_1<+\infty.
\end{split}
\end{equation}
\end{remark}

Now we fix $T>0$, and   assume that there exist some time $T^*\in (0,T]$ and constants $c_i$ ($i=1,...,5$) such that
$$1< c_0\leq c_1 \leq c_2 \leq c_3 \leq c_4\leq c_5, $$
and
\begin{equation}\label{jizhu1}
\begin{split}
\sup_{0\leq t \leq T^*}\|\nabla g(t)\|^2_{D^1\cap D^2}\leq c^2_1,\ \ \sup_{0\leq t \leq T^*}\| v(t)\|^2_{1}+\int_{0}^{T^*} \Big( | v|^2_{D^2}+|v_t|^2_{2}\Big)\text{d}t \leq& c^2_2,\\[2pt]
\sup_{0\leq t \leq T^*}\big(|v|^2_{D^2}+|v_t|^2_{2}+|g\nabla^2v|^2_{2}\big)(t)+\int_{0}^{T^*} \Big( |v|^2_{D^3}+|v_t|^2_{D^1}\Big)\text{d}t \leq& c^2_3,\\[2pt]
\sup_{0\leq t \leq T^*}\big(|v|^2_{D^3}+|v_t|^2_{D^1}+| g_t|^2_{D^1}\big)(t)+\int_{0}^{T^*} \Big( | v|^2_{D^4}+|v_t|^2_{D^2}+|v_{tt}|^2_2\Big)\text{d}t \leq& c^2_4,\\[2pt]
\sup_{0\leq t \leq T^*}\big(|g\nabla^2v|^2_{D^1}+|g_t|^2_\infty\big)(t)+\int_{0}^{T^*} \Big(|(g\nabla^2 v)_t|^2_{2}+|g\nabla^2v|^2_{D^2}\Big)\text{d}t \leq& c^2_4,\\[2pt]
\text{ess}\sup_{0\leq t \leq T^*}t\big(|v_t|^2_{D^2}+|v|^2_{D^4}+|v_{tt}|^2_{2}\big)(t)+\int_{0}^{T^*} t\big(|v_{tt}|^2_{D^1}+|v_{t}|^2_{D^3}\big)\text{d}t \leq& c^2_5.
\end{split}
\end{equation}
$T^*$ and  $c_i$ ($i=1,...,5$) will be determined  later (see \eqref{dingyi45}), and
depend only on $c_0$ and the fixed constants $(A, \alpha, \beta, \gamma, \delta, T)$.

 In the following we are going to establish a series of uniform local (in time) estimates independent of  $(\epsilon, \eta)$  listed as Lemmas  \ref{2}-\ref{5}.
Hereinafter,  $C\geq 1$ will denote  a generic positive constant depending only on fixed constants $(A, \alpha, \beta, \gamma, \delta, T)$.

\subsubsection{The a priori estimates for $\phi$.}
 Now we estimate $\phi$.

\begin{lemma}\label{2} Let $(\phi,u,h)$ be the unique classical solution to (\ref{li4}) in $[0,T] \times \mathbb{R}^3$. Then
\begin{equation}\label{diyi}
\begin{split}
\|\phi(t)-\phi^\infty\|^2_3\leq Cc^2_0,\quad  |\phi_t(t)|_2\leq Cc_0c_2,\quad  |\phi_t(t)|_{D^1}\leq& Cc_0c_3,\\
  |\phi_t(t)|_{D^2}\leq Cc_0c_4,\quad
  |\phi_{tt}(t)|_2\leq Cc^3_4,\quad  \int_0^t \|\phi_{tt}\|^2_1 \text{d}s\leq& Cc^2_0c^2_4,
\end{split}
\end{equation}
\text{for} $0\leq t \leq T_1=\min (T^{*}, (1+c_4)^{-2})$.
\end{lemma}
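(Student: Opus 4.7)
The plan is to treat $\tilde\phi := \phi - \phi^\infty$ as the unknown of a linear transport equation with a source term, namely
\[
\tilde\phi_t + v\cdot\nabla\tilde\phi + (\gamma-1)\tilde\phi\,\text{div}\,v = -(\gamma-1)\phi^\infty\,\text{div}\,v,
\]
and run standard $H^3$ energy estimates on it. For each multi-index $|\zeta|\le 3$, apply $\partial_x^\zeta$, multiply by $\partial_x^\zeta\tilde\phi$, integrate over $\mathbb{R}^3$, and use integration by parts on the transport term so that the leading-order derivative falls on $v$. The Moser-type commutator estimate (Lemma 2.3) then controls $|\,[\partial^\zeta, v\cdot\nabla]\tilde\phi\,|_2$ and $|\,\partial^\zeta(\tilde\phi\,\text{div}\,v) - \tilde\phi\,\partial^\zeta\text{div}\,v\,|_2$ in terms of $\|\nabla v\|_2\|\tilde\phi\|_3$. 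Bounding $|v|_\infty$ and $|\nabla v|_\infty$ through Sobolev embedding and using the \emph{a priori} assumptions (\ref{jizhu1}) gives
\[
\frac{d}{dt}\|\tilde\phi\|_3^2 \le C\bigl(1+\|v\|_4\bigr)\|\tilde\phi\|_3^2 + C\bigl(\phi^\infty\bigr)^2\|v\|_4^2.
\]
Applying Grönwall on $[0,T_1]$ and using $\int_0^{T_1}\|v\|_4\,dt \le T_1^{1/2}c_4 \le (1+c_4)^{-1}c_4 \le 1$, one concludes $\|\tilde\phi(t)\|_3^2 \le Cc_0^2$; this is the only place where the time restriction $T_1\le(1+c_4)^{-2}$ is essentially used.

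For $\phi_t$, I will not run a separate energy estimate but read it off directly from the equation $\phi_t = -v\cdot\nabla\phi - (\gamma-1)\phi\,\text{div}\,v$. Taking $L^2$, $D^1$ and $D^2$ norms and applying the product/Moser estimate together with the $H^3$ bound just obtained on $\tilde\phi$ yields
\[
|\phi_t|_2 \le |v|_6|\nabla\phi|_3 + |\phi|_\infty|\text{div}\,v|_2 \le Cc_0c_2,
\]
and analogous chains give $|\phi_t|_{D^1}\le Cc_0c_3$ (using $\|v\|_2\le c_3$) and $|\phi_t|_{D^2}\le Cc_0c_4$ (using $\|v\|_3\le c_4$). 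The factor $c_0$ survives because $\tilde\phi$ and $\phi$ are controlled uniformly in $H^3$ by $Cc_0$, while each additional derivative on $\phi_t$ costs one derivative on $v$, hence steps through $c_2\to c_3\to c_4$.

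For $\phi_{tt}$, differentiate the evolution equation in $t$ to obtain
\[
\phi_{tt} = -v_t\cdot\nabla\phi - v\cdot\nabla\phi_t - (\gamma-1)\phi_t\,\text{div}\,v - (\gamma-1)\phi\,\text{div}\,v_t.
\]
For $|\phi_{tt}|_2$ each term is estimated by Hölder plus Sobolev, using $|v_t|_6 \le C|v_t|_{D^1}\le Cc_4$, $|v|_\infty\le C\|v\|_2\le Cc_3$, and the $\phi_t$ bounds just derived; since $c_0\le c_3\le c_4$, every product collapses into $Cc_4^3$. For $\int_0^t\|\phi_{tt}\|_1^2\,ds$ the analogous product estimates on $\nabla\phi_{tt}$ introduce $|v_t|_{D^2}$ and $|v|_{D^3}$; because these enter only through $\int_0^{T_1}|v_t|_{D^2}^2\,ds\le c_4^2$ and $\sup_{[0,T_1]}|v|_{D^3}\le c_4$, everything closes as $Cc_0^2c_4^2$.

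The only genuine obstacle is the first step: one must ensure that the Grönwall factor coming from the transport equation does not swallow the $c_0^2$ constant. This is exactly why $T_1$ is chosen as $\min(T^*,(1+c_4)^{-2})$ — the square root makes $\int_0^{T_1}|v|_{D^4}\,dt$ bounded uniformly (in fact small), which is the sole input controlling growth in $H^3$. All subsequent estimates are algebraic consequences of the equation and hence pose no further difficulty.
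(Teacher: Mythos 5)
Your overall route mirrors the paper's: (1) an $H^3$ transport energy estimate for $\tilde\phi = \phi - \phi^\infty$, then (2) read the bounds for $\phi_t$ and $\phi_{tt}$ off the equation by H\"older/Sobolev. Part (2) is correct and matches the paper's argument term by term. But part (1) has a concrete gap created by the Young's inequality you applied to the inhomogeneous term.

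Your displayed differential inequality is
\[
\frac{d}{dt}\|\tilde\phi\|_3^2 \le C(1+\|v\|_4)\|\tilde\phi\|_3^2 + C(\phi^\infty)^2\|v\|_4^2.
\]
Gr\"onwall applied to this produces a Duhamel term proportional to $(\phi^\infty)^2\int_0^{T_1}\|v\|_4^2\,ds$. By (\ref{jizhu1}) this integral is bounded only by $Cc_4^2$ (it is \emph{not} small on $[0,T_1]$: the $L^2_t$ bound $\int_0^{T^*}|v|_{D^4}^2\le c_4^2$ cannot be shrunk by restricting the time interval), so you obtain $\|\tilde\phi(t)\|_3^2\le Cc_0^2(1+c_4^2)$, not the claimed $Cc_0^2$. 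The smallness of $\int_0^{T_1}\|v\|_4\,dt$, which you correctly single out as the driver of the time restriction, only tames the exponential factor in Gr\"onwall; it does not help the quadratic forcing term you introduced.

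The fix is to keep the forcing \emph{linear} in $\|v\|_4$. Either estimate $\frac{d}{dt}\|\tilde\phi\|_3$ directly (as the paper does), giving
\[
\|\tilde\phi(t)\|_3 \le \Big(\|\tilde\phi_0\|_3 + \phi^\infty\int_0^t\|\nabla v\|_3\,ds\Big)\exp\Big(C\int_0^t\|v\|_4\,ds\Big),
\]
and close with $\int_0^{T_1}\|\nabla v\|_3\,ds \le T_1^{1/2}\bigl(\int_0^{T_1}\|\nabla v\|_3^2\,ds\bigr)^{1/2}\le C(1+c_4)^{-1}c_4\le C$; or, if you want to work with the squared norm, factor as $2\phi^\infty\|v\|_4\|\tilde\phi\|_3 \le \|v\|_4\bigl(\|\tilde\phi\|_3^2 + (\phi^\infty)^2\bigr)$ so the Duhamel contribution is $(\phi^\infty)^2\int_0^{T_1}\|v\|_4\,dt\le C(\phi^\infty)^2 \le Cc_0^2$. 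Either variant closes the bound as stated; the rest of your proof then goes through.
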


\begin{proof}
First, the stand energy estimates argument for transport equations and (\ref{jizhu1}) give
\begin{equation}\label{gb}\begin{split}
\|\phi(t)-\phi^\infty\|_{3}\leq& \Big(\|\phi_0-\phi^\infty\|_{3} +\phi^\infty\int_0^t \|\nabla v\|_{3}\text{d}s\Big)\exp\Big(C\int_0^t \| v\|_{4}\text{d}s\Big)\\
\leq & C c_{0} \quad \text{for}\quad 0\leq t \leq T_1=\min (T^{*}, (1+c_4)^{-2}).
\end{split}
\end{equation}

Second, it follows  from  the equation $(\ref{li4})_1$ and (\ref{ine}) that,  for $0\leq t \leq T_1$,
\begin{equation}\label{zhen6}
\begin{cases}
\ \  |\phi_t(t)|_2\leq  C\|v\|_1(\|\nabla \phi\|_1+|\phi|_\infty)\leq  Cc_0c_2,\\[6pt]
 |\phi_t(t)|_{D^1}\leq  C\|v\|_2(\|\nabla \phi\|_1+|\phi|_\infty) \leq  Cc_0c_3,\\[6pt]
  |\phi_t(t)|_{D^2}\leq  C\|v\|_3(\|\nabla \phi\|_2+|\phi|_\infty)
\leq  Cc_0c_4.
\end{cases}
\end{equation}

At last, using the relation
$$\phi_{tt}=-v_t\cdot \nabla \phi-v\cdot \nabla \phi_t-(\gamma-1)\phi_t\text{div} v-(\gamma-1)\phi\text{div} v_t,$$
and the assumption (\ref{jizhu1}), one has,  for $0\leq t \leq T_1$, that
\begin{equation}\label{zhen7}
\begin{split}
|\phi_{tt}(t)|_2
\leq C\big(\|v_t\|_1(|\phi|_\infty+\|\nabla \phi\|_2)+\|  \phi_t\|_1\|v\|_2\big)\leq & Cc^3_4,\\
\int_0^t \|\phi_{tt}\|^2_1\text{d}s
\leq C\int_0^t \big(\|v_t\|^2_2(\|\nabla \phi\|_2+|\phi|_\infty)+\|v\|^2_3\|\phi_t\|^2_2\big) \text{d}s
\leq & Cc^2_0c^2_4.
\end{split}
\end{equation}
\end{proof}

\subsubsection{The a priori estimates for $\psi$.}
Next, we estimate $\psi$, which will be used to  deal with the degenerate elliptic operator.
\begin{lemma}\label{3} Let $(\phi,u,h)$ be the unique classical solution to (\ref{li4}) in $[0,T] \times \mathbb{R}^3$. Then
\begin{equation}\label{psi}
\begin{split}
&|\psi(t)|^2_\infty+\|\psi(t)\|^2_{D^1\cap D^2}\leq Cc^2_0,\quad |\psi_t(t)|_2\leq Cc^2_3,\quad |h_t(t)|^2_\infty\leq Cc^3_3c_4,\\
&|\psi_t(t)|^2_{D^1}+\int_0^t\big( |\psi_{tt}|^2_{2}+|h_{tt}|^2_{6}\big)\text{d}s\leq Cc^4_4,\quad \text{for}\quad  0\leq t \leq T_1.
\end{split}
\end{equation}
\end{lemma}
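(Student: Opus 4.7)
The plan is to exploit the symmetric hyperbolic structure of the equation satisfied by $\psi=\frac{a\delta}{\delta-1}\nabla h$. Applying $\frac{a\delta}{\delta-1}\nabla$ to $(\ref{li4})_3$ and commuting spatial derivatives produces
\begin{equation*}
\psi_t + \sum_{l=1}^{3}A_l(v)\partial_l\psi + (\nabla v)^\top\psi + a\delta\bigl(g\,\nabla\text{div}\,v + \nabla g\,\text{div}\,v\bigr)=0,
\end{equation*}
with the matrices $A_l(v)$ symmetric (the exact analogue of (\ref{kuzxc}) with $u$ replaced by $v$). The two source terms are the only new features compared with the analysis of \S3.2.1, and every factor involving $v$ or $g$ is controlled by the bootstrap (\ref{jizhu1}); the rest is standard hyperbolic energy estimation combined with short-time Gr\"onwall.

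First I would establish the $D^1\cap D^2$ bound on $\psi$ by applying $\nabla^k$ ($k=1,2$) to the above identity, testing with $\nabla^k\psi$, and using the symmetry of $A_l(v)$ to cancel the top-order transport contribution modulo $C|\nabla v|_\infty|\nabla^k\psi|_2^2$. Commutators from the transport and zeroth-order terms are handled by Lemma \ref{zhen1}, while the forcing requires bounding
\begin{equation*}
|\nabla^k(g\,\nabla\text{div}\,v)|_2+|\nabla^k(\nabla g\,\text{div}\,v)|_2
\end{equation*}
by products of $|g|_\infty$, $\|\nabla g\|_{D^1\cap D^2}$, and $\|v\|_3$, hence by a polynomial in $c_1$ and $c_3$. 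Gr\"onwall on $[0,T_1]$ with $T_1\le (1+c_4)^{-2}$ then yields $\|\psi(t)\|_{D^1\cap D^2}^2\le Cc_0^2$, and the $L^\infty$ bound follows from $H^2(\mathbb{R}^3)\hookrightarrow L^\infty(\mathbb{R}^3)$.

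With the spatial bounds in hand, $|\psi_t|_2$ and $|\psi_t|_{D^1}$ are read directly from the $\psi$-equation: the dominant contributions come from $|g|_\infty|\nabla^2 v|_2$ and $|g|_\infty|\nabla^3 v|_2$, giving $Cc_3^2$ and $Cc_4^4$ after squaring. For $|h_t|_\infty$, I would use $h_t=-v\cdot\nabla h-(\delta-1)g\,\text{div}\,v$ together with $|\nabla h|_\infty=C|\psi|_\infty\le Cc_0$ and Gagliardo--Nirenberg applied to $\text{div}\,v$ and $v$ to obtain $Cc_3^3c_4$. Finally, differentiating the $\psi$-equation in $t$,
\begin{equation*}
\psi_{tt}=-v_t\cdot\nabla\psi-v\cdot\nabla\psi_t-(\nabla v_t)^\top\psi-(\nabla v)^\top\psi_t-a\delta\nabla\bigl(g_t\,\text{div}\,v+g\,\text{div}\,v_t\bigr),
\end{equation*}
and its $L^2_tL^2$ norm is controlled by the pointwise bounds from the previous two steps together with the integrated controls $\int_0^t(|v_{tt}|_2^2+|v_t|_{D^2}^2)\,\mathrm{d}s\le c_4^2$ from (\ref{jizhu1}); the analogous identity for $h_{tt}$ delivers the $L^2_tL^6$ bound via Sobolev embedding.

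The main obstacle is one of bookkeeping rather than analysis: the source $a\delta g\,\nabla\text{div}\,v$ has the same top order as $\psi$ itself, so the $D^2$ estimate requires simultaneous control of $\nabla^3v$ and $\nabla^2 g$, and the tensor $(\nabla v)^\top\psi$ forces the Moser inequality to be used carefully so that $\psi$ is never differentiated more than twice. The constants $c_1,\dots,c_4$ in (\ref{jizhu1}) are tailored for exactly this coupling, and the choice $T_1\le(1+c_4)^{-2}$ is what converts the resulting Gr\"onwall estimate (whose a priori constant involves $c_4$) into a final bound depending only on $c_0$ at the leading $L^\infty_t$ order.
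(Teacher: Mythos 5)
Your overall scheme matches the paper's: write the symmetric hyperbolic system for $\psi=\frac{a\delta}{\delta-1}\nabla h$ obtained by differentiating $(\ref{li4})_3$, use the symmetry of $A_l(v)$ in the energy estimate, handle commutators via Lemma \ref{zhen1}, and close with Gr\"onwall on the short interval $T_1\le(1+c_4)^{-2}$; then read off the time-derivative bounds from the equation. However, there is a genuine gap in how you treat the source terms.

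You propose to bound $|\nabla^k(g\,\nabla\mathrm{div}\,v)|_2+|\nabla^k(\nabla g\,\mathrm{div}\,v)|_2$ ``by products of $|g|_\infty$, $\|\nabla g\|_{D^1\cap D^2}$, and $\|v\|_3$,'' and later you name $|g|_\infty|\nabla^2 v|_2$ and $|g|_\infty|\nabla^3 v|_2$ as the dominant contributions. This split does not work. The bootstrap hypotheses (\ref{jizhu1}) do \emph{not} control $|g|_\infty$: in the regularized problem, $g$ plays the role of $h=\phi^{2e}$ with $e<0$, so $|g|_\infty\sim\eta^{2e}\to\infty$ as $\eta\to 0$, and any constant that depends on $|g|_\infty$ is useless for the later limit. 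Note that (\ref{jizhu1}) instead bounds the \emph{weighted} quantities directly: $|g\nabla^2 v|_2\le c_3$, $|g\nabla^2 v|_{D^1}\le c_4$, and $\int_0^{T^*}|g\nabla^2 v|_{D^2}^2\,\mathrm{d}t\le c_4^2$. The estimate must be organized around these weighted norms---never splitting $g$ off as an $L^\infty$ factor. The paper's inequality (\ref{zhen2}) keeps $|g\nabla^2 v|_{D^1}$ and $|g\nabla\mathrm{div}\,v|_{D^2}$ intact for exactly this reason, and the same issue infects your estimate for $|h_t|_\infty$, where $|g\,\mathrm{div}\,v|_\infty$ must be handled by interpolating $g\,\mathrm{div}\,v$ between its $D^1$ and $D^2$ norms (each built from the weighted quantities), not via $|g|_\infty|\mathrm{div}\,v|_\infty$.

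A secondary, more minor point: the $L^\infty$ bound on $\psi$ cannot be deduced from ``$H^2\hookrightarrow L^\infty$'' because the lemma only controls $\|\psi\|_{D^1\cap D^2}$; the $L^2$ norm of $\psi$ itself is not uniformly bounded. The correct route is $\psi\in D^1\Rightarrow\psi\in L^6$ with $|\psi|_6\le C|\nabla\psi|_2$, then Gagliardo--Nirenberg $|\psi|_\infty\le C|\psi|_6^{1/2}|\nabla\psi|_6^{1/2}\le C\|\psi\|_{D^1\cap D^2}$, which uses only the homogeneous norms you actually have.
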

\begin{proof}Due to $\psi=\frac{a\delta}{\delta-1}\nabla h$  and  the equation $(\ref{li4})_3$, $\psi$ satisfies the following system:
\begin{equation}\label{kuzxclinear}
\psi_t+\sum_{l=1}^3 A_l(v) \partial_l\psi+B^*(v)\psi+a\delta \big(g\nabla \text{div} v+\nabla g \text{div} v\big)=0.
\end{equation}

First,  set $\varsigma=(\varsigma_1,\varsigma_2,\varsigma_3)^\top$ ($1\leq |\varsigma|\leq 2$ and $\varsigma_i=0,1,2$). Applying  $\partial_{x}^{\varsigma} $ to $(\ref{kuzxclinear})$,
multiplying by $2\partial_{x}^{\varsigma} \psi$ and then integrating over $\mathbb{R}^3$, one can get

\begin{equation}\label{zhenzhen}\begin{split}
\frac{d}{dt}|\partial_{x}^{\varsigma}  \psi|^2_2
\leq & \Big(\sum_{l=1}^{3}|\partial_{l}A_l|_\infty+|B^*|_\infty\Big)|\partial_{x}^{\varsigma}  \psi|^2_2+|\Theta_\varsigma |_2|\partial_{x}^{\varsigma}  \psi|_2,
\end{split}
\end{equation}
where
\begin{equation*}
\begin{split}
\Theta_\varsigma=\partial_{x}^{\varsigma} (B^*\psi)-B^*\partial_{x}^{\varsigma}  \psi+\sum_{l=1}^{3}\big(\partial_{x}^{\varsigma} (A_l \partial_l \psi)-A_l \partial_l\partial_{x}^{\varsigma}  \psi\big)+ a\delta \partial_{x}^{\varsigma}\big(g\nabla \text{div} v+\nabla g \text{div} v\big).
\end{split}
\end{equation*}

For $|\varsigma|=1$, it is easy  to obtain
\begin{equation}\label{zhen2}
\begin{split}
|\Theta_\varsigma |_2\leq &C\big(|\nabla^2 v|_2(|\psi|_\infty+|\nabla g|_\infty)+ |\nabla v|_\infty( |\nabla\psi|_2+|\nabla^2 g|_2)+|g\nabla^2 v|_{D^1}\big).
\end{split}
\end{equation}
Similarly, for $|\varsigma|=2$, one has
\begin{equation}\label{zhen2}
\begin{split}
|\Theta_\varsigma |_2\leq& C\big(|\nabla v|_\infty(|\nabla^2\psi|_2+|\nabla^3 g|_2)+|\nabla^2 v|_3(|\nabla\psi|_6+|\nabla^2 g|_6)\big)\\
&+C|\nabla^3 v|_2 (|\psi|_\infty+|\nabla g|_\infty)+C|g\nabla \text{div}v|_{D^2}.
\end{split}
\end{equation}
It follows from
(\ref{zhenzhen})-(\ref{zhen2}) and Gagliardo-Nirenberg inequality that
\begin{equation*}
\frac{d}{dt}\|\psi(t)\|_{D^1\cap D^2}\leq Cc_4\|\psi(t)\|_{D^1\cap D^2}+C|g\nabla \text{div}v|_{D^2}+C c^2_4,
\end{equation*}
which, along with the Gronwall's inequality,  implies that for $0\leq t \leq T_1$,
\begin{equation}\label{uu2}\begin{split}
\|\psi(t)\|_{D^1\cap D^2}\leq&  \Big(c_0+Cc^2_4t+C\int_0^t |g\nabla \text{div}v|_{D^2} \text{d}s\Big) \exp(Cc_4t)\leq Cc_0.
\end{split}
\end{equation}

Second, according to  equations $(\ref{kuzxclinear})$,
for  $0\leq t \leq T_1$, it holds that
\begin{equation}\label{uu3}\begin{cases}
|\psi_t(t)|_2\leq C\big(|v|_{\infty}| \psi|_{D^1}+|\nabla v|_2|\psi|_{\infty}+|g\nabla^2 v|_2+|\nabla g|_\infty |\nabla v|_2\big)\leq Cc^2_3,\\[10pt]
|\nabla \psi_t(t)|_{2}\leq C \big(\|v\|_{3}(\|\psi\|_{D^1\cap D^2}+\|\nabla g\|_{D^1\cap D^2})+|g\nabla^2 v|_{D^1}\big) \leq Cc^2_4.
\end{cases}
\end{equation}
Similarly, using
$$\psi_{tt}=-\nabla (v \cdot \psi)_t-a\delta \big(g\nabla \text{div} v+\nabla g \text{div} v\big)_t,$$
for $0\leq t \leq T_1$,  one gets
\begin{equation}\label{uu4}\begin{split}
\int_0^t |\psi_{tt}|^2_{2} \text{d}s
\leq& C\int_0^t \big(|v_t|^2_6|\nabla \psi|^2_3+|\nabla v|^2_\infty|\psi_t|^2_{2}+|v|^2_\infty|\nabla \psi_t|^2_2+|\psi|^2_\infty|\nabla v_t|^2_{2}\big) \text{d}s\\
&+\int_0^t\big(|(g \nabla \text{div} v)_t|^2_{2}+|\nabla g|^2_\infty|\nabla v_t|^2_2+|\nabla v|^2_\infty|\nabla g_t|^2_2\big) \text{d}s
\leq Cc^4_4.
\end{split}
\end{equation}

Finally, it follows from  (\ref{ine}) and (\ref{jizhu1}) that
\begin{equation}\label{qianline1}
\begin{split}
|g\text{div} v|_\infty\leq & C|g\text{div} v|^{\frac{1}{2}}_{D^1}|g\text{div} v|^{\frac{1}{2}}_{D^2}
\leq  C\big(|\nabla g|_\infty|\nabla v|_2+|g\nabla^2 v|_2\big)^{\frac{1}{2}}\\
&\cdot \big(|\nabla^2 g|_2|\nabla v|_\infty+|\nabla g|_\infty|\nabla^2 v|_2+|g\nabla^2v|_{D^1}\big)^{\frac{1}{2}}
\leq  Cc^{\frac{3}{2}}_3c^{\frac{1}{2}}_4.
\end{split}
\end{equation}
Then, together   with $(\ref{li4})_3$,  one gets easily   that for $0\leq t \leq T_1$,
\begin{equation}\label{qianlineY}
\begin{split}
|h_t(t)|_\infty\leq C(|v|_\infty|\psi|_\infty+|g\text{div}v|_\infty)\leq& Cc^{\frac{3}{2}}_3c^{\frac{1}{2}}_4,\\
\int_0^t |h_{tt}|^2_6\text{d}s\leq C\int_0^t \big(|v|_\infty|\psi_t|_6+|v_t|_6|\psi|_\infty+|g_{t}|_\infty|\nabla v|_6+|g\nabla v_t|_6\big)^2 \text{d}s\leq & Cc^4_4,
\end{split}
\end{equation}
where  one has used the fact that
\begin{equation*}
\begin{split}
|g\nabla v_t|_6\leq& C\big(|\nabla g|_\infty|\nabla v_t|_2+| g\nabla^2 v_t|_2)\\
\leq &C\big(|\nabla g|_\infty|\nabla v_t|_2+| (g\nabla^2 v)_t|_2+| g_t|_\infty|\nabla^2 v|_2)\leq Cc^2_4.
\end{split}
\end{equation*}
\end{proof}

\subsubsection{The a priori estimates for two $h$-related auxiliary variables.}

In order to obtain the uniformly a priori estimates and life span independent of the lower bound $\eta$ of $\phi_0$ for the solutions to the corresponding nonlinear problem, it  is helpful to give some more precise estimates for  another two new  $h$-related quantities $\varphi$ and $f$:
$$
\varphi=h^{-1},\quad f=\psi \varphi =\frac{a\delta}{\delta-1}\nabla h/h=(f^{(1)},f^{(2)},f^{(3)}).
$$
\begin{lemma}\label{2zx} Let $(\phi,u,h)$ be the unique classical solution to (\ref{li4}) in $[0,T] \times \mathbb{R}^3$. Then
\begin{equation}\label{diyizx}
\begin{split}
\|\varphi(t)\|^2_{D^{1,6}\cap D^{2,3}\cap D^3}+\|f(t)\|^2_{L^\infty\cap L^6\cap D^{1,3}\cap D^2}\leq & Cc^4_0,\\
h(t,x)>\frac{1}{2c_0},\quad \frac{2}{3}\eta^{-2e}<\varphi(t,x)<2|\varphi_0|_\infty\leq & 2c_0,\\
\|\varphi_t(t)\|^2_{L^6\cap D^{1,3}\cap D^2}+\|f_t(t)\|^2_{L^3\cap D^{1}}\leq & Cc^{10}_4,
\end{split}
\end{equation}
for $0\leq t \leq T_2=\min\{T_1, (1+Cc_4)^{-4}\}$.
\end{lemma}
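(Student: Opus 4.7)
The plan is to derive transport-type evolution equations for $\varphi$ and $f$ from $(\ref{li4})_3$, then successively extract pointwise bounds, higher Sobolev estimates, and finally time-derivative estimates. From $h_t+v\cdot\nabla h+(\delta-1)g\,\mathrm{div}\,v=0$, a direct computation using $\varphi=h^{-1}$ and $\nabla h=-\varphi^{-2}\nabla\varphi$ gives
\begin{equation*}
\varphi_t+v\cdot\nabla\varphi=(\delta-1)\,g\,(\mathrm{div}\,v)\,\varphi^{2},
\end{equation*}
while writing $f=\frac{a\delta}{\delta-1}\nabla\ln h$ and differentiating the equation for $\ln h$ yields
\begin{equation*}
f_t+v\cdot\nabla f+(\nabla v)^{\top}f+a\delta\,\nabla\!\bigl(g\,\varphi\,\mathrm{div}\,v\bigr)=0.
\end{equation*}
These are the two working equations from which everything else is extracted; note that they are purely transport-type (linear in the highest derivative), with source terms whose regularity has already been estimated in Lemmas \ref{2}--\ref{3} and via the hypothesis \eqref{jizhu1}.

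First I would establish the pointwise bounds. Along characteristics $\dot X=v(t,X)$, one has $\dot\varphi=(\delta-1)g(\mathrm{div}\,v)\varphi^{2}$, so a continuity/barrier argument using $|g\,\mathrm{div}\,v|_{\infty}\le Cc_3^{3/2}c_4^{1/2}$ (estimate \eqref{qianline1} in Lemma \ref{3}) gives, as long as $\varphi\le 2c_0$, the estimate $\dot\varphi\ge -4|1-\delta|c_0^{2}\,|g\mathrm{div}v|_\infty$, and symmetrically $\dot\varphi\le 4|1-\delta|c_0^{2}\,|g\mathrm{div}v|_\infty$. For $t\le T_2\le(1+Cc_4)^{-4}$ the increment is $o(c_0)$, so the upper bound $\varphi(t,x)\le 2|\varphi_0|_\infty\le 2c_0$ is propagated, while the initial positivity $\varphi_0\ge\eta^{-2e}$ (coming from $\phi_0>\eta$ and $e<0$) yields the lower bound $\varphi(t,x)\ge\tfrac{2}{3}\eta^{-2e}$. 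Inverting gives $h\ge 1/(2c_0)$ everywhere, which is the crucial $\eta$-independent lower bound on $h$.

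Next, for the higher Sobolev norms of $\varphi$, I would apply $\nabla^k$ to the $\varphi$-equation for $k=1,2,3$, test against $|\nabla^k\varphi|^{p-2}\nabla^k\varphi$ with $p=6,3,2$ respectively, and use Moser-Kato commutator estimates (Lemma \ref{zhen1}) to handle the convection $v\cdot\nabla\varphi$. The uniform $L^\infty$ bound on $\varphi$ reduces the quadratic right-hand side $(\delta-1)g(\mathrm{div}\,v)\varphi^2$ to a linear expression in $\varphi$, whose factors $g,\nabla g,\nabla^2 g, g\nabla^2 v, g\nabla^3 v$ are controlled by \eqref{jizhu1} and Lemma \ref{3}. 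A Gronwall argument then propagates the initial estimate $\|\varphi_0\|_{D^{1,6}\cap D^{2,3}\cap D^3}^2\le c_0^2$ (from \eqref{houmian}) to $\le Cc_0^4$ on $[0,T_2]$. For $f$, the same scheme applied to its transport equation works, with the commutator $(\nabla v)^{\top}f$ giving rise to harmless factors $|\nabla v|_\infty\le Cc_3$ after interpolation and all $g$-terms in $\nabla(g\varphi\mathrm{div}\,v)$ again estimated via \eqref{jizhu1}; the $L^\infty$ estimate on $f$ follows from the $L^6\cap D^{1,3}$ or $L^6\cap D^2$ bounds via Gagliardo-Nirenberg (Lemma \ref{lem2as}).

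Finally, the time-derivative estimates on $\varphi_t$ and $f_t$ are obtained by reading them directly off the two evolution equations: $\varphi_t=-v\cdot\nabla\varphi+(\delta-1)g(\mathrm{div}\,v)\varphi^{2}$ and the analogous expression for $f_t$, then using Hölder, Sobolev embedding, the just-obtained bounds on $\varphi,f$, and assumption \eqref{jizhu1} to absorb factors of $\|v\|_{3},\|\nabla g\|_{D^{1}\cap D^{2}},\|g_t\|_\infty$, etc. The power $c_4^{10}$ is the worst case, arising from multiplying four different $c_4$-controlled norms at second order. The main technical obstacle I anticipate is the $\varphi^{2}$ nonlinearity: controlling it requires that the $L^\infty$ bound $\varphi\le 2c_0$ is proved \emph{before} the Sobolev estimates, which forces the continuity argument and the restriction to $t\le(1+Cc_4)^{-4}$; moreover in the $D^3$ estimate on $\varphi$ and the $D^2$ estimate on $f$, one meets top-order terms of the form $g\nabla^3\mathrm{div}\,v$ and $\nabla(g\nabla\mathrm{div}\,v)$, which cannot be handled by bare regularity of $v$ and must be traded for $|g\nabla^{2}v|_{D^{2}}$, exactly the quantity retained in \eqref{jizhu1}.
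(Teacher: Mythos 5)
Your treatment of $f$ genuinely departs from the paper's: you propose to run energy estimates on the transport equation $f_t+v\cdot\nabla f+(\nabla v)^{\top}f+a\delta\nabla(g\varphi\,\mathrm{div}\,v)=0$ (which is the paper's (\ref{qian3linear})), whereas the paper never uses that equation in this proof — it simply writes $f=\psi\varphi$ and multiplies the bounds on $\psi$ from Lemma \ref{3} by the bounds on $\varphi$ from Step 1 (e.g.\ $|\nabla^2 f|_2\le C(|\varphi|_\infty|\nabla^2\psi|_2+|\psi|_6|\nabla^2\varphi|_3+|\nabla\varphi|_6|\nabla\psi|_3)$, and $f_t=\psi_t\varphi+\psi\varphi_t$ for the time derivative). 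The algebraic route is shorter and avoids re-confronting the top-order source $g\nabla^3\mathrm{div}\,v$, which your route must again absorb via $\int_0^t|g\nabla^2v|_{D^2}\,ds\le t^{1/2}c_4$ exactly as in the $\psi$-estimate; your route is viable but duplicates work already done in Lemma \ref{3}.

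There is, however, a genuine gap in your pointwise bounds on $\varphi$. Your barrier argument is additive: from $\dot\varphi=(\delta-1)g(\mathrm{div}\,v)\varphi^2$ and $\varphi^2\le 4c_0^2$ you get $|\varphi(t,X)-\varphi_0(x_0)|\le Cc_0^2\int_0^t|g\,\mathrm{div}\,v|_\infty\,ds$, a quantity that is small in absolute terms but \emph{independent of} $\eta$. Since $e<0$, the initial lower bound $\varphi_0\ge\eta^{-2e}=\eta^{(1-\delta)/(\gamma-1)}$ tends to $0$ as $\eta\to0$, so an $O((1+Cc_4)^{-2})$ additive decrement does not yield $\varphi>\tfrac23\eta^{-2e}$ — it does not even keep $\varphi$ positive for small $\eta$. (The upper bound $\varphi<2|\varphi_0|_\infty$ has the same defect, since $|\varphi_0|_\infty$ is only bounded above by $c_0$, not below.) What is needed is a \emph{multiplicative} control of the drift, and the equation supplies it for free: $h=\varphi^{-1}$ satisfies the linear ODE $\dot h=-(\delta-1)g\,\mathrm{div}\,v$ along characteristics, so integrating and inverting gives the paper's formula (\ref{qianline2}), $\varphi(t,X)=\varphi_0\bigl(1+(1-\delta)\varphi_0\int_0^t g\,\mathrm{div}\,v\,ds\bigr)^{-1}$, in which the perturbation $(1-\delta)\varphi_0\int_0^t g\,\mathrm{div}\,v\,ds$ is bounded by $Cc_0c_4^2T_2\le\tfrac13$ uniformly in $\eta$; this yields $\tfrac34\varphi_0\le\varphi\le\tfrac32\varphi_0$ and hence all three pointwise inequalities at once. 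The remainder of your scheme (energy estimates with Moser–Kato commutators, Gronwall with the $L^2_t$ top-order source, reading $\varphi_t,f_t$ off the equations) is sound once this is repaired.
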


\begin{proof}\textbf{Step 1:} Estimates on $\varphi$.
It is easy to see  that $\varphi$ satisfies the following equation
\begin{equation}\label{qianline1}
\varphi_t+v\cdot \nabla \varphi-(\delta-1)g\varphi^2\text{div} v=0.
\end{equation}

First, along with the particle path $X(t;x_0)$ defined by
\begin{equation}\begin{cases}
\label{particle1}
\frac{d}{ds}X(t;x_0)=v(s,X(t; x_0)),\quad  0\leq t\leq T;\\[8pt]
X(0;x_0)=x_0, \quad \qquad \qquad \ \ \quad   x\in \mathbb{R}^3,
\end{cases}
\end{equation}
one has
\begin{equation}\label{qianline2}
\begin{split}
\displaystyle
\varphi(t,X(t;x_0))=\varphi_0(x_0)\Big(1+(1-\delta)\varphi_0(x_0)\int_0^t g\text{div}v (s,X(s;x_0))\text{d}s\Big)^{-1},
\end{split}
\end{equation}
which, along with (\ref{qianline1}), implies that  for $T_2=\min\{T_1, (1+Cc_4)^{-4}\}$,
\begin{equation}\label{qianline3}
\begin{split}
\displaystyle
\frac{2}{3}\eta^{-2e}<\varphi(t,x)<2|\varphi_0|_\infty\leq 2c_0,\quad \text{for} \quad  [t,x]\in [0,T_2]\times \mathbb{R}^3.
\end{split}
\end{equation}

Second, by the standard energy estimates for transport equations, one can obtain
\begin{equation*}\begin{split}
\frac{d}{dt}|\nabla\varphi|_{6}\leq& CF(t)|\nabla\varphi|_{6}+C|\varphi|^2_\infty\big( |g\nabla^2 v|_{6}+ |\nabla v|_{\infty} |\nabla g|_6\big),\\[2pt]
\frac{d}{dt}|\nabla^2\varphi|_{3}\leq& CF(t)|\nabla^2\varphi|_{3}+C|\nabla \varphi|_6\big(|\nabla^2 v|_6+|\nabla \varphi|_6|g\text{div} v|_\infty\big)\\
&+C|\varphi|^2_\infty \big(|g\nabla^2 \text{div} v|_{3}+|\nabla^2 g|_{3}|\nabla v|_\infty+|\nabla g|_\infty|\nabla^2 v|_3\big)\\
&+C|\varphi|_\infty|\nabla \varphi|_6\big(|g \nabla^2 v|_6+|\nabla g|_\infty|\nabla v|_6\big),\\[2pt]
\frac{d}{dt}|\nabla^3\varphi|_{2}\leq& CF(t)|\nabla^3\varphi|_{2}+C\big(|\nabla \varphi|_6 |\nabla^3 v|_{3}+|\nabla^2 \varphi|_3|\nabla^2 v|_6\big)\\
&+C|\varphi|^2_\infty (|g\nabla^3\text{div} v|_{2}+|\nabla v|_\infty|\nabla^3 g|_2+|\nabla^2 g|_{6}|\nabla^2 v|_3+|\nabla g|_\infty|\nabla^3 v|_2)\\
&+C|g\text{div}v|_\infty|\nabla \varphi|_6|\nabla^2 \varphi|_3+C|\nabla \varphi|^2_6(|\nabla g|_\infty|\nabla v|_6+|g\nabla^2 v|_6)\\
&+C|\varphi|_\infty|\nabla^2 \varphi|_3(|\nabla g|_\infty|\nabla v|_6+|g\nabla^2 v|_6)\\
&+C|\varphi|_\infty|\nabla \varphi|_6(|\nabla g|_\infty|\nabla^2 v|_3+|\nabla^2 g|_6|\nabla v|_6+|g\nabla^2\text{div} v|_3),
\end{split}
\end{equation*}
where  $F(t)=|\nabla v|_{\infty}+|\varphi|_\infty|g\text{div}v|_\infty$.
It follows from the Gronwall's inequality that
$$
|\varphi(t)|^2_{D^{1,6}}+|\varphi(t)|^2_{D^{2,3}}+|\varphi(t)|^2_{D^3}\leq Cc^{2}_0,\quad \text{for}\ 0\leq t \leq T_2.
$$

Finally, due to the equation (\ref{qianline1}),
 for $0\leq t \leq T_2$, it holds that
\begin{equation}\label{zhen6zx}
\begin{split}
|\varphi_t(t)|_6\leq & C\big(|v|_\infty|\nabla \varphi|_6+|\varphi|^2_\infty|g\text{div} v|_6\big)\leq Cc^4_3,\\
|\nabla \varphi_t(t)|_{3}\leq & C\big(|v|_\infty|\nabla^2 \varphi|_{3}+|\nabla v|_6|\nabla \varphi|_6+|g\nabla v|_6|\nabla \varphi|_6|\varphi|_\infty\big)\\
&+C|\varphi|^2_\infty\big(|\nabla  g|_{\infty}|\nabla v|_3+|g\nabla^2 v|_3\big)\leq Cc^4_3,\\
|\nabla^2\varphi_t(t)|_{2}\leq & C\big(|v|_\infty|\nabla^3 \varphi|_{2}+|\nabla v|_6 |\nabla^2 \varphi|_3+|\nabla^2 v|_3|\nabla \varphi|_6\big)\\
&+C|\varphi|^2_\infty\big(|\nabla  g|_{\infty}|\nabla^2 v|_2+|g\nabla^3 v|_2+|\nabla v|_\infty|\nabla^2 g|_2\big)\\
&+C|\varphi|_\infty|\nabla \varphi|_6(|g\nabla^2 v|_3+|\nabla  g|_{\infty}|\nabla v|_3)\\
&+C|g\nabla v|_6 \big(|\nabla \varphi|^2_6+|\varphi|_\infty|\nabla^2 \varphi|_3\big)\leq Cc^4_4.
\end{split}
\end{equation}

\textbf{Step 2:} Estimates on $f$.
Based on the estimates obtained in Lemma \ref{3} and the above step, due to
$f=\psi \varphi$, it is easy to check that for $0\leq t \leq T_2$,
\begin{equation*}
\begin{split}
|f(t)|_\infty \leq& Cc^{2}_0,\ \
|f(t)|_6 \leq |\psi|_6 |\varphi|_\infty  \leq Cc^{2}_0,\\
 |\nabla f(t)|_3\leq&  C\big(|\nabla \varphi|_6|\psi|_6+|\varphi|_\infty|\nabla \psi|_3\big)\leq Cc^{2}_0,\\
|\nabla^2 f(t)|_2\leq&  C\big(|\varphi|_\infty |\nabla^2 \psi|_2+|\psi|_6|\nabla^2 \varphi|_3+|\nabla \varphi|_6|\nabla \psi|_3\big)
\leq Cc^{2}_0,\\
 |f_t(t)|_3\leq & C\big(|\psi|_{6}| \varphi_t|_{6}+|\varphi|_\infty|\psi_t|_{3}\big)(t)\leq Cc^{5}_4,\\
 |\nabla f_t(t)|_{2}\leq & C \big(|\psi|_{6}| \nabla\varphi_t|_{3}+|\varphi|_\infty|\nabla \psi_t|_{2}+|\nabla\psi|_{3}| \varphi_t|_{6}+|\nabla \varphi|_6|\psi_t|_{3}\big)(t) \leq Cc^{5}_4.
\end{split}
\end{equation*}
\end{proof}
It follows from the definition that $f$ satisfies the following equations:
\begin{equation}\label{qian3linear}
\begin{split}
&f_t+\sum_{l=1}^3 A_l(v) \partial_lf+B^*(v)f+a\delta \big(g \varphi \nabla \text{div} v+\varphi\nabla g \text{div}v\big) -(\delta-1)g \varphi f\text{div}v=0.
\end{split}
\end{equation}

\subsubsection{The a priori estimates for $u$.}
Based on the estimates of $\phi$ and $h$ obtained in Lemmas \ref{2}-\ref{2zx},  we are now ready to give  the lower order estimates for  the velocity $u$ as follows.

 \begin{lemma}\label{4} Let $(\phi,u,h)$ be the unique classical solution to (\ref{li4}) in $[0,T] \times \mathbb{R}^3$. Then
\begin{equation}\label{uu}
\begin{split}
|\sqrt{h}\nabla u(t)|^2_2+\| u(t)\|^2_{1}+\int_{0}^{t} \Big( \|\nabla u\|^2_{1}+|u_t|^2_{2}\Big)\text{d}s \leq& Cc^{4}_0,\\
(|u|^2_{D^2}+|h \nabla^2 u|^2_2+|u_t|^2_{2})(t)+
\int_{0}^{t} \Big( |u|^2_{D^3}+|u_t|^2_{D^1})\text{d}s \leq& Cc^{9}_2c_3,
\end{split}
\end{equation}
for $0 \leq t \leq T_3=\min(T_2,(1+c_4)^{-14})$.
 \end{lemma}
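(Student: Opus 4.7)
\emph{Plan.} The strategy is the standard energy method for the momentum equation in $(\ref{li4})_2$, combined with elliptic regularity for the Lam\'e operator. By Lemma \ref{2zx}, $h(t,x)\geq (2c_0)^{-1}$ on $[0,T_2]\times\mathbb{R}^3$, so the coefficient $a\sqrt{h^2+\epsilon^2}\geq a(2c_0)^{-1}$ in front of $Lu$ is uniformly positive with a bound depending only on $c_0$. Hence the linearized system for $u$ is effectively uniformly parabolic, and the a priori bounds on $\phi,\psi,\varphi,f,h$ from Lemmas \ref{2}--\ref{2zx}, together with the bootstrap hypotheses \eqref{jizhu1} on $(v,g)$, supply all the ingredients to close the estimates independently of $(\epsilon,\eta)$.

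\textbf{Step 1 (basic $L^2$--$H^1$ bound).} Test $(\ref{li4})_2$ against $u$ and integrate by parts in the viscous term to obtain
\begin{equation*}
\tfrac{1}{2}\frac{d}{dt}|u|_2^2 + a\alpha\!\int\!\sqrt{h^2+\epsilon^2}|\nabla u|^2 + a(\alpha+\beta)\!\int\!\sqrt{h^2+\epsilon^2}(\text{div}\,u)^2 = R_1,
\end{equation*}
where $R_1$ collects the cross term $\int a\nabla\sqrt{h^2+\epsilon^2}\cdot Q(u)\cdot u$ and the forcing $-\int(v\cdot\nabla v + \nabla\phi - \psi\cdot Q(v))\cdot u$. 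Using $\sqrt{h^2+\epsilon^2}\geq h\geq (2c_0)^{-1}$ and $2\alpha+3\beta\geq 0$, the dissipation controls $|\sqrt{h}\nabla u|_2^2$; the cross term is absorbed via $|\nabla\sqrt{h^2+\epsilon^2}|\leq|\nabla h|\leq C|\psi|_\infty\leq Cc_0$ (Lemma \ref{3}) and Young's inequality. The forcing is handled by H\"older, \eqref{diyi}, \eqref{psi}, and \eqref{jizhu1}, and Gronwall's inequality yields the $L^2$ part of the first displayed bound. A parallel argument applied to the differentiated equation (equivalently, testing against $-\Delta u$) produces $\|\nabla u(t)\|_1^2+\int_0^t\|\nabla u\|_1^2\,ds$; the integrated $\int_0^t|u_t|_2^2\,ds$ follows by reading $u_t$ off the equation and invoking the $D^2$-bound established in Step 2.

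\textbf{Step 2 ($u_t(0)$, $u_t$ evolution, and $D^2/D^3$).} At $t=0$ the compatibility condition $(\ref{th78zxq})_2$ reads $aLu_0=g_2/h_0$, so
\begin{equation*}
a\sqrt{h_0^2+\epsilon^2}\,Lu_0 = g_2\sqrt{1+\epsilon^2/h_0^2},
\end{equation*}
whose $L^2$-norm is $\leq 2|g_2|_2$ provided $\epsilon\leq\eta^{2e}$ (a constraint we may impose before sending $\epsilon\to 0$ in \S3.4). Combined with H\"older bounds on the remaining terms in $(\ref{li4})_2|_{t=0}$, this gives $|u_t(0)|_2\leq Cc_0^{2}$. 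Next, differentiate $(\ref{li4})_2$ in $t$, test against $u_t$, and integrate by parts to obtain
\begin{equation*}
\tfrac{1}{2}\frac{d}{dt}|u_t|_2^2 + c|\sqrt{h}\nabla u_t|_2^2 \leq F(t),
\end{equation*}
where $F(t)$ involves $(v\cdot\nabla v)_t$, $\nabla\phi_t$, $(\sqrt{h^2+\epsilon^2})_t\,Lu$, and $(\psi\cdot Q(v))_t$; each is controlled by \eqref{jizhu1} and the bounds on $|h_t|_\infty$, $|\psi_t|_2$, $|\phi_t|_{D^1}$ from Lemmas \ref{2}--\ref{3}. Gronwall then yields the bound on $|u_t(t)|_2^2 + \int_0^t|u_t|_{D^1}^2\,ds$. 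For the spatial $D^2$ piece, rewrite $(\ref{li4})_2$ as the Lam\'e equation $a\sqrt{h^2+\epsilon^2}\,Lu=W$ with $W:=\psi\cdot Q(v)-u_t-v\cdot\nabla v-\nabla\phi$; by the lower bound on $\sqrt{h^2+\epsilon^2}$ one has $|Lu|_2\leq Cc_0|W|_2$, and Lemma \ref{zhenok} gives $|u|_{D^2}\leq C|Lu|_2$. The bound on $|h\nabla^2 u|_2$ follows from the identity $aL(hu)=ahLu+(\text{commutators involving }\nabla h,\nabla^2 h)$ combined with the far-field decay $hu\to 0$ at infinity, where the commutators are controlled through $a\nabla h=\tfrac{\delta-1}{\delta}\psi$. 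Finally, $\int_0^t|u|_{D^3}^2\,ds$ comes from applying $\nabla$ to the Lam\'e equation and invoking Lemma \ref{zhenok} once more.

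\emph{Main obstacle.} The delicate point is keeping every constant independent of the regularizers $(\epsilon,\eta)$. This is achieved by systematically converting derivatives of the Lam\'e coefficient $\sqrt{h^2+\epsilon^2}$ into derivatives of $\psi$ via $a\nabla h=\tfrac{\delta-1}{\delta}\psi$, so that all cross and commutator terms are bounded by the $H^2$-norm of $\psi$ from Lemma \ref{3} (which is $\eta,\epsilon$-independent). The smoothing perturbation $\sqrt{h^2+\epsilon^2}-h=O(\epsilon^2/h)$ is a lower-order correction, harmless once $\epsilon\leq\eta^{2e}$ and disappearing in the limit $\epsilon\to 0$ of \S3.4.
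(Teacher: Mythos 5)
The proposal has a genuine gap centered on the pointwise weighted bound $|\sqrt{h}\nabla u(t)|_2^2$, the very first quantity in \eqref{uu}. You propose in Step 1 to obtain $|\nabla u(t)|_2^2$ pointwise by testing $(\ref{li4})_2$ against $-\Delta u$, but that is the unweighted norm. Passing to the weighted one via $|\sqrt{h}\nabla u(t)|_2^2\leq|h|_\infty|\nabla u(t)|_2^2$ would require $|h|_\infty$ to be bounded uniformly in $\eta$, and that is false: with $e<0$ the a priori estimates \eqref{diyizx} only give $\varphi=h^{-1}>\tfrac{2}{3}\eta^{-2e}$, i.e.\ $h<\tfrac{3}{2}\eta^{2e}\to\infty$ as $\eta\to 0$. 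The paper's Step 2 instead multiplies $(\ref{li4})_2$ by $u_t$, which puts the \emph{weighted} Dirichlet energy $\tfrac{d}{dt}\big(a\alpha|(h^2+\epsilon^2)^{1/4}\nabla u|_2^2+\cdots\big)+|u_t|_2^2$ directly on the left (see \eqref{zhu6}--\eqref{shan1}). That single computation yields $|\sqrt{h}\nabla u(t)|_2^2$ pointwise and $\int_0^t|u_t|_2^2\,ds$ simultaneously, with no recourse to any upper bound on $h$, and it is not interchangeable with the $-\Delta u$ test. The choice also matters quantitatively: the first display must carry the sharp constant $Cc_0^4$ (so that, via \eqref{dingyi45}, it sits at the $c_2^2$ level needed for the bootstrap \eqref{jizhu1}). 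Your alternative of "reading $u_t$ off the equation and invoking the $D^2$-bound from Step 2" would have to go through $|h\nabla^2 u|_2\leq Cc_2^{7/2}c_3^{1/2}$, inheriting a constant at the $c_3^2$ level and breaking the hierarchy that the iteration depends on.

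A smaller point: the constraint $\epsilon\leq\eta^{2e}$ in your Step 2 runs the wrong way. Because $e<0$ and $\phi_0>\eta$, $\eta^{2e}$ is an \emph{upper} bound on $h_0=\phi_0^{2e}$, not a lower one, so $\epsilon\leq\eta^{2e}$ does not control $\epsilon/h_0$. The lower bound on $h_0$ is supplied by \eqref{houmian}, namely $h_0\geq c_0^{-1}$, so $\epsilon\leq c_0^{-1}$ would be the right, $\eta$-independent, choice. The paper in fact imposes no such condition: it splits $|a\sqrt{h_0^2+\epsilon^2}Lu_0|_2\leq a|h_0Lu_0|_2+a\epsilon|Lu_0|_2\leq|g_2|_2+C\epsilon\|u_0\|_3$ (see \eqref{zhu15vb}), both terms being controlled for any small $\epsilon$. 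Apart from these two points, the remaining ingredients of your plan — the $u$-test for $|u|_2^2$, the time-differentiated $u_t$-test for $|u_t(t)|_2$, the elliptic estimate applied to $L(\sqrt{h^2+\epsilon^2}u)$ with commutators controlled through $\nabla h=\tfrac{\delta-1}{a\delta}\psi$ — do match the paper's argument.
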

\begin{proof} \textbf{Step 1:} Estimate on  $|u|_2$. Multiplying  $(\ref{li4})_2$ by $u$ and integrating over $\mathbb{R}^3$, one gets from Gagliardo-Nirenberg inequality, H\"older's inequality and Young's inequality that
\begin{equation}\label{zhu1}
\begin{split}
&\frac{1}{2} \frac{d}{dt}|u|^2_2+a\alpha |(h^2+\epsilon^2)^{\frac{1}{4}}\nabla u|^2_2+a(\alpha+\beta)|(h^2+\epsilon^2)^{\frac{1}{4}}\text{div} u|^2_2\\
=&-\int \big(v\cdot \nabla v +\nabla \phi  +a\nabla \sqrt{h^2+\epsilon^2}\cdot Q(u) -\psi \cdot Q(v) \big)\cdot u \\
\leq & C\big(|v|_\infty|\nabla v|_2+ |\nabla\phi|_2+ |\psi|_\infty |\sqrt{h}\nabla u|_2|\varphi|^{\frac{1}{2}}_\infty+ |\psi|_\infty|\nabla v|_2\big)|u|_2\\
\leq& Cc^{3}_3|u|^2_2+Cc_3+\frac{1}{2}a\alpha |\sqrt{h}\nabla u|^2_2,
\end{split}
\end{equation}
which, along with the Gronwall's inequality,  implies immediately that  for $0 \leq t \leq T_2$,
\begin{equation}\label{zhu5}
\begin{split}
|u(t)|^2_2+\frac{a \alpha}{2}\int_0^t |\sqrt{h}\nabla u|^2_2\text{d}s\leq  C\big(|u_0|^2_2+c_3 t\big)\exp(Cc^{3}_3t)\leq Cc^2_0.
\end{split}
\end{equation}

\textbf{Step 2:} Estimate on $|\nabla u|_2$. Multiplying $ (\ref{li4})_2$ by $u_t$ and integrating over $\mathbb{R}^3$, one gets  from the Gagliardo-Nirenberg, H\"older's and Young's inequalities that
\begin{equation}\label{zhu6}
\begin{split}
&\frac{1}{2} \frac{d}{dt}\Big(a\alpha|(h^2+\epsilon^2)^{\frac{1}{4}}\nabla u|^2_2+a(\alpha+\beta)|(h^2+\epsilon^2)^{\frac{1}{4}}\text{div} u|^2_2\Big)+| u_t|^2_2\\
=&-\int \big(v\cdot \nabla v +\nabla \phi +a\nabla \sqrt{h^2+\epsilon^2}\cdot Q(u) +\psi \cdot Q(v)\big)\cdot u_t \\
&-\int a \frac{h}{\sqrt{h^2+\epsilon^2}}h_t\big(\alpha|\nabla u|^2+(\alpha+\beta)|\text{div}u|^2\big)  \\
\leq & C\big(|v|_\infty|\nabla v|_2+  | \nabla \phi|_2+ |\psi|_\infty |\sqrt{h}\nabla u|_2|\varphi|^{\frac{1}{2}}_\infty+|\psi|_\infty|\nabla v|_2\big)|u_t|_2\\
&+C|h_t|_\infty|\varphi|_\infty|\sqrt{h}\nabla u|^2_2
\leq Cc^{3}_4|\sqrt{h}\nabla u|^2_2+Cc^4_3+\frac{1}{2}| u_t|^2_2,
\end{split}
\end{equation}
which, along with the Gronwall's inequality,   implies that  for $0\leq t\leq T_2$,
\begin{equation}\label{shan1}
\begin{split}
|\sqrt{h}\nabla u(t)|^2_2+\int_0^t| u_t|^2_2\text{d}s
\leq C(c^2_0+c^{4}_3t)\exp (Cc^{3}_4t)\leq Cc^2_0, \ \text{and} \   |u(t)|_{D^1} \leq  Cc^{\frac{3}{2}}_0.
\end{split}
\end{equation}

It follows from the definitions of the Lam$\acute{ \text{e}}$ operator $L$ and  $\psi$ that
\begin{equation}\label{xintuo}
\begin{split}
a L(\sqrt{h^2+\epsilon^2} u)=&a \sqrt{h^2+\epsilon^2} Lu- G(\nabla \sqrt{h^2+\epsilon^2},u)\\
=&-u_t-v\cdot\nabla v -\nabla \phi+\psi\cdot Q(v) -G(\nabla \sqrt{h^2+\epsilon^2},u).
\end{split}
\end{equation}
Then Lemma \ref{zhenok} implies that
\begin{equation}\label{gai11}
\begin{split}
|\sqrt{h^2+\epsilon^2}u(t)|_{D^2}\leq& C\big(\big|u_t+v\cdot\nabla v +\nabla \phi-\psi\cdot Q(v)|_2+|G(\nabla \sqrt{h^2+\epsilon^2},u)\big|_2\big)\\
\leq & C\big(|u_t|_2+c^{\frac{7}{2}}_2c^{\frac{1}{2}}_3\big), \\
 |\sqrt{h^2+\epsilon^2}\nabla^2u(t)|_2
\leq & C(|\sqrt{h^2+\epsilon^2} u|_{D^2}+|\nabla \psi|_3 |u|_6+|\psi|_\infty| \nabla u|_2|+|\psi|^2_\infty|  u|_2|\varphi|_\infty)\\
\leq  &C(|\sqrt{h^2+\epsilon^2} u|_{D^2}+c^4_0\big),
\end{split}
\end{equation}
for $0\leq t \leq T_2$, where one has used the fact that
\begin{equation*}\begin{split}
|v\cdot\nabla v|_2\leq& C|v|_6|\nabla v|_3\leq C|\nabla v|^{\frac{3}{2}}_2|\nabla^2 v|^{\frac{1}{2}}_2,\ \ |\nabla \psi|_3| u|_6 \leq Cc_0|\nabla u|_2 \leq Cc^{\frac{5}{2}}_0.
\end{split}
\end{equation*}
According to (\ref{shan1}) and  (\ref{gai11}), one gets
\begin{equation*}
\displaystyle
\int_0^{t} \big( |h\nabla^2u|^2_{2} + | \nabla^2u|^2_{2}\big)\text{d}s\leq Cc^{4}_0,\quad \text{for} \quad 0\leq t \leq T'=\min(T_2,(1+c_4)^{-8}).
\end{equation*}

\textbf{Step 3:} Estimate on $|u|_{D^2}$. First, applying $\partial_t$ to  $(\ref{li4})_2$ yields
\begin{equation}\label{zhu7}
\begin{split}
u_{tt}+a \sqrt{h^2+\epsilon^2} Lu_t=-(v\cdot\nabla v)_t -\nabla \phi_t-\frac{ah}{\sqrt{h^2+\epsilon^2} } h_t Lu+(\psi\cdot Q(v))_t.
\end{split}
\end{equation}
Second, multiplying (\ref{zhu7}) by $u_t$ and integrating over $\mathbb{R}^3$ lead to
\begin{equation}\label{zhu8}
\begin{split}
&\frac{1}{2} \frac{d}{dt}|u_t|^2_2+a\alpha|(h^2+\epsilon^2)^{\frac{1}{4}}\nabla u_t|^2_2+a(\alpha+\beta)|(h^2+\epsilon^2)^{\frac{1}{4}}\text{div} u_t|^2_2\\
=&\int  \Big(-(v\cdot \nabla v)_t -\nabla \phi_t -a\nabla \sqrt{h^2+\epsilon^2} \cdot Q(u)_t\\
&-\frac{ah}{\sqrt{h^2+\epsilon^2} }  h_t Lu+(\psi\cdot Q(v))_t \Big)\cdot u_t \\
\leq& C\big(|v|_{\infty}|\nabla v_t|_{2}+|v_t|_2 |\nabla v |_\infty +|\nabla \phi_t|_2+|\psi|_\infty |\sqrt{h}\nabla u_t|_2|\varphi|^{\frac{1}{2}}_\infty\big)|u_t|_2\\
& + C\big(|h_t|_\infty|\nabla^2 u|_2 +|\psi|_\infty|\nabla v_t|_2+|\psi_t|_2 |\nabla v|_\infty\big) |u_t|_2.
\end{split}
\end{equation}

Integrating (\ref{zhu8}) over $(\tau,t)$ $(\tau \in( 0,t))$ and using  Young's inequality,  one has
\begin{equation}\label{zhu13}
\begin{split}
&\frac{1}{2}|u_t(t)|^2_2+\frac{a\alpha}{2}\int_\tau^t |\sqrt{h}\nabla u_t(s)|^2_2 \text{d}s\\
\leq & \frac{1}{2}|u_t(\tau)|^2_2+Cc^{4}_4\int_0^t  |u_t(s)|^2_2 \text{d}s+Cc^{2}_4t+Cc^{4}_0,\quad  \text{for} \quad  0 \leq t \leq T'.
\end{split}
\end{equation}

It follows from the momentum equations $ (\ref{li4})_2$ that
\begin{equation}\label{zhu15}
\begin{split}
|u_t(\tau)|_2\leq C\big( |v|_\infty |\nabla v|_2+|\nabla \phi|_2+|(h+\epsilon)L u|_{2}+|\psi|_\infty|\nabla v|_2\big)(\tau),
\end{split}
\end{equation}
which, along with the assumption (\ref{vg}), Lemma \ref{lem1} and (\ref{houmian})-(\ref{g123definition}), implies that
\begin{equation}\label{zhu15vb}
\begin{split}
\lim \sup_{\tau\rightarrow 0}|u_t(\tau)|_2
\leq &C\big( |v_0|_\infty |\nabla v_0|_2+|\nabla \phi_0|_2+|g_2|_{2}+|Lu_0|_2+|\psi_0|_\infty|\nabla v_0|_2\big)\\
\leq & Cc^2_0.
\end{split}
\end{equation}

Letting $\tau\rightarrow 0$ in (\ref{zhu13}), one gets from the Gronwall's inequality that for $0 \leq t \leq T'$,
\begin{equation}\label{zhu14}
\begin{split}
&|u_t(t)|^2_2+\int_0^t|\sqrt{h}\nabla u_t|^2_2\text{d}s
\leq  C(c^{2}_4t+c^{4}_0)\exp\big(Cc^{4}_4 t\big)\leq Cc^{4}_0, \ \   \int_0^t|\nabla u_t|^2_2\text{d}s\leq Cc^5_0.
\end{split}
\end{equation}

It follows from (\ref{gai11}) that for $0 \leq t \leq T'$,
\begin{equation}\label{zhu14we}
\begin{split}
|\sqrt{h^2+\epsilon^2}u(t)|_{D^2}\leq & Cc^{\frac{7}{2}}_2c^{\frac{1}{2}}_3,\ \
|h\nabla^2u(t)|_2 \leq Cc^{\frac{7}{2}}_2c^{\frac{1}{2}}_3,\quad |u(t)|_{D^2} \leq Cc^{\frac{9}{2}}_2c^{\frac{1}{2}}_3.
\end{split}
\end{equation}
By the classical estimates for elliptic systems in Lemma \ref{zhenok} and (\ref{xintuo}), one gets
\begin{equation}\label{jiabiao}
\begin{split}
|\sqrt{h^2+\epsilon^2} u(t)|_{D^3}\leq &C \big(|u_t+v\cdot\nabla v+\nabla \phi-\psi\cdot Q(v)|_{D^1}+|G(\nabla \sqrt{h^2+\epsilon^2},u)|_{D^1}\big)\\
\leq & C\big(|u_t|_{D^1}+c^{6}_3\big),\\
 |\sqrt{h^2+\epsilon^2}\nabla^3u(t)|_{2}
\leq & C\big(|\sqrt{h^2+\epsilon^2} u(t)|_{D^3} +\|u\|_2\|\psi\|_{D^1\cap D^2}\big)\\
&+C\|u\|_1(1+\|\psi\|^3_{D^1\cap D^2})(1+|\varphi|^2_{\infty})\\
\leq &  C\big(|\sqrt{h^2+\epsilon^2} u(t)|_{D^3}+c^{7}_3\big),
\end{split}
\end{equation}
which, along with (\ref{zhu14})-(\ref{zhu14we}), implies that,
\begin{equation*}
\displaystyle
\int_0^{t}\big(
|h\nabla^3u|^2_{2}
+
|h\nabla^2u|^2_{D^1}
+
|u|^2_{D^3}\big)\text{d}s \leq Cc^{7}_0,
\end{equation*}
for $ 0\leq t \leq T_3=\min(T',(1+c_4)^{-14})=\min(T_2,(1+c_4)^{-14})$.

\end{proof}

Next some estimates on the higher order derivatives of the velocity $u$ are established  in the following two  lemmas.
\begin{lemma}\label{5zx} Let $(\phi,u,h)$ be the unique classical solution to (\ref{li4}) in $[0,T] \times \mathbb{R}^3$. Then,
\begin{equation}\label{lipan1}
\begin{split}
(|\sqrt{h}\nabla u_t|^2_2+|u_t|^2_{D^1}+|u|^2_{D^3}+|h\nabla^2 u|^2_{D^1})(t)+\int_{0}^{t}|u_t|^2_{D^{2}}\text{d}s\leq& Cc^{16}_3, \\
\int_{0}^{t}\Big(|u_{tt}|^2_{2}+|u|^2_{D^{4}}+|h\nabla^2u|^2_{D^2}+|(h\nabla^2 u)_t|^2_{2}\Big)\text{d}s\leq& Cc^{11}_0,
\end{split}
\end{equation}
for $0\leq t \leq T_4=\min (T_3, (1+c_4)^{-20})$.
 \end{lemma}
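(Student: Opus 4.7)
The plan is to push the energy method of Step~3 of Lemma~\ref{4} one derivative higher, using the same three-step template: (i) a time-differentiated energy identity tested against $u_{tt}$ to get $|\sqrt{h}\nabla u_t|_2$ and $\int|u_{tt}|_2^2$; (ii) elliptic regularity (Lemma~\ref{zhenok}) applied to $aL(\sqrt{h^2+\epsilon^2}u)$ and its first-order derivatives to convert control of $u_t$ into control of $|u|_{D^3},|u|_{D^4},|h\nabla^2 u|_{D^1},|h\nabla^2 u|_{D^2}$; (iii) differentiating $h\nabla^2 u$ in time to recover $|(h\nabla^2u)_t|_2$. Concretely, differentiating $(\ref{li4})_2$ in $t$ yields
\begin{equation*}
u_{tt}+a\sqrt{h^2+\epsilon^2}Lu_t=-(v\cdot\nabla v)_t-\nabla\phi_t-\tfrac{ah}{\sqrt{h^2+\epsilon^2}}h_tLu+(\psi\cdot Q(v))_t=:R_1,
\end{equation*}
and testing against $u_{tt}$, after integration by parts in the viscous term, produces
\begin{equation*}
\tfrac{1}{2}\tfrac{d}{dt}\bigl(a\alpha|(h^2+\epsilon^2)^{1/4}\nabla u_t|_2^2+a(\alpha+\beta)|(h^2+\epsilon^2)^{1/4}\mathrm{div}\,u_t|_2^2\bigr)+|u_{tt}|_2^2=\text{RHS},
\end{equation*}
where the RHS involves $|v_{tt}|_2,|v\cdot\nabla v_t|_2,|v_t\cdot\nabla v|_2,|\nabla\phi_t|_2,|h_t|_\infty|Lu|_2,|\psi_t|_2|\nabla v|_\infty,|\psi|_\infty|\nabla v_t|_2$ and a boundary term from $\nabla\sqrt{h^2+\epsilon^2}$ (bounded by $|\psi|_\infty|\varphi|_\infty^{1/2}|\sqrt{h}\nabla u_t|_2$). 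All such factors are already controlled by the assumption (\ref{jizhu1}) and Lemmas~\ref{2}--\ref{2zx}, with one borrowed factor of $|\sqrt{h}\nabla u_t|_2$ or $|u_{tt}|_2$ absorbed via Young's inequality.

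To initialise the Gronwall argument I need $|\sqrt{h_0}\nabla u_t(0)|_2<\infty$. Evaluating $(\ref{li4})_2$ at $t=0$,
\begin{equation*}
\sqrt{h_0}\nabla u_t(0)=-\sqrt{h_0}\nabla(u_0\cdot\nabla u_0)-\sqrt{h_0}\nabla^2\phi_0-\sqrt{h_0}\nabla\!\bigl(a(h_0^2+\epsilon^2)^{1/2}Lu_0\bigr)+\sqrt{h_0}\nabla(\psi_0\cdot Q(u_0)),
\end{equation*}
whose critical third summand is, modulo an $\epsilon$-correction controlled by (\ref{gai11qq}), just $\phi_0^e\nabla(a\phi_0^{2e}Lu_0)=g_3\in L^2$ by the compatibility condition $(\ref{th78zxq})_3$ (this is precisely the role of $g_3$ flagged in Remark~\ref{rxiangrong}); the remaining summands are bounded by (\ref{houmian}) and (\ref{gai11qqcv}). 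Gronwall then delivers $|\sqrt{h}\nabla u_t|_2^2+\int_0^t|u_{tt}|_2^2\,ds\leq Cc^{16}_3$ on a time interval of the announced form $[0,T_4]$, and a routine bound on $|\nabla u_t|_2^2=|\varphi|_\infty\cdot|\sqrt{h}\nabla u_t|_2^2$ follows.

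For $|u|_{D^3},|u|_{D^4}$ and $|h\nabla^2 u|_{D^1},|h\nabla^2 u|_{D^2}$ I would apply Lemma~\ref{zhenok} to
\begin{equation*}
aL(\sqrt{h^2+\epsilon^2}u)=-u_t-v\cdot\nabla v-\nabla\phi+\psi\cdot Q(v)-G(\nabla\sqrt{h^2+\epsilon^2},u)=:W,
\end{equation*}
obtaining $|\sqrt{h^2+\epsilon^2}u|_{D^{k+2}}\leq C|W|_{D^k}$ for $k=1,2$, and similarly for $u_t$ (starting from $aL(\sqrt{h^2+\epsilon^2}u_t)=-u_{tt}+R_1+\text{(lower-order)}$) to yield $|u_t|_{D^1}$ pointwise and $\int|u_t|_{D^2}^2\,ds$. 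The pointwise $|u|_{D^{k+2}}$ and $|h\nabla^2 u|_{D^k}$ follow from the commutator identity $\sqrt{h^2+\epsilon^2}\nabla^k u=\nabla^k(\sqrt{h^2+\epsilon^2}u)-[\nabla^k,\sqrt{h^2+\epsilon^2}]u$, estimated using the $L^\infty,L^6,D^{1,3},D^2$ bounds on $\psi$ (Lemma~\ref{3}) and the $L^\infty,D^{1,6},D^{2,3},D^3$ bounds on $\varphi$ (Lemma~\ref{2zx}). Finally $|(h\nabla^2 u)_t|_2\leq|h_t|_\infty|\nabla^2 u|_2+|h|_\infty^{1/2}|\sqrt{h}\nabla^2 u_t|_2$, the latter factor being obtained by one more elliptic estimate on the $u_t$ equation.

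The main obstacle is the careful bookkeeping of the commutator $G(\nabla\sqrt{h^2+\epsilon^2},u)$ and its derivatives, which couple $\psi,\nabla\psi,\nabla^2\psi$ to high-order derivatives of $u$: every term must be distributed via H\"older and Gagliardo-Nirenberg (Lemma~\ref{lem2as}) and Lemma~\ref{zhen1} so that no factor appears that has not already been estimated, and crucially no constant depending on $\eta^{-1}$ enters. The uniform $L^\infty$ bound $\varphi\leq 2c_0$ of Lemma~\ref{2zx} is what keeps the degenerate/singular factor $\sqrt{h^2+\epsilon^2}$ harmless as $\eta,\epsilon\downarrow 0$, and the quadratic appearance of $\psi$ in the commutator is absorbed because $|\psi|_{D^1\cap D^2}\leq Cc_0$ is time-uniform. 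Once this accounting is tight, closing the bound on the chosen $T_4=\min(T_3,(1+c_4)^{-20})$ is straightforward.
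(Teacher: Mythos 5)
Your proposal follows the same route as the paper's proof: the time-differentiated momentum equation tested against $u_{tt}$, initialization of the Gronwall via the compatibility datum $g_3$ (with the $\epsilon$-correction), elliptic regularity from Lemma~\ref{zhenok} applied to $aL(\sqrt{h^2+\epsilon^2}u)$ and to $aL(\sqrt{h^2+\epsilon^2}u_t)$, and commutator bookkeeping with $G(\nabla\sqrt{h^2+\epsilon^2},\cdot)$ to recover $|u|_{D^3},|u|_{D^4},|h\nabla^2u|_{D^1},|h\nabla^2u|_{D^2}$ and $|(h\nabla^2u)_t|_2$. One cosmetic slip: after a single $\partial_t$ the convective term becomes $(v\cdot\nabla v)_t=v_t\cdot\nabla v+v\cdot\nabla v_t$, so $|v_{tt}|_2$ should not appear on the right-hand side of your first energy identity (it enters only at the next level, in the paper's Lemma~\ref{5}); this does not affect the argument.
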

\begin{proof}
Multiplying  (\ref{zhu7}) by $u_{tt}$ and integrating over $\mathbb{R}^3$ give
\begin{equation}\label{zhu19}
\begin{split}
& \frac{1}{2}\frac{d}{dt}\Big(a\alpha|(h^2+\epsilon^2)^{\frac{1}{4}}\nabla u_t|^2_2+a(\alpha+\beta)|(h^2+\epsilon^2)^{\frac{1}{4}}\text{div} u_t|^2_2\Big)+| u_{tt}|^2_2\\
=&\int \Big(-(v\cdot \nabla v)_t-\nabla \phi_t-\frac{a h}{\sqrt{h^2+\epsilon^2}} h_t Lu +\frac{a h}{\sqrt{h^2+\epsilon^2}}\nabla h\cdot Q(u_t) \Big)\cdot u_{tt} \\
&+\int \Big(\frac{a h}{\sqrt{h^2+\epsilon^2}}h_t\big(\alpha|\nabla u_t|^2+(\alpha+\beta)|\text{div}u_t|^2\big)+(\psi \cdot Q(v))_t\cdot u_{tt} \Big) \\
\leq& C\big(|v_t|_{2}|\nabla v|_{\infty}+|v|_\infty |\nabla v_t|_2+  |\nabla \phi_t|_2+|h_t|_\infty|\nabla^2 u|_2+|\psi_t|_2|\nabla v|_\infty\big)|u_{tt}|_2\\
&+ C\big(|\psi|_\infty |\nabla v_t|_2+|\psi|_\infty|\varphi|^{\frac{1}{2}}_\infty|\sqrt{h}\nabla u_t|_2\big) |u_{tt}|_2+ C|h_t|_\infty|\varphi|_\infty|\sqrt{h}\nabla u_t|^2_2.\\
\end{split}
\end{equation}
Integrating (\ref{zhu19}) over $(\tau,t)$ shows that for $0\leq t \leq T_3$,
\begin{equation}\label{zhu22g}
\begin{split}
|\sqrt{h}\nabla u_t(t)|^2_2+\int_{\tau}^t| u_{tt}|^2_2\text{d}s\leq  C|(h^2+\epsilon^2)^{\frac{1}{4}}\nabla u_t(\tau)|^2_2+Cc^{14}_4t+Cc^{3}_4\int_0^t  |\sqrt{h}\nabla u_t|^2_2\text{d}s.
\end{split}
\end{equation}

On the other hand,  it follows from the momentum equations $ (\ref{li4})_2$ that
\begin{equation}\label{zhu15wsx}
\begin{split}
|\sqrt{h}\nabla u_t(\tau)|_2\leq& \big( |\sqrt{h}\nabla \big(v\cdot \nabla v+ \nabla \phi+ a \sqrt{h^2+\epsilon^2} Lu-\psi\cdot Q(v)\big)|_{2}\big)(\tau).
\end{split}
\end{equation}
Then by the assumption (\ref{vg}), Lemma \ref{lem1},  (\ref{houmian})-(\ref{g123definition}) and  (\ref{gai11qq})-(\ref{gai11qqcv}), one has
\begin{equation}\label{zhu15vbwsx}
\begin{split}
\lim \sup_{\tau\rightarrow 0}|\sqrt{h}\nabla u_t(\tau)|_2\leq &C( |\sqrt{h}_0\nabla (u_0\cdot \nabla u_0)|_2+|\sqrt{h}_0\nabla^2\phi_0|_2\\
&+|\sqrt{h}_0\nabla (\psi_0\cdot Q(u_0))|_2+| g_3|_{2}+\epsilon |\nabla Lu_0|_2)\\
\leq& C\big( |\phi^e_0u_0|_6|\nabla^2 u_0|_3+|\nabla u_0|_\infty |\phi^e_0\nabla u_0|_2|+|\phi^e_0\nabla^2\phi_0|_2\big)\\
+& C\big( |\phi^e_0\nabla^2u_0|_2|\psi_0|_\infty+|\nabla \psi_0|_3 |\phi^e_0\nabla u_0|_6+c_0\big)\leq Cc^3_0,
\end{split}
\end{equation}
which implies that
$$
\lim \sup_{\tau\rightarrow 0}|\sqrt{\epsilon}\nabla u_t(\tau)|_2\leq  \lim \sup_{\tau\rightarrow 0}\sqrt{\epsilon} |\varphi|^{\frac{1}{2}}_\infty|\sqrt{h}\nabla u_t(\tau)|_2\leq Cc^{\frac{7}{2}}_0.$$

Letting $\tau \rightarrow 0$ in (\ref{zhu22g}) and using the Gronwall's inequality, one can obtain
\begin{equation}\label{zhu22wsx}
\begin{split}
|\sqrt{h}\nabla u_t(t)|^2_2+\int_{0}^t| u_{tt}(s)|^2_2\text{d}s\leq  C(c^7_0+c^{14}_4t)\exp(Cc^{3}_4 t)\leq Cc^{7}_0, \quad  |\nabla u_t(t)|^2_2\leq Cc^{8}_0,
\end{split}
\end{equation}
for $0\leq t \leq T_3$, which, along with (\ref{jiabiao}), implies that
\begin{equation*}
\begin{split}
|\sqrt{h^2+\epsilon^2}u|_{D^3}
+
|\sqrt{h^2+\epsilon^2}\nabla^3u |_{2}
+
|h\nabla^2u|_{D^1}\leq Cc^{7}_3,\ \ | \nabla^3u|_{2}\leq& Cc^{8}_3.
\end{split}
\end{equation*}
It follows from (\ref{zhu7}) that
\begin{equation}\label{zhu77qq}
\begin{split}
a\sqrt{h^2+\epsilon^2}Lu_t=&-u_{tt}-(v\cdot\nabla v)_t -\nabla \phi_t-\frac{a h}{\sqrt{h^2+\epsilon^2}} h_t Lu+(\psi\cdot Q(v))_t\\
=&  a L(\sqrt{h^2+\epsilon^2} u_t)+G(\nabla \sqrt{h^2+\epsilon^2},u_t).
\end{split}
\end{equation}

Now  applying  Lemma \ref{zhenok} to \eqref{xintuo} and (\ref{zhu77qq}),  one gets for $0 \leq t \leq T_3$,
\begin{equation}\label{gaijia1}
\begin{split}
|\sqrt{h^2+\epsilon^2} u_t(t)|_{D^2}\leq& C\Big|u_{tt}+(v\cdot\nabla v)_t+\nabla \phi_t-(\psi\cdot Q(v))_t+\frac{a h}{\sqrt{h^2+\epsilon^2}} h_t Lu\Big|_2\\
&+C|G(\nabla \sqrt{h^2+\epsilon^2},u_t)|_2\leq C(|u_{tt}(t)|_2+c^{7}_4),\\
|\sqrt{h^2+\epsilon^2} \nabla^2 u_t(t)|_2\leq& C(|\sqrt{h^2+\epsilon^2}  u_t(t)|_{D^2}+\| \psi\|_{D^1\cap D^2}| \nabla u_t|_2+|\psi|^2_\infty|  u_t|_2|\varphi|_\infty)\\
 \leq & C(|\sqrt{h^2+\epsilon^2}  u_t|_{D^2}+c^8_4),\\
|(h\nabla^2 u)_t(t)|_2
\leq & C\big(|h\nabla^2 u_t|_2+|h_t|_\infty |\nabla^2u|_2\big)\leq  C\big(|u_{tt}(t)|_2+c^8_4\big),\\
|u(t)|_{D^4}\leq &C \big|(h^2+\epsilon^2)^{-\frac{1}{2}}(u_t+ v\cdot\nabla v+ \nabla \phi-\psi\cdot Q(v))\big|_{D^2}\\
\leq  &C \big(c^2_0|u_{tt}(t)|_2+c^{10}_4\big).
\end{split}
\end{equation}

Using the momentum equations $ (\ref{li4})_2$, for multi-index $\xi\in R^3$ with $|\xi|=2$, one has
\begin{equation}\label{jiegou1}
\begin{split}
a L(\sqrt{h^2+\epsilon^2} \nabla^\xi u)=& a \sqrt{h^2+\epsilon^2}  \nabla^\xi Lu-G(\nabla \sqrt{h^2+\epsilon^2} ,\nabla^\xi u)\\
=&-\sqrt{h^2+\epsilon^2}  \nabla^\xi \big( (h^2+\epsilon^2)^{-\frac{1}{2}}  (u_t+v\cdot\nabla v +\nabla \phi-\psi\cdot Q(v) ) \big)\\
&-G(\nabla \sqrt{h^2+\epsilon^2} ,\nabla^\xi u ),
\end{split}
\end{equation}
which  implies that
\begin{equation}\label{gaijia1SD}
\begin{split}
&|\sqrt{h^2+\epsilon^2}  \nabla^2 u(t)|_{D^2}\\
\leq& C \big|\sqrt{h^2+\epsilon^2}  \nabla^\xi \big( (h^2+\epsilon^2)^{-\frac{1}{2}}  (u_t+v\cdot\nabla v +\nabla \phi-\psi\cdot Q(v) )) \big|_{2}\\
&+C\big(|\psi|_\infty |u|_{D^3}+|\nabla\psi|_{3}|\nabla^2 u|_6+|\nabla^2 u|_2|\psi|^2_\infty|\varphi|_\infty\big)\\
\leq &C \big(|u_t(t)|_{D^2}+c^{9}_4\big).
\end{split}
\end{equation}
Thus, for $T_4=\min (T_3, (1+c_4)^{-20})$, it holds that
\begin{equation*}\begin{split}
&\int_0^{T_4}\big( |h\nabla^2 u_t|^2_2+|u_t|^2_{D^2}+|u|^2_{D^4}+|h\nabla^2 u|^2_{D^2}+|(h\nabla^2 u)_t|^2_2\big)\text{d}t\leq Cc^{11}_0.
\end{split}
\end{equation*}
\end{proof}

\begin{lemma}\label{5} Let $(\phi,u,h)$ be the unique classical solution to (\ref{li4}) in $[0,T] \times \mathbb{R}^3$. Then,
\begin{equation}\label{final5}
\begin{split}
&t(|u_t(t)|^2_{ D^2}+|u_{tt}(t)|^2_2+|u(t)|^2_{D^4})+\int_{0}^{t}s(|u_{tt}|^2_{D^1}+|u_{t}|^2_{D^3})\text{d}s\leq Cc^8_4,
\end{split}
\end{equation}
for  $0\leq t \leq T_5=\min (T_4, (1+c_5)^{-14})$.

 \end{lemma}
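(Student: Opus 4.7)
The plan is to obtain a pointwise bound on $|u_{tt}(t)|_2^2$ with the vanishing weight $t$, since the initial compatibility conditions in (\ref{th78zxq}) do not furnish an $L^2$-control on $u_{tt}(0)$. I would differentiate (\ref{zhu7}) once more in $t$ to obtain
$$u_{ttt}+a\sqrt{h^2+\epsilon^2}\,Lu_{tt}=\mathcal{R},$$
where $\mathcal{R}$ collects $\partial_t$ of the right-hand side of (\ref{zhu7}) together with $-(a\sqrt{h^2+\epsilon^2})_t\,Lu_t$. Taking the $L^2$ inner product with $u_{tt}$ and integrating by parts on the Lam\'e term exactly as in (\ref{zhu6}) gives
$$\tfrac{1}{2}\tfrac{d}{dt}|u_{tt}|_2^2+a\alpha\big|(h^2+\epsilon^2)^{1/4}\nabla u_{tt}\big|_2^2+a(\alpha+\beta)\big|(h^2+\epsilon^2)^{1/4}\mathrm{div}\,u_{tt}\big|_2^2=\int\mathcal{R}\cdot u_{tt}-a\int\nabla\sqrt{h^2+\epsilon^2}\cdot Q(u_{tt})\cdot u_{tt}.$$
Multiplying by $t$ and integrating over $(0,t)$, the identity $\tfrac{d}{dt}(\tfrac{t}{2}|u_{tt}|_2^2)=\tfrac{t}{2}\tfrac{d}{dt}|u_{tt}|_2^2+\tfrac12|u_{tt}|_2^2$ produces a remainder $\tfrac12\int_0^t|u_{tt}|_2^2\,ds$ on the right-hand side that is absorbed via Lemma \ref{5zx} ($\int_0^t|u_{tt}|_2^2\,ds\le Cc_0^{11}$); the boundary term at $t=0$ vanishes with no compatibility assumption on $u_{tt}(0)$.

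To close the energy estimate, the integrand $s\mathcal{R}\cdot u_{tt}$ must be controlled. Its heaviest contributions are $s(v\cdot\nabla v)_{tt}\cdot u_{tt}$, $s\nabla\phi_{tt}\cdot u_{tt}$, $s(\psi\cdot Q(v))_{tt}\cdot u_{tt}$, and the viscosity contributions $s\,\partial_t^2\sqrt{h^2+\epsilon^2}\,Lu\cdot u_{tt}$ and $s(ah/\sqrt{h^2+\epsilon^2})h_t\,Lu_t\cdot u_{tt}$. Each is handled by Gagliardo--Nirenberg, H\"older, and Young inequalities so that gradient factors of $u_{tt}$ are absorbed into the left-hand side; the residuals are controlled by the time-weighted bound $\int_0^t s|v_{tt}|_2^2\,ds\le c_5^2$ from $(\ref{jizhu1})_5$ together with the bounds on $\phi_{tt},\psi_{tt},h_{tt},\varphi,f$ furnished by Lemmas \ref{2}--\ref{2zx}. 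Gronwall's inequality on $[0,T_5]$ with $T_5=\min(T_4,(1+c_5)^{-14})$ then yields
$$t|u_{tt}(t)|_2^2+\int_0^t s\big|\sqrt{h}\,\nabla u_{tt}\big|_2^2\,ds\le Cc_4^8.$$

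With this in hand, the remaining three bounds follow from elliptic regularity. Rewriting the $u_t$-equation as in (\ref{zhu77qq}), $aL(\sqrt{h^2+\epsilon^2}\,u_t)=-u_{tt}-(v\cdot\nabla v)_t-\nabla\phi_t-(ah/\sqrt{h^2+\epsilon^2})h_t\,Lu+(\psi\cdot Q(v))_t+G(\nabla\sqrt{h^2+\epsilon^2},u_t)$, Lemma \ref{zhenok} gives $|u_t|_{D^2}\le C(|u_{tt}|_2+\text{known})$, whence $t|u_t(t)|_{D^2}^2\le Cc_4^8$. A second elliptic estimate applied to (\ref{jiegou1}) for $|\xi|=2$, together with the just-obtained bound on $|u_t|_{D^2}$, produces the weighted bound on $|u|_{D^4}^2$. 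Finally, applying Lemma \ref{zhenok} one derivative higher to the $u_t$-equation yields $|u_t|_{D^3}\le C(|u_{tt}|_{D^1}+\text{known})$; squaring, multiplying by $s$, and integrating over $(0,t)$ closes the bound on $\int_0^t s|u_t|_{D^3}^2\,ds$ via the already controlled $\int_0^t s|\sqrt{h}\nabla u_{tt}|_2^2\,ds$.

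The main obstacle will be the terms in $\mathcal{R}$ involving second-time derivatives of the degenerate coefficient $\sqrt{h^2+\epsilon^2}$ multiplied by $Lu$, and those with $h_t$ multiplied by $Lu_t$: since $Lu$ and $Lu_t$ are only available in weighted $L^2$-norms (through $|h\nabla^2 u|_2$ and $|\sqrt{h}\nabla u_t|_2$), the estimates must be organised as $h_{tt}\cdot\varphi\cdot(hLu)$ and $h_t\cdot\varphi\cdot(hLu_t)$, exploiting $\varphi=h^{-1}\in L^\infty$ from Lemma \ref{2zx} together with $h_{tt}\in L^2_tL^6_x$ from Lemma \ref{3}. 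Keeping these estimates independent of both the artificial viscosity $\epsilon$ and the lower bound $\eta$ on $\phi_0$ is precisely the purpose of the uniform framework built through Lemmas \ref{2}--\ref{5zx}.
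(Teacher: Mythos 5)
Your proposal follows essentially the same route as the paper: differentiate (\ref{zhu7}) in time, perform the $t$-weighted energy estimate on $u_{tt}$, and then recover $|u_t|_{D^2}$, $|u|_{D^4}$, and $\int_0^t s|u_t|_{D^3}^2\,ds$ from the elliptic estimates (\ref{zhu77qq}), (\ref{jiegou1}) and Lemma \ref{zhenok}, exactly as in (\ref{zhu25})--(\ref{chun1}). The only point to make precise is the claim that the boundary term at $t=0$ "vanishes": this is not automatic for every $\tau\to 0$, and the paper integrates over $(\tau,t)$ and invokes Lemma \ref{1} (applied to $u_{tt}\in L^2([0,T];L^2)$ from Lemma \ref{5zx}) to select a sequence $s_k\to 0$ with $s_k|u_{tt}(s_k)|_2^2\to 0$ before passing to the limit.
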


\begin{proof}

Now applying $\partial_t$ to (\ref{zhu7}) yields
\begin{equation}\label{zhu25}
\begin{split}
u_{ttt}+&a\sqrt{h^2+\epsilon^2} Lu_{tt}
=-2v_t\cdot\nabla v_t -v_{tt}\cdot \nabla v-v\cdot \nabla v_{tt}-\nabla \phi_{tt}-\frac{a \epsilon^2 h^2_t}{(h^2+\epsilon^2)^{\frac{3}{2}}}Lu\\
&-\frac{a h}{\sqrt{h^2+\epsilon^2}}h_{tt}Lu-2\frac{a h}{\sqrt{h^2+\epsilon^2}}h_{t}Lu_t+2\psi_t\cdot Q(v_t)+\psi_{tt}\cdot Q(v)+ \psi\cdot Q(v_{tt}).
\end{split}
\end{equation}
Multiplying  (\ref{zhu25}) by $u_{tt}$ and integrating over $\mathbb{R}^3$ give
\begin{equation}\label{zhu27}
\begin{split}
&\frac{1}{2} \frac{d}{dt}|u_{tt}|^2_2+a\alpha|(h^2+\epsilon^2)^{\frac{1}{4}}\nabla u_{tt}|^2_2+a(\alpha+\beta)|(h^2+\epsilon^2)^{\frac{1}{4}}\text{div} u_{tt}|^2_2\\
=&\int \Big(-2v_t\cdot\nabla v_t -v_{tt}\cdot \nabla v-v\cdot \nabla v_{tt}-\nabla \phi_{tt}-\frac{a h}{\sqrt{h^2+\epsilon^2}}\nabla h \cdot Q(u)_{tt}\Big)\cdot u_{tt}\\
&-\int \Big(\frac{a \epsilon^2 h^2_t}{(h^2+\epsilon^2)^{\frac{3}{2}}}Lu+\frac{a h}{\sqrt{h^2+\epsilon^2}}h_{tt}Lu\Big)\cdot u_{tt}  \\
&+\int\Big(-2\frac{a h}{\sqrt{h^2+\epsilon^2}}h_{t}Lu_t+2\psi_t\cdot Q(v_t)+\psi_{tt}\cdot Q(v)+ \psi\cdot Q(v_{tt}) \Big)\cdot u_{tt}  \\
\leq& C\big(|\nabla v_t|_6|v_t|_3+ |\nabla v|_\infty | v_{tt}|_2+|v|_\infty| \nabla v_{tt}|_2+|h_{tt}|_6|\nabla^2 u|_3 \big)|u_{tt}|_2\\
&+  C\big(|\phi_{tt}|_2+|\psi|_\infty|u_{tt}|_2  \big)|\sqrt{h}\nabla u_{tt}|_2|\varphi|^{\frac{1}{2}}_\infty+C|h_t|^2_\infty|\varphi|_\infty|\nabla^2 u|_2|u_{tt}|_2\\
&+C|h_t|_\infty|\nabla^2 u_t|_2 |u_{tt}|_2+ C\big(|\psi_t|_3|\nabla v_t|_6+| \psi_{tt}|_2|\nabla v|_\infty+ |\psi|_\infty|\nabla v_{tt}|_2\big)| u_{tt}|_2.
\end{split}
\end{equation}

Multiplying both sides of (\ref{zhu27}) by $t$  and  integrating over $(\tau,t)$, one can get
\begin{equation}\label{zhu29}
\begin{split}
&t|u_{tt}(t)|^2_2+\frac{a\alpha}{2}\int_\tau^t s|\sqrt{h}\nabla u_{tt}|^2_2\text{d}s
\leq \tau |u_{tt}(\tau)|^2_2+Cc^4_4+ Cc^3_5\int_\tau^t s|u_{tt}|^2_2\text{d}s,
\end{split}
\end{equation}
where $0\leq t \leq T_5=\min (T_4, (1+c_5)^{-20})$.

It follows  from \eqref{zhu22wsx} and Lemma \ref{1} that
there exists a sequence $s_k$ such that
$$
s_k\rightarrow 0, \quad \text{and}\quad s_k |u_{tt}(s_k,x)|^2_2\rightarrow 0, \quad \text{as} \quad k\rightarrow+\infty.
$$
Taking $\tau=s_k$ and letting $ k\rightarrow+\infty$ in (\ref{zhu29}),
due to the Gronwall's inequality, one has
\begin{equation}\label{zhu30qq}
\begin{split}
t|u_{tt}(t)|^2_2+\int_0^ts|\sqrt{h}\nabla u_{tt}|^2_2\text{d}s
\leq &Cc^4_4 \exp (c^3_5t)
\leq  Cc^4_4\\
 \int_0^ts|\nabla u_{tt}|^2_2\text{d}s\leq&  Cc^{5}_4.
\end{split}
\end{equation}

According to (\ref{gaijia1}), (\ref{gaijia1SD}) and (\ref{zhu30qq}), it holds that
\begin{equation}\label{gaijia1zx}
\begin{split}
t^{\frac{1}{2}}|\nabla^2 u_t(t)|_2 \leq Cc_0c^{2}_4,\quad
t^{\frac{1}{2}}|\nabla^4 u(t)|_{2}
\leq & Cc^2_0c^{2}_4.
\end{split}
\end{equation}

Then it follows from   Lemma \ref{zhenok} and  \eqref{zhu77qq} that  for $0 \leq t \leq T_5$,
\begin{equation}\label{dada}
\begin{split}
|\sqrt{h^2+\epsilon^2}u_t(t)|_{D^3}\leq& C\Big|u_{tt}+(v\cdot\nabla v)_t+\nabla \phi_t-(\psi\cdot Q(v))_t+\frac{ah}{\sqrt{h^2+\epsilon^2}} h_t Lu\Big|_{D^1}\\
&+C|G(\nabla \sqrt{h^2+\epsilon^2},u_t)|_{D^1}\\
\leq& C\big(|\nabla u_{tt}|_{2}+c^{10}_4+c_4(| v_t|_{D^2}+| u_t|_{D^2})\big),\\
|\sqrt{h^2+\epsilon^2}\nabla^3 u_t(t)|_{2}\leq& C\big(|\sqrt{h^2+\epsilon^2} u_t|_{D^3} +|u_t|_\infty|\nabla^2 \psi|_2+|\nabla u_t|_6|\nabla \psi|_3\\
&+|\nabla^2 u_t|_2|\psi|_\infty+|\nabla u_t|_2\|\psi\|^2_{D^1\cap D^2}|\varphi|_\infty+|u_t|_2|\psi|^3_\infty|\varphi|^2_2 \big),
\end{split}
\end{equation}
which, along with (\ref{lipan1}) and  (\ref{zhu30qq})-(\ref{dada}), implies that
\begin{equation}\label{chun1}
\begin{split}
&\int_0^{T_5} t\big(|\sqrt{h^2+\epsilon^2}u_t|^2_{D^3}+|h\nabla^3u_t|^2_2+|\nabla^3u_t|^2_2\big)\text{d}t
\leq  Cc^7_4.
\end{split}
\end{equation}

\end{proof}

Then by Lemmas \ref{2}-\ref{5}, for $0 \leq t \leq T_5=\min (T^*, (1+Cc_5)^{-20})$, one has
\begin{equation*}
\begin{split}
\big(\|\phi-\phi^\infty\|^2_3+\|\phi_t\|^2_2+  |\phi_{tt}|^2_2\big)(t)+  \int_0^t \|\phi_{tt}\|^2_1 \text{d}s\leq & Cc^6_4,\\
  \|\psi(t)\|^2_{D^1\cap D^2}\leq Cc^2_0,\quad  |\psi_t(t)|_2\leq& Cc^2_3,\\
  |h_t(t)|^2_\infty\leq   Cc^3_3c_4,\ \
h(t,x)>\frac{1}{2c_0},\ \  |\psi_t(t)|^2_{D^1}+\int_0^t\big( |\psi_{tt}|^2_{2}+|h_{tt}|^2_{6}\big)\text{d}s\leq &  Cc^4_4, \\
\frac{2}{3}\eta^{-2e}< \varphi, \  \big(\|\varphi\|^2_{L^\infty\cap D^{1,6}\cap D^{2,3}\cap D^3}+\|f\|^2_{L^\infty\cap L^6 \cap D^{1,3}\cap D^2} \big)(t)\leq & Cc^4_0,\\
\big(\|\varphi_t\|^2_{L^6\cap D^{1,3}\cap D^2}+\|f_t\|^2_{L^3\cap D^{1}}\big)(t)\leq  &Cc^{10}_4,\\
|\sqrt{h}\nabla u|^2_2+\| u(t)\|^2_{1}+\int_{0}^{t} \Big( \|\nabla u\|^2_{1}+|u_t|^2_{2}\Big)\text{d}s \leq& Cc^{4}_0,\\
\big(|u|^2_{D^2}+|h\nabla^2 u|^2_2+|u_t|^2_{2}\big)(t)+\int_{0}^{t} \Big( |u|^2_{D^3}+|h\nabla^2 u|^2_{D^1}+|u_t|^2_{D^1}\Big)\text{d}s \leq& Cc^{9}_2c_3,\\
\big(|u_t|^2_{D^1}+|\sqrt{h}\nabla u_t|^2_2+|u|^2_{D^3}+|h\nabla^2 u|^2_{D^1}\big)(t)+\int_{0}^{t}|u_t|^2_{D^{2}}\text{d}s\leq& Cc^{16}_3, \\
\int_{0}^{t}\Big(|u_{tt}|^2_{2}+|u|^2_{D^{4}}+|h\nabla u|^2_{D^1}+|h\nabla^2 u|^2_{D^2}+|(h\nabla^2 u)_t|^2_{2}\Big)\text{d}s\leq& Cc^{11}_0, \\
t\big(|u_t|^2_{ D^2}+|u_{tt}|^2_2+|u|^2_{D^4}\big)(t)+\int_{0}^{t}(s|u_{tt}|^2_{D^1}+s|u_{t}|^2_{D^3})\text{d}s\leq& Cc^8_4.
\end{split}
\end{equation*}

Therefore, defining  the time
$$T^*=\min (T, (1+C^{\frac{357}{2}}c^{576}_0)^{-20})$$ and constants
\begin{equation}\label{dingyi45}
\begin{split}
&c_1=C^{\frac{1}{2}}c_0, \quad  c_2=C^{\frac{1}{2}}c^2_0,\quad c_3=C^{\frac{11}{2}}c^{18}_0,\quad   c_4= C^{\frac{89}{2}}c^{144}_0, \ \ c_5=C^{\frac{357}{2}}c^{576}_0,
\end{split}
\end{equation}
one can obtain
\begin{equation}\label{jkk}
\begin{split}
\big(\|\phi-\phi^\infty\|^2_3+\|\phi_t\|^2_2+  |\phi_{tt}|^2_2\big)(t)+  \int_0^t \|\phi_{tt}\|^2_1 \text{d}s\leq & c^2_5,\\
\|\psi(t)\|^2_{D^1\cap D^2}\leq c^2_1,\quad |h_t|^2_\infty+ |\psi_t(t)|_2\leq & c^2_4,\\
 |\psi_t(t)|^2_{D^1}+\int_0^t\big( |\psi_{tt}|^2_{2}+|h_{tt}|^2_{2}\big)\text{d}s\leq & c^2_5,\\
h>\frac{1}{2c_0},\ \  \frac{2}{3}\eta^{-2e}<\varphi,\ \ \big(\|\varphi\|^2_{L^\infty\cap D^{1,6}\cap D^{2,3}\cap D^3}+\|f\|^2_{L^\infty\cap L^6 \cap D^{1,3}\cap D^2} \big)(t)\leq & c^2_2,\\
\big(\|\varphi_t\|^2_{L^6\cap D^{1,3}\cap D^2}+\|f_t\|^2_{L^3\cap D^{1}}\big)(t)\leq & c^{3}_5,\\
|\sqrt{h}\nabla u|^2_2+\| u(t)\|^2_{1}+\int_{0}^{t} \Big( \|\nabla u\|^2_{1}+|u_t|^2_{2}\Big)\text{d}s \leq& c^{2}_2,\\
\big(|u|^2_{D^2}+|h\nabla^2 u|^2_2+|u_t|^2_{2}\big)(t)+\int_{0}^{t} \Big( |u|^2_{D^3}+|h\nabla^2 u|^2_{D^1}+|u_t|^2_{D^1}\Big)\text{d}s \leq& c^2_3,\\
\big(|\sqrt{h}\nabla u_t|^2_2+|u_t|^2_{D^1}+|u|^2_{D^3}+|h\nabla u|^2_{D^1}+|h\nabla^2 u|^2_{D^1}\big)(t)+\int_{0}^{t}|u_t|^2_{D^{2}}\text{d}s\leq& c^{2}_4, \\
\int_{0}^{t}\Big(|u_{tt}|^2_{2}+|u|^2_{D^{4}}+|h\nabla^2 u|^2_{D^2}+|(h\nabla^2 u)_t|^2_{2}\Big)\text{d}s\leq& c^{2}_4, \\
t\big(|u_t|^2_{ D^2}+|u_{tt}|^2_2+|u|^2_{D^4}\big)(t)+\int_{0}^{t}(s|u_{tt}|^2_{D^1}+s|u_{t}|^2_{D^3})\text{d}s\leq& c^2_5,
\end{split}
\end{equation}
for $0\leq t \leq T^*$.
In another word, given fixed $c_0$ and $T$, there are positive constants $T^*$, $c_i$ $(i=1, 2, 3, 4,5)$, depending only on $c_0$ and $T$, such that if \eqref{jizhu1} holds for $(g,v)$, then \eqref{jkk} holds for the classical solution to
\eqref{li4} on $[0, T^*]\times \mathbb{R}^3$.

\subsection{Passing to the  limit $\epsilon\rightarrow 0$} With the help of the $(\epsilon,\eta)$-independent estimates established in (\ref{jkk}), we now establish the local existence result for the following linearized problem
without artificial viscosity (i.e., $\epsilon=0$) under the assumption $\phi_0\geq \eta$,
\begin{equation}\label{li4*}
\begin{cases}
\displaystyle
\phi_t+v\cdot \nabla \phi+(\gamma-1)\phi \text{div} v=0,\\[6pt]
\displaystyle
u_t+v\cdot\nabla v +\nabla \phi+a hLu=\psi \cdot Q(v),\\[10pt]
\displaystyle
h_t+v\cdot \nabla h+(\delta-1)g\text{div} v=0,\\[6pt]
\displaystyle
(\phi,u,h)|_{t=0}=(\phi_0,u_0,h_0)=\big(\phi_0,u_0,(\phi_0)^{2e}\big),\quad x\in \mathbb{R}^3,\\[10pt]
\displaystyle
(\phi,u,h)\rightarrow (\phi^\infty,0, h^\infty=(\phi^\infty)^{2e}),\quad \text{as}\quad  |x|\rightarrow +\infty,\quad t>0.
 \end{cases}
\end{equation}

\begin{lemma}\label{lem1q}
 Let (\ref{canshu}) hold.
Assume that  the initial data $(\phi_{0}, u_0,h_0=(\phi_0)^{2e})$  satisfies the hypothesis of Lemma \ref{lem1}, and there exists a positive constant $c_0$ independent of $\eta$ such that (\ref{houmian}) holds.  Then there exist a time $T^*>0$ independent of $\eta$, and a unique classical solution
$$\Big(\phi, u, h, \psi=\frac{a\delta}{\delta-1}\nabla h\Big)$$
 in $[0,T^*]\times \mathbb{R}^3$ to  (\ref{li4*}) satisfying  (\ref{reggh}) with $T$ replaced by $T^*$. Moreover,  $(\phi,u,h)$  satisfies the   estimates in (\ref{jkk}) independent of $\eta$.
\end{lemma}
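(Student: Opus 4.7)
The plan is a standard vanishing-artificial-viscosity argument built on the $(\epsilon,\eta)$-independent estimates \eqref{jkk} already established. Let $(\phi^\epsilon,u^\epsilon,h^\epsilon)$ be the family of classical solutions to \eqref{li4} supplied by Lemma \ref{lem1} for each $\epsilon>0$, and set $\psi^\epsilon=\tfrac{a\delta}{\delta-1}\nabla h^\epsilon$. Since \eqref{jkk} is uniform in $\epsilon$, all of these solutions exist on the common interval $[0,T^*]$ and satisfy the very same bounds. By Banach--Alaoglu I can extract a subsequence (not relabelled) with $\phi^\epsilon-\phi^\infty\rightharpoonup \phi-\phi^\infty$ weakly-$*$ in $L^\infty([0,T^*];H^3)$, $u^\epsilon\rightharpoonup u$ weakly-$*$ in $L^\infty([0,T^*];H^3)$ and weakly in $L^2([0,T^*];H^4)$, $u^\epsilon_t\rightharpoonup u_t$ weakly-$*$ in $L^\infty([0,T^*];H^1)$ and weakly in $L^2([0,T^*];D^2)$, $u^\epsilon_{tt}\rightharpoonup u_{tt}$ weakly in $L^2([0,T^*];L^2)$, together with the analogous weak-$*$ convergences for $h^\epsilon$, $h^\epsilon_t$, $\psi^\epsilon$, $\psi^\epsilon_t$, $\psi^\epsilon_{tt}$ dictated by \eqref{jkk}.

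Next, I upgrade to strong convergence via Lemma \ref{aubin}. The uniform control of $\phi^\epsilon$ in $L^\infty([0,T^*];H^3)$ together with $\phi^\epsilon_t$ in $L^\infty([0,T^*];H^2)$ yields, after applying Aubin--Lions on every ball $B_R\subset\mathbb{R}^3$ and a diagonal argument, strong convergence $\phi^\epsilon\to\phi$ in $C([0,T^*];H^{3-}_{\loc})$; the same reasoning gives $h^\epsilon\to h$ in $C([0,T^*];H^{3-}_{\loc})$, $\psi^\epsilon\to\psi$ in $C([0,T^*];H^{2-}_{\loc})$ and $u^\epsilon\to u$ in $C([0,T^*];H^{3-}_{\loc})\cap L^2([0,T^*];H^{4-}_{\loc})$. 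Importantly, \eqref{jkk} gives the uniform pointwise lower bound $h^\epsilon\ge 1/(2c_0)$, so
\[
\bigl|\sqrt{(h^\epsilon)^2+\epsilon^2}-h^\epsilon\bigr|=\frac{\epsilon^2}{\sqrt{(h^\epsilon)^2+\epsilon^2}+h^\epsilon}\le 2c_0\,\epsilon^2\to 0
\]
uniformly on $[0,T^*]\times\mathbb{R}^3$, so the coefficient $a\sqrt{(h^\epsilon)^2+\epsilon^2}$ converges strongly to $ah$ in $C([0,T^*]\times\mathbb{R}^3)$; combined with the weak convergence of $Lu^\epsilon$ in $L^2_t L^2_x$ this lets me pass to the limit in $a\sqrt{(h^\epsilon)^2+\epsilon^2}Lu^\epsilon$ in $\mathcal{D}'$. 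The other nonlinearities $\psi^\epsilon\cdot Q(v)$, $v\cdot\nabla\phi^\epsilon$, $\phi^\epsilon\dd v$, $g\dd v$, $\nabla\phi^\epsilon$ are handled trivially by the strong/weak pairing. The boundary/far-field conditions \eqref{li4*} survive the limit because the relevant norms dominate the decay.

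It then remains to check that the limit $(\phi,u,h)$ lies in the regularity class \eqref{reggh}. The weak-$*$ bounds coming from \eqref{jkk} give every spatial bound at once, and the temporal continuity statements (e.g.\ $u\in C([0,T^*];H^3)$, $\psi\in C([0,T^*];H^2)$) follow from the weak bounds together with time-differentiated bounds on $u_t$, $\psi_t$ via the classical embedding $L^\infty_t X\cap W^{1,p}_t Y\hookrightarrow C([0,T^*];X_{\mathrm{weak}})$, upgraded to norm continuity by a standard approximation using the equations themselves and Lemma \ref{zheng5}. The uniform estimates \eqref{jkk} transfer to the limit by lower semi-continuity of the relevant norms under weak and weak-$*$ convergence, so in particular $h(t,x)\ge 1/(2c_0)$ and $\varphi=h^{-1}$ inherits the bounds on $\varphi^\epsilon$.

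Finally, uniqueness. Given two solutions $(\phi_i,u_i,h_i)$, $i=1,2$, of \eqref{li4*} with the same data and the regularity \eqref{reggh}, set $\overline{\phi}=\phi_1-\phi_2$, $\overline{u}=u_1-u_2$, $\overline{h}=h_1-h_2$ (so $\overline{\psi}=\tfrac{a\delta}{\delta-1}\nabla\overline h$). Testing the difference of the $\phi$-equations with $\overline{\phi}$, of the $h$-equations with $\overline{h}$, and of the $u$-equations with $\overline{u}$, one obtains a closed energy inequality
\[
\tfrac{d}{dt}\bigl(|\overline{\phi}|_2^2+|\overline{h}|_2^2+|\overline{u}|_2^2\bigr)+c\,|\sqrt{h_1}\nabla\overline{u}|_2^2\le C(c_5)\bigl(|\overline{\phi}|_2^2+|\overline{h}|_2^2+|\overline{u}|_2^2\bigr),
\]
since $h_1$ is bounded below so $h_1|\nabla\overline u|^2$ controls $|\nabla\overline u|^2$; Gr\"onwall finishes the argument. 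The hard part of the whole proof is the coupling in the second paragraph: one must make sure that the coefficient $\sqrt{(h^\epsilon)^2+\epsilon^2}$ converges strongly enough to multiply the only weakly-convergent second derivative $Lu^\epsilon$, and it is precisely the uniform lower bound on $h^\epsilon$ from \eqref{jkk} which makes this harmless.
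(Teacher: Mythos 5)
Your proposal follows the paper's proof essentially verbatim: extract weak/weak\(^*\) limits from the \((\epsilon,\eta)\)-independent estimates \eqref{jkk}, upgrade to local strong convergence via Aubin--Lions, pass to the limit in the weak formulation, and then verify regularity and uniqueness. Your explicit estimate \(\bigl|\sqrt{(h^\epsilon)^2+\epsilon^2}-h^\epsilon\bigr|\le 2c_0\epsilon^2\) is a clean way to justify the coefficient limit (though one may notice that the \(\phi\)- and \(h\)-equations in \eqref{li4} contain no \(\epsilon\) at all, so \((\phi^\epsilon,h^\epsilon)\) is literally constant in \(\epsilon\) and only \(u^\epsilon\) requires compactness). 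One small caveat in your uniqueness sketch: since \eqref{li4*} is linear in \((\phi,u,h)\) for given \((v,g)\), the difference \(\overline{h}=h_1-h_2\) solves the homogeneous transport equation \(\overline{h}_t+v\cdot\nabla\overline{h}=0\), but the regularity class \eqref{reggh} does not assert \(h-h^\infty\in L^2\), so \(\overline{h}\in L^2\) is not a priori known and testing that equation against \(\overline{h}\) is not licensed; the conclusion \(\overline{h}\equiv 0\) (hence \(\overline{\psi}\equiv 0\)) should instead be read off directly from characteristics, after which the \(\overline{\phi}\)- and \(\overline{u}\)-equations decouple and the Gr\"onwall argument can be run equation by equation rather than through a coupled energy functional containing \(|\overline{h}|_2^2\).
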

\begin{proof} We shall prove the existence, uniqueness and time continuity in two steps.

\noindent\textbf{Step 1:} Existence. First, it follows from Lemmas \ref{lem1}--\ref{5} that  for every $\epsilon>0$ and $\eta>0$,   there exist a time $T^*>0$ independent of $(\epsilon,\eta)$, and   a unique strong solution $(\phi^{\epsilon,\eta}, u^{\epsilon,\eta}, h^{\epsilon,\eta})(t,x)$ in $[0,T^*]\times \mathbb{R}^3$  to the linearized problem (\ref{li4}) satisfying  the estimates in  (\ref{jkk}), which are independent of  $(\epsilon,\eta)$.

Second, using the characteristic method and the  standard energy estimates for transport equations,  and  $(\ref{li4})_3$, one gets easily that
\begin{equation}\label{related}
\|h^{\epsilon,\eta}(t)\|_{L^\infty\cap D^1}+ |h^{\epsilon,\eta}_t(t)|_{2}\leq C(\eta, \alpha, \beta, \gamma, \delta, T, \phi_0,u_0),\quad \text{for} \quad 0\leq t\leq T^*.
\end{equation}

Then,  by virtue or  the uniform estimates in (\ref{jkk}) independent of  $(\epsilon,\eta)$, estimates in (\ref{related}) independent of $\epsilon$,  and  the compactness in Lemma \ref{aubin} (see \cite{jm}), one gets that for any $R> 0$,  there exists a subsequence of solutions (still denoted by) $(\phi^{\epsilon,\eta}, u^{\epsilon,\eta}, h^{\epsilon,\eta})$, which  converges  to  a limit $(\phi^\eta,  u^\eta, h^\eta) $ in the following  strong sense:
\begin{equation}\label{ert1}\begin{split}
&(\phi^{\epsilon,\eta}, u^{\epsilon,\eta}, h^{\epsilon,\eta}) \rightarrow (\phi^\eta,  u^\eta, h^\eta) \quad \text{ in } \ C([0,T^*];H^2(B_R)),\quad \text{as}\quad \epsilon \rightarrow 0.
\end{split}
\end{equation}

Again, due to the uniform estimates in (\ref{jkk}) independent of  $\eta$ and the estimates in (\ref{related}) independent of $\epsilon$,  there exists a subsequence (of subsequence chosen above) of solutions (still denoted by) $(\phi^{\epsilon,\eta}, u^{\epsilon,\eta}, h^{\epsilon,\eta})$, which    converges to  $(\phi^\eta,  u^\eta, h^\eta) $ as $\epsilon \rightarrow 0$ in the following  weak  or  weak* sense:
\begin{equation}\label{ruojixian}
\begin{split}
(\phi^{\epsilon,\eta}-\phi^\infty, u^{\epsilon,\eta})\rightharpoonup  (\phi^\eta-\phi^\infty, u^\eta) \quad &\text{weakly* \ in } \ L^\infty([0,T^*];H^3),\\
(\phi^{\epsilon,\eta}_t,\psi^{\epsilon,\eta},h^{\epsilon,\eta}_t)\rightharpoonup ( \phi^\eta_t,\psi^\eta,h^{\eta}_t) \quad &\text{weakly* \ in } \ L^\infty([0,T^*];H^2),\\
u^{\epsilon,\eta}_t \rightharpoonup   u^\eta_t \quad &\text{weakly* \ in } \ L^\infty([0,T^*];H^1),\\
(\phi^{\epsilon,\eta}_{tt},\nabla^3 \varphi^{\epsilon,\eta},\nabla^2f^{\epsilon,\eta}) \rightharpoonup  (\phi^{\eta}_{tt},\nabla^3 \varphi^{\eta},\nabla^2f^{\eta}) \quad &\text{weakly* \ in } \ L^\infty([0,T^*];L^2),\\
(\nabla^2 \varphi^{\epsilon,\eta}_t,\nabla f^{\epsilon,\eta}_t) \rightharpoonup  (\nabla^2 \varphi^{\eta}_t,\nabla f^{\eta}_t) \quad &\text{weakly* \ in } \ L^\infty([0,T^*];L^2),\\
t^{\frac{1}{2}}(\nabla^2u^{\epsilon,\eta},u^{\epsilon,\eta}_{tt},\nabla^4u^{\epsilon,\eta}) \rightharpoonup  t^{\frac{1}{2}}(\nabla^2u^{\eta}, u^{\eta}_{tt},\nabla^4u^{\eta}) \quad &\text{weakly* \ in } \ L^\infty([0,T^*];L^2),\\
(\nabla^2\varphi ^{\epsilon,\eta},\nabla \varphi^{\epsilon,\eta}_{t},\nabla f^{\epsilon,\eta},f^{\epsilon,\eta}_t) \rightharpoonup  (\nabla^2\varphi ^{\eta},\nabla \varphi^{\eta}_{t},\nabla f^{\eta},f^\eta_t) \quad &\text{weakly* \ in } \ L^\infty([0,T^*];L^3),\\
(\nabla \varphi^{\epsilon,\eta}, \varphi^{\epsilon,\eta}_t,f^{\epsilon,\eta}) \rightharpoonup  (\nabla \varphi^{\eta}, \varphi^{\eta}_t,f^{\eta}) \quad &\text{weakly* \ in } \ L^\infty([0,T^*];L^6),\\
(h^{\epsilon,\eta}, h^{\epsilon,\eta}_t,\varphi^{\epsilon,\eta},f^{\epsilon,\eta}) \rightharpoonup  (h^{\eta}, h^{\eta}_t,\varphi^\eta,f^\eta) \quad &\text{weakly* \ in } \ L^\infty([0,T^*];L^\infty),\\
\nabla u^{\epsilon,\eta} \rightharpoonup  \nabla u^{\eta} \quad &\text{weakly \ \  in } \ \ L^2([0,T^*];H^3),\\
u^{\epsilon,\eta}_{t} \rightharpoonup  u^{\eta}_{t}\quad &\text{weakly \ \  in } \ \ L^2([0,T^*];H^2),\\
\phi^{\epsilon,\eta}_{tt}  \rightharpoonup  \phi^{\eta}_{tt} \quad &\text{weakly \ \  in } \ \ L^2([0,T^*];H^1),\\
(\psi^{\epsilon,\eta}_{tt},h^{\epsilon,\eta}_{tt},u^{\epsilon,\eta}_{tt}) \rightharpoonup  (\psi^{\eta}_{tt},h^{\eta}_{tt},u^{\eta}_{tt})\quad &\text{weakly \ \  in } \ \ L^2([0,T^*];L^2),\\
t^{\frac{1}{2}}(\nabla u^{\epsilon,\eta}_{tt},\nabla^3u^{\epsilon,\eta}_t) \rightharpoonup  t^{\frac{1}{2}}(\nabla u^{\eta}_{tt},\nabla^3u^{\eta})_t) \quad &\text{weakly \ \  in } \ \ L^2([0,T^*];L^2),
\end{split}
\end{equation}
which, along with  the lower semi-continuity of weak or weak* convergence, implies  that $(\phi^\eta,  u^\eta, h^\eta) $  satisfies also the corresponding  estimates in (\ref{jkk}) and (\ref{related}) except those weighted estimates on $u^\eta$.

Collecting the uniform estimates on $(\phi^\eta,  u^\eta, h^\eta) $ obtained above, together with the   strong convergence in (\ref{ert1}) and  the weak or weak* convergence in  (\ref{ruojixian}), one obtains that
\begin{equation}\label{ruojixian2}
\begin{split}
\sqrt{h^{\epsilon,\eta}}(\nabla u^{\epsilon,\eta},  \nabla u^{\epsilon,\eta}_t) \rightharpoonup  \sqrt{h^{\eta}}(\nabla u^{\eta},  \nabla u^{\eta}_t) \quad &\text{weakly* \ in } \ L^\infty([0,T^*];L^2),\\
h^{\epsilon,\eta}\nabla^2 u^{\epsilon,\eta} \rightharpoonup  h^{\eta}\nabla^2 u^{\eta} \quad &\text{weakly* \ in } \ L^\infty([0,T^*];H^1),\\
 (h^{\epsilon,\eta}\nabla^2 u^{\epsilon,\eta})_t \rightharpoonup  (h^{\eta}\nabla^2 u^{\eta})_t \quad &\text{weakly \ \  in } \ \ L^2([0,T^*];L^2),\\
h^{\epsilon,\eta}\nabla^2 u^{\epsilon,\eta} \rightharpoonup  h^{\eta}\nabla^2 u^{\eta}\quad &\text{weakly \ \  in } \ \ L^2([0,T^*]; D^1\cap D^2),\\
\end{split}
\end{equation}
which, along with the lower semi-continuity of weak or weak* convergence again, implies that $(\phi^\eta,  u^\eta, h^\eta) $  satisfies also the uniform weighted  estimates on  $ u^{\eta}$.

Now we  are going to show that $(\phi^\eta,  u^\eta, h^\eta) $ is a weak solution in the sense of distribution  to  (\ref{li4*}).
First, multiplying $(\ref{li4*})_2$ by  test function  $w(t,x)=(w^1,w^2,w^3)\in C^\infty_c ([0,T^*)\times \mathbb{R}^3)$ on both sides, and integrating over $[0,t)\times \mathbb{R}^3$ for $t\in (0,T^*]$, one has
\begin{equation}\label{zhenzheng1}
\begin{split}
&\int_0^t \int  \Big(u^{\epsilon,\eta}\cdot w_t - (v\cdot \nabla) v \cdot w +\phi^{\epsilon,\eta}\text{div}w \Big)\text{d}x\text{d}s\\
=&-\int u_0 \cdot w(0,x)+\int_0^t \int \Big(\sqrt{(h^{\epsilon,\eta})^2+\epsilon^2} Lu^{\epsilon,\eta}\cdot w-\psi^{\epsilon,\eta} \cdot Q(v) \cdot w\Big) \text{d}x\text{d}s.
\end{split}
\end{equation}
It follows from  the  uniform estimates obtained above, the strong convergence in (\ref{ert1}),  and  the weak convergences in (\ref{ruojixian})-(\ref{ruojixian2}) that  and  letting $\epsilon \rightarrow 0$ in (\ref{zhenzheng1}) yields
\begin{equation}\label{zhenzhengxx}
\begin{split}
&\int_0^t \int  \Big(u^{\eta}\cdot w_t - (v\cdot \nabla) v \cdot w +\phi^{\eta}\text{div}w \Big)\text{d}x\text{d}s\\
=&-\int u_0 \cdot w(0,x)+\int_0^t \int \Big(h^\eta Lu^{\eta}\cdot w-\psi^{\eta} \cdot Q(v) \cdot w\Big) \text{d}x\text{d}s.
\end{split}
\end{equation}
Second, one can use the similar argument to show that $(\phi^\eta,h^\eta)$ satisfies also the equations in  $(\ref{li4*})_1$ and $(\ref{li4*})_3$ and the initial data in the sense of distribution.
So it is clear that $(\phi^\eta,  u^\eta, h^\eta) $ is a weak solution in the sense of distribution  to the linearized problem (\ref{li4*}), satisfying the  following regularities
\begin{equation}\label{zheng}
\begin{split}
&\phi^\eta-\phi^\infty\in L^\infty([0,T^*]; H^3), \quad h^\eta\in L^\infty([0,T^*]\times \mathbb{R}^3),\  \ \nabla h^\eta \in L^\infty([0,T^*]; H^2),\\
& h^\eta_t\in L^\infty([0,T^*];H^2), \  \ \ u^\eta\in L^\infty([0,T]; H^3)\cap L^2([0,T^*]; H^4), \\
 &  u^\eta_t \in L^\infty([0,T^*]; H^1)\cap L^2([0,T^*]; D^2),\ \   u^\eta_{tt}\in L^2([0,T^*];L^2),\\
 &  t^{\frac{1}{2}}u^\eta\in L^\infty([0,T^*];D^4),\quad   t^{\frac{1}{2}}u^\eta_t\in L^\infty([0,T^*];D^2)\cap L^2([0,T^*]; D^3),\\
&   t^{\frac{1}{2}}u^\eta_{tt}\in L^\infty([0,T^*];L^2)\cap L^2([0,T^*];D^1).
\end{split}
\end{equation}
Therefore, this weak solution $(\phi^\eta,  u^\eta,h^\eta) $ of \eqref{li4*} is actually
a strong one.

\noindent\textbf{Step 2:} Uniqueness and Time continuity.  Due to the estimate $h^\eta>\frac{1}{2c_0}$,  the uniqueness and the time continuity of the strong solution obtained above can be proved via the completely same arguments as in Lemma \ref{lem1}, so details are omitted.

\end{proof}

\subsection{Construction of the nonlinear approximation solutions away from vacuum} In this subsection, based on the assumption that $\phi_0>\eta$, we will prove the local-in-time well-posedness of the  classical solution to the following Cauchy problem
 \begin{equation}
\begin{cases}
\label{eq:cccq-xxx}
\displaystyle
\phi_t+u\cdot \nabla \phi+(\gamma-1)\phi \text{div} u=0,\\[12pt]
\displaystyle
u_t+u\cdot\nabla u +\nabla \phi+a\phi^{2e}Lu=\psi \cdot Q(u),\\[12pt]
\displaystyle
\psi_t+\nabla (u\cdot \psi)+(\delta-1)\psi\text{div} u +\delta a\phi^{2e}\nabla \text{div} u=0,\\[12pt]
\displaystyle
(\phi,u,\psi)|_{t=0}=(\phi_0,u_0,\psi_0)=\big(\phi_0,u_0,\frac{a\delta}{\delta-1}\nabla (\phi_0)^{2e}\big),\quad x\in \mathbb{R}^3,\\[10pt]
\displaystyle
(\phi,u,\psi)\rightarrow (\phi^\infty,0, 0),\quad \text{as}\quad  |x|\rightarrow +\infty,\quad t\geq0,
 \end{cases}
\end{equation}
whose life span is independent of $\eta$.

\begin{theorem}\label{th1zx} Let (\ref{canshu}) hold and $\phi^\infty$ be a postive constant.
 Assume that  the initial data $(\phi_{0},u_0,h_0=(\phi_0)^{2e},\psi_0=\frac{a\delta}{\delta-1}\nabla (\phi_0)^{2e})$  satisfies the hypothesis of Lemma \ref{lem1}, and there exists a positive constant $c_0$ independent of $\eta$ such that (\ref{houmian}) holds. Then there exist a time $T_*>0$ and a unique classical solution
 $$\Big(\phi, u,h=\phi^{2e}, \psi=\frac{a\delta}{\delta-1}\nabla h\Big)$$ in $[0,T_*]\times \mathbb{R}^3$ to the Cauchy problem (\ref{eq:cccq-xxx}) satisfying (\ref{reggh}) and
\begin{equation*}\begin{split}
&\phi^{-2e} \in L^\infty([0,T_*];L^\infty\cap D^{1,6} \cap D^{2,3} \cap D^3), \ \   \nabla \phi/ \phi \in L^\infty([0,T_*];L^\infty\cap L^6\cap  D^{1,3} \cap D^2),
\end{split}
\end{equation*}
where $T_*$ is independent of $\eta$.
Moreover, if the initial data satisfies  (\ref{houmian}),
then  estimates (\ref{jkk})  hold for  $( \phi,u,h)$ with $T^*$ replaced by $T_*$, and are independent of $\eta$.
\end{theorem}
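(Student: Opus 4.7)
The plan is to construct the solution to the nonlinear problem (\ref{eq:cccq-xxx}) by a Picard-type iteration based on the linearized problem (\ref{li4*}) solved in Lemma \ref{lem1q}. Starting from $(g^{0},v^{0})=(h_{0},u_{0})$ (extended trivially in time or taken as the solution of an obvious linear system with frozen coefficients so that (\ref{vg})--(\ref{jizhu1}) are satisfied initially), I would define iteratively $(\phi^{k+1},u^{k+1},h^{k+1})$ as the unique classical solution of (\ref{li4*}) with $(g,v)$ replaced by $(g^{k},v^{k}):=(h^{k},u^{k})$. By Lemma \ref{lem1q}, on the common time interval $[0,T^{*}]$ each iterate satisfies the uniform bounds (\ref{jkk}) with constants $c_{1},\dots,c_{5}$ chosen exactly as in (\ref{dingyi45}) and $T^{*}$ depending only on $c_{0}$, so the induction hypothesis (\ref{jizhu1}) is propagated from step $k$ to step $k+1$ and all iterates remain in the same bounded ball, uniformly in $\eta$.

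Next, I would show that the sequence $(\phi^{k},u^{k},h^{k},\psi^{k})$ is Cauchy in a weaker norm, namely in $L^{\infty}([0,T_{*}];L^{2})$ for $(\phi^{k},u^{k},h^{k},\psi^{k})$ together with $L^{2}([0,T_{*}];D^{1})$ for $u^{k}$. Writing
\begin{equation*}
\bar{\phi}^{k+1}=\phi^{k+1}-\phi^{k},\quad \bar{u}^{k+1}=u^{k+1}-u^{k},\quad \bar{h}^{k+1}=h^{k+1}-h^{k},\quad \bar{\psi}^{k+1}=\psi^{k+1}-\psi^{k},
\end{equation*}
I would derive the equations satisfied by these differences from (\ref{li4*}), then test $(\ref{li4*})_{1}$-equation for $\bar{\phi}^{k+1}$ against $\bar{\phi}^{k+1}$, the momentum equation for $\bar{u}^{k+1}$ against $\bar{u}^{k+1}$, and the $h$-equation for $\bar{h}^{k+1}$ against $\bar{h}^{k+1}$. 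The viscosity term $a\sqrt{(h^{k+1})^{2}}L\bar{u}^{k+1}=ah^{k+1}L\bar{u}^{k+1}$ yields a coercive contribution $a\alpha|\sqrt{h^{k+1}}\nabla \bar{u}^{k+1}|_{2}^{2}$, which is meaningful since the uniform bound $h^{k+1}>1/(2c_{0})$ in (\ref{jkk}) gives a positive lower bound independent of $\eta$. The forcing differences are estimated by the uniform bounds on higher derivatives from (\ref{jkk}) and Gronwall's inequality, producing an estimate of the form
\begin{equation*}
\sup_{0\le t\le T_{*}}\bigl(|\bar{\phi}^{k+1}|_{2}^{2}+|\bar{u}^{k+1}|_{2}^{2}+|\bar{h}^{k+1}|_{2}^{2}\bigr)+\int_{0}^{T_{*}}|\nabla\bar{u}^{k+1}|_{2}^{2}\,\mathrm{d}t\le \tfrac{1}{2}\mathcal{E}^{k}
\end{equation*}
after shrinking $T_{*}$ once more if necessary, where $\mathcal{E}^{k}$ is the analogous quantity at step $k$. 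Summing a geometric series gives convergence to some limit $(\phi,u,h,\psi)$ in this weaker topology.

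Having produced a strong limit, I would upgrade regularity by standard weak and weak-$*$ compactness: the uniform bounds in (\ref{jkk}) pass to $(\phi,u,h)$ by lower semi-continuity, and the Aubin--Lions lemma (Lemma \ref{aubin}) combined with the uniform bounds on $\varphi^{k}=(h^{k})^{-1}$ and $f^{k}=\psi^{k}/h^{k}$ from Lemma \ref{2zx} gives enough strong convergence of the nonlinear and singular coefficients (in particular of $h^{k}L u^{k}$ and $\psi^{k}\cdot Q(u^{k})$) to pass to the limit in the equations (\ref{eq:cccq-xxx}). Crucially, because $\psi^{k}=\frac{a\delta}{\delta-1}\nabla h^{k}$ holds for each $k$ by construction (as in \S 3.2.2) and the convergence of $h^{k}$ is strong enough to converge its gradient in distribution, the identity $\psi=\frac{a\delta}{\delta-1}\nabla h$ and the pointwise relation $h=\phi^{2e}$ (inherited from the ODE along characteristics since $(\phi_{0})^{2e}=h_{0}$ and both $\phi^{2e}$ and $h$ satisfy the same transport equation along the limiting velocity) are preserved in the limit. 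Uniqueness and the time continuity of the obtained solution follow from exactly the same difference-energy argument applied to two solutions, as in the uniqueness part of Lemma \ref{lem1q}.

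The main obstacle will be the contraction step for the iteration, because the viscosity coefficient $h^{k+1}$ and the source $\psi^{k}\cdot Q(v^{k})$ are both controlled only through quantities that degenerate in the far field, so one must carefully exploit the $\eta$-independent lower bound $h^{k}>1/(2c_{0})$ from (\ref{jkk}) (derived through the auxiliary variable $\varphi$) and the $L^{\infty}$ control on $\psi^{k}$ to close the estimate on $|\bar{u}^{k+1}|_{2}$. A subordinate but technically delicate point is ensuring that the recovered limit retains the exact algebraic relation $h=\phi^{2e}$, which is essential for identifying the limit problem with (\ref{eq:cccq-xxx}) rather than merely with the linearized system (\ref{li4*}); this is handled by the uniqueness of transport equations with the same initial data and limiting velocity field.
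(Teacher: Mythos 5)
Your overall skeleton (Picard iteration on the linearized problem of Lemma \ref{lem1q}, propagation of the uniform bounds (\ref{jkk}), contraction in a weak norm, compactness, identification of the limit relations, uniqueness) is the same as the paper's, and your treatment of the algebraic identities $h=\phi^{2e}$, $\psi=\frac{a\delta}{\delta-1}\nabla h$ in the limit via uniqueness for the homogeneous transport system is exactly the paper's argument. However, there is a genuine gap in the contraction step as you have set it up. You propose to estimate the differences $(\bar\phi^{k+1},\bar u^{k+1},\bar h^{k+1},\bar\psi^{k+1})$ directly and to rely on the $\eta$-independent \emph{lower} bound $h^{k}>1/(2c_0)$. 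But the terms that obstruct the contraction require an $\eta$-independent \emph{upper} bound on $h$, which does not exist: $h=\phi^{2e}$ with $e<0$ blows up in the far field as $\eta\to 0$ (only $\varphi=h^{-1}$ is uniformly bounded, $|\varphi|_\infty\le 2c_0$). Concretely, the difference of the viscous terms produces $a\,\bar h^{k+1}Lu^{k}$, the difference form of the $h$-equation produces $h^{k-1}\mathrm{div}\,\bar u^{k}$, and the difference form of the $\psi$-equation produces $h^{k-1}\nabla\mathrm{div}\,\bar u^{k}$; each of these is controlled only through $|h^{k-1}|_\infty$ or through $\bar h^{k+1}$ paired with $\nabla^2 u^k$ (of which only the weighted combination $h\nabla^2u$ is uniformly bounded). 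Any Gronwall constant obtained this way depends on $\eta$, so the resulting $T_*$ would too, defeating the purpose of the theorem.

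The paper's resolution, which is the one essential idea missing from your plan, is to recast the iteration in the variables $\varphi^{k+1}=(h^{k+1})^{-1}$ and $f^{k+1}=\psi^{k+1}\varphi^{k+1}$ and to rewrite the momentum equation as $\varphi^{k+1}(u^{k+1}_t+u^k\cdot\nabla u^k+\nabla\phi^{k+1})=-aLu^{k+1}+f^{k+1}\cdot Q(u^k)$ (system (\ref{li6zx})). The contraction is then carried out for $(\bar\phi^{k+1},\bar u^{k+1},\bar f^{k+1},\bar\varphi^{k+1})$: the Lam\'e operator now appears undegenerate, giving dissipation $a\alpha|\nabla\bar u^{k+1}|^2_2$ without any weight; all coefficients ($\varphi$, $f$, and the combinations $h^k\nabla\mathrm{div}\,u^k$, $h^k\mathrm{div}\,u^k$ appearing through $(\varphi^{k})^{-1}\varphi^{k+1}=1+\bar\varphi^{k+1}h^k$) are uniformly bounded by (\ref{jkk}); and the needed $D^2$ control of $\bar u^{k}$ comes from the non-degenerate elliptic estimate $|\bar u^k|_{D^2}\le C(|\sqrt{\varphi^k}\bar u^k_t|_2+\dots)$. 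Strong convergence of $\psi^k$ is then recovered a posteriori from $\psi^k=f^k/\varphi^k$. You invoke $\varphi^k$ and $f^k$ only in the compactness step, where they are convenient; in fact they are indispensable already in the contraction step, and without them that step does not close uniformly in $\eta$.
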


The proof  is given by an  iteration scheme based on the estimates for the linearized problem    obtained  in  Sections 3.2-3.4. As in Section 3.3, we define constants $c_{i}$ ($i=1,...,5$).

Let $(\phi^0, u^0,h^0)$ be the solution to the  following Cauchy problem
\begin{equation}\label{zheng6}
\begin{cases}
 X_t+u_0 \cdot \nabla X=0 \quad \text{in} \quad (0,+\infty)\times \mathbb{R}^3,\\[12pt]
Y_t- Z\triangle Y=0 \quad \  \ \  \ \text{in} \quad  (0,+\infty)\times \mathbb{R}^3 ,\\[12pt]
Z_t+u_0 \cdot \nabla Z=0\quad \  \text{in} \quad (0,+\infty)\times \mathbb{R}^3,\\[12pt]
(X,Y,Z)|_{t=0}=(\phi_0,u_0,h_0)=(\phi_0,u_0,\phi^{2e}_0) \quad \text{in} \quad \mathbb{R}^3,\\[10pt]
(X,Y, Z)\rightarrow (\phi^\infty, 0, h^\infty=(\phi^\infty)^{2e}) \quad \text{as } \quad |x|\rightarrow +\infty,\quad t> 0.
\end{cases}
\end{equation}
Choose a time $T^{**}\in (0,T^*]$ small enough such that
\begin{equation}\label{jizhu}
\begin{split}
\sup_{0\leq t \leq T^{**}}\|\nabla h^0(t)\|^2_{D^1\cap D^2}\leq c^2_1,\ \ \sup_{0\leq t \leq T^{**}}\| u^0(t)\|^2_{1}+\int_{0}^{T^{**}} \Big( |u^0|^2_{D^2}+|u^0_t|^2_{2}\Big)\text{d}t \leq& c^2_2,\\[9pt]
\sup_{0\leq t \leq T^{**}}\big(|u^0|^2_{D^2}+|u^0_t|^2_{2}+|h^0\nabla^2u^0|^2_{2}\big)(t)+\int_{0}^{T^{**}} \Big( |u^0|^2_{D^3}+|u^0_t|^2_{D^1}\Big)\text{d}t \leq& c^2_3,\\[9pt]
\sup_{0\leq t \leq T^{**}}\big(|u^0|^2_{D^3}+|u^0_t|^2_{D^1}+| h^0_t|^2_{D^1}\big)(t)+\int_{0}^{T^{**}} \Big( | u^0|^2_{D^4}+|u^0_t|^2_{D^2}+|u^0_{tt}|^2_2\Big)\text{d}t \leq& c^2_4,\\[9pt]
\sup_{0\leq t \leq T^{**}}\big(|h^0\nabla^2u^0|^2_{D^1}+|h^0_t|^2_\infty\big)(t)+\int_{0}^{T^{**}} \Big(|(h^0\nabla^2 u^0)_t|^2_{2}+|h^0\nabla^2 u^0|^2_{D^2}\Big)\text{d}t \leq& c^2_4,\\[9pt]
\text{ess}\sup_{0\leq t \leq T^{**}}t\big(|u^0_t|^2_{D^2}+|u^0|^2_{D^4}+|u^0_{tt}|^2_{2}\big)(t)+\int_{0}^{T^{**}} t\big(|u^0_{tt}|^2_{D^1}+|u^0_{t}|^2_{D^3}\big)\text{d}t \leq& c^2_5.
\end{split}
\end{equation}

\begin{proof}The existence, uniqueness and time continuity can be proved as follows.

\textbf{Step 1:} Existence.
Let the beginning step of our iteration be
$(v,g)=(u^0,h^0)$.  Thus one can get a classical solution $(\phi^1, u^1, h^1)$ of problem (\ref{li4*}). Inductively, one  constructs approximate sequences $(\phi^{k+1}, u^{k+1}, h^{k+1})$ as follows: given $(u^{k}, h^k)$ for $k\geq 1$, define $(\phi^{k+1},u^{k+1},h^{k+1})$  by solving the following problem:
\begin{equation}\label{li6}
\begin{cases}
\displaystyle
\phi^{k+1}_t+u^{k}\cdot \nabla \phi^{k+1}+(\gamma-1)\phi^{k+1}\text{div} u^{k}=0,\\[12pt]
\displaystyle
u^{k+1}_t+u^{k}\cdot\nabla u^{k} +\nabla \phi^{k+1}+a h^{k+1}Lu^{k+1}=\psi^{k+1}\cdot Q(u^{k}),\\[12pt]
\displaystyle
h^{k+1}_t+u^{k}\cdot \nabla h^{k+1}+(\delta-1)h^k\text{div} u^k=0,\\[6pt]
\displaystyle
(\phi^{k+1}, u^{k+1},  h^{k+1})|_{t=0}=(\phi_0, u_0, \phi^{2e}_0),\\[12pt]
(\phi^{k+1}, u^{k+1}, h^{k+1})\rightarrow (\phi^\infty, 0, h^\infty=(\phi^\infty)^{2e}),\quad \text{as}\quad  |x|\rightarrow +\infty,\quad t\geq0.
 \end{cases}
\end{equation}
This problem can be solved from \eqref{li4*} by replacing $(v,g)$ with $ (u^{k},h^k)$, and  $(\phi^{k},u^{k}, h^{k})$ $(k=1,2,...)$  satisfy the uniform estimates (\ref{jkk}).

Denote
$$\psi^{k+1}=\frac{a\delta}{\delta-1}\nabla h^{k+1},\quad  f^{k+1}=(h^{k+1})^{-1}\psi^{k+1}=\frac{a\delta}{\delta-1}\nabla h^{k+1}/h^{k+1},\quad \varphi^{k+1}=(h^{k+1})^{-1}.$$
Then
problem (\ref{li6}) can be rewritten as
\begin{equation}\label{li6zx}
\begin{cases}
\displaystyle
\quad \phi^{k+1}_t+u^{k}\cdot \nabla \phi^{k+1}+(\gamma-1)\phi^{k+1}\text{div} u^{k}=0,\\[6pt]
\displaystyle
\quad  \varphi^{k+1}\big(u^{k+1}_t+u^{k}\cdot\nabla u^{k} +\nabla \phi^{k+1}\big)=-a Lu^{k+1}+f^{k+1}\cdot Q(u^{k}),\\[6pt]
\displaystyle
\quad  f^{k+1}_t+\sum_{l=1}^3 A_l(u^k) \partial_lf^{k+1}+B^*(u^k)f^{k+1}+a\delta(\varphi^{k})^{-1} \varphi^{k+1}  \nabla \text{div}u^k\\[12pt]
=-a\delta \varphi^{k+1}\nabla h^k\text{div}u^k+(\delta-1)(\varphi^{k})^{-1}\varphi^{k+1}f^{k+1}\text{div}u^k,\\[8pt]
\displaystyle
\quad  \varphi^{k+1}_t+u^k\cdot \nabla \varphi^{k+1}-(\delta-1)(\varphi^{k})^{-1} (\varphi^{k+1})^2\text{div} u^k=0,\\[12pt]
\displaystyle
\quad  (\phi^{k+1},  u^{k+1},  f^{k+1}, \varphi^{k+1})|_{t=0}=\Big(\phi_0,  u_0, \phi^{-2e}_0\psi_0,\phi^{-2e}_0\Big),\\[12pt]
\quad  (\phi^{k+1}, u^{k+1}, f^{k+1}, \varphi^{k+1})\rightarrow (\phi^\infty, 0, 0,\big(\phi^\infty\big)^{-2e}),\ \text{as}\   |x|\rightarrow +\infty,\  t\geq 0.
 \end{cases}
\end{equation}

\textbf{Step 1.1:} Strong convergence of $(\phi^k, u^k,f^k,\varphi^k)$.
Now we are going to prove that the whole sequence $(\phi^k, u^k, f^k, \varphi^k)$ converges  to a limit $(\phi,u,f,\varphi)$ in some strong sense.
Set
\begin{equation*}\begin{split}
&\overline{\phi}^{k+1}=\phi^{k+1}-\phi^k,\    \overline{u}^{k+1}=u^{k+1}-u^k,\  \overline{f}^{k+1}=f^{k+1}-f^k,\ \overline{\varphi}^{k+1}=\varphi^{k+1}-\varphi^k.
\end{split}
\end{equation*}

Notice that
\begin{equation*}\begin{split}
(\varphi^{k})^{-1}\varphi^{k+1}  \nabla \text{div}u^k=&\overline{\varphi}^{k+1}h^k \nabla\text{div}u^k+\nabla \text{div}u^k,\\[8pt]
\varphi^{k+1}\nabla h^k\text{div}u^k=&\frac{\delta-1}{a\delta}f^k\text{div}u^k\Big(1+\overline{\varphi}^{k+1}h^k\Big),\\[8pt]
(\varphi^{k})^{-1}\varphi^{k+1}f^{k+1}\text{div}u^k=&f^{k+1}\text{div}u^k\big(1+\overline{\varphi}^{k+1}h^k\big),\\[8pt]
(\varphi^{k})^{-1} (\varphi^{k+1})^2\text{div} u^k=&\overline{\varphi}^{k+1}\varphi^{k+1}h^k \text{div}u^k+\varphi^k \text{div}u^k+\overline{\varphi}^{k+1}\text{div} u^k.
\end{split}
\end{equation*}
Then it follows from (\ref{li6zx}) that
 \begin{equation}
\label{eq:1.2w}
\begin{cases}
  \displaystyle
\quad \overline{\phi}^{k+1}_t+u^k\cdot \nabla\overline{\phi}^{k+1} +\overline{u}^k\cdot\nabla\phi ^{k}+(\gamma-1)(\overline{\phi}^{k+1} \text{div}u^k +\phi ^{k}\text{div}\overline{u}^k)=0,\\[8pt]
\quad  \varphi^{k+1}\big(\overline{u}^{k+1}_t+u^k\cdot\nabla \overline{u}^{k} +\overline{u}^{k} \cdot \nabla u^{k-1}\big)+\varphi^{k+1}\nabla \overline{\phi}^{k+1}+\overline{\varphi}^{k+1} \nabla \phi^k+a L\overline{u}^{k+1} \\[8pt]
=-\overline{\varphi}^{k+1} (u^k_t+u^{k-1} \cdot \nabla u^{k-1})+f^{k+1}\cdot Q(\overline{u}^k)+\overline{f}^{k+1}\cdot Q(u^{k-1}),\\[8pt]
\displaystyle
\quad \overline{f}^{k+1}_t+\sum_{l=1}^3 A_l(u^k) \partial_l\overline{f}^{k+1}+B^*(u^k)\overline{f}^{k+1}+a \delta \nabla \text{div}\overline{u}^k=\Upsilon^k_1+\Upsilon^k_2+\Upsilon^k_3,\\[8pt]
  \displaystyle
\quad
\overline{\varphi}^{k+1}_t+u^k\cdot \nabla\overline{\varphi}^{k+1} +\overline{u}^k\cdot\nabla\varphi ^{k}+(1-\delta)(\overline{\varphi}^{k} \text{div}u^k +\varphi ^{k-1}\text{div}\overline{u}^k+\Upsilon^k_4)=0,\\[8pt]
\end{cases}
\end{equation}
where  $\Upsilon^k_i$  $(i=1,...,4)$ are defined respectively as:
\begin{equation*}
\begin{split}
\Upsilon^k_1=&(1-\delta)\Big(\overline{f}^{k} \text{div}u^k +f^{k-1}\text{div}\overline{u}^k+\overline{\varphi}^{k+1}f^kh^k\text{div}u^k-\overline{\varphi}^{k}f^{k-1}h^{k-1}\text{div}u^{k-1}\Big),\\
\Upsilon^k_2=&-\sum_{l=1}^3(A_l(u^k) -A_l(u^{k-1}) )\partial_lf^{k}-(B^*(u^k) -B^*(u^{k-1}))f^{k}\\
&-a\delta\big(\overline{\varphi}^{k+1}h^k \nabla\text{div}u^k-\overline{\varphi}^{k}h^{k-1} \nabla\text{div}u^{k-1}\big),\\
\Upsilon^k_3=&(\delta-1)\Big(f^{k+1}\text{div}u^k\overline{\varphi}^{k+1}h^k-f^{k}\text{div}u^{k-1}\overline{\varphi}^{k}h^{k-1}+\overline{f}^{k+1} \text{div}u^k +f ^{k}\text{div}\overline{u}^k\Big),\\
\Upsilon^k_4=&\overline{\varphi}^{k+1}\varphi^{k+1}h^k \text{div}u^k-\overline{\varphi}^{k}\varphi^{k}h^{k-1} \text{div}u^{k-1}+\overline{\varphi}^{k+1}\text{div} u^k-\overline{\varphi}^{k}\text{div} u^{k-1}.
\end{split}
\end{equation*}

For $\overline{\phi}^{k+1}$,  multiplying $(\ref{eq:1.2w})_1$ by $2\overline{\phi}^{k+1}$ and integrating over $\mathbb{R}^3$ give
\begin{equation}\label{kaishi1}
\begin{split}
\frac{d}{dt}|\overline{\phi}^{k+1}|^2_2
\leq& C\big(|\nabla u^k|_\infty|\overline{\phi}^{k+1}|_2+|\overline{u}^k|_6|\nabla \phi^k|_3+|\nabla\overline{u}^k|_2| \phi^k|_\infty\big)|\overline{\phi}^{k+1}|_2.
\end{split}
\end{equation}

Then, applying  derivative $\partial_{x}^{\zeta}$ ($|\zeta|=1$) to $(\ref{eq:1.2w})_{1}$, multiplying by $2 \partial_{x}^\zeta\overline{\phi}^{k+1}$ and  integrating over $\mathbb{R}^3$, one gets
\begin{equation*}
\begin{split}
\frac{d}{dt}|\partial_{x}^\zeta\overline{\phi}^{k+1}|^2_{2}
\leq& C\big(|\nabla u^k|_\infty |\nabla \overline{\phi}^{k+1}|_{2}+|\nabla \phi^k|_{\infty}| \nabla \overline{u}^k|_2+|  \overline{u}^k|_6|\nabla^2 \phi^{k}|_3\big)|\nabla \overline{\phi}^{k+1}|_2\\
&+C\big(|\nabla^2 u^k|_3 | \overline{\phi}^{k+1}|_{6}+|\nabla \overline{u}^k|_2 |\nabla \phi^{k}|_\infty+| \phi^k|_{\infty}  |\nabla \text{div} \overline{u}^k|_2\big)   |\nabla \overline{\phi}^{k+1}|_{2},
\end{split}
\end{equation*}
which, along with (\ref{kaishi1}), implies that   for $t\in[0,T^{**}]$,
\begin{equation}\label{go64qqqqqqzxzx}
\frac{d}{dt}\| \overline{\phi}^{k+1}(t)\|^2_1\leq C\sigma^{-1}\| \overline{\phi}^{k+1}(t)\|^2_1+\sigma\|\nabla \overline{u}^k(t)\|^2_1,
\end{equation}
 where $\sigma\in (0,1)$ is a constant to be determined.

For $\overline{f}^{k+1}$, multiplying $(\ref{eq:1.2w})_3$ by $2\overline{f}^{k+1}$ and integrating over $\mathbb{R}^3$ yield
\begin{equation}\label{go64aa}
\begin{split}
\frac{d}{dt}|\overline{f}^{k+1}|^2_2\leq& C|\nabla u^k |_\infty|\overline{f}^{k+1}|^2_2+C\Big(\sum_{i=1}^3|\Upsilon^k_i |_2+|\nabla^2 \overline{u}^k|_2\Big)|\overline{f}^{k+1}|_2\\
\leq & C\sigma^{-1}|\overline{f}^{k+1}|^2_2+C\|\overline{\varphi}^{k+1}\|^2_1+\sigma (\|\nabla \overline{u}^k\|^2_1+ \|\overline{\varphi}^{k}\|^2_1+ |\overline{f}^{k}|^2_2),
\end{split}
\end{equation}
where one has used the fact that
\begin{equation}\label{go64aa1}
\begin{split}
|\Upsilon^k_1|_2\leq &C\big(|\overline{f}^{k}|_2|\nabla u^k|_\infty+|\nabla \overline{u}^{k}|_2|f^{k-1} |_\infty+| f^{k}|_\infty |\overline{\varphi}^{k+1}|_2|h^k \nabla u^k|_\infty\big)\\
&+C| f^{k-1}|_\infty|h^{k-1} \nabla u^{k-1}|_\infty|\overline{\varphi}^k|_2,\\
|\Upsilon^k_2 |_2\leq& C\big(|\nabla f^{k}|_3 |\overline{u}^k|_6+| f^{k}|_\infty | \nabla \overline{u}^k|_2\big)\\
&+C\big(|\overline{\varphi}^{k+1}|_6|h^k \nabla \text{div}u^k|_3+|\overline{\varphi}^{k}|_6|h^{k-1} \nabla\text{div}u^{k-1}|_3\big),\\
|\Upsilon^k_3 |_2\leq& C\big(| f^{k+1}|_\infty |\overline{\varphi}^{k+1}|_2|h^k \text{div}u^k|_\infty+| f^{k}|_\infty|h^{k-1} \text{div}u^{k-1}|_\infty|\overline{\varphi}^k|_2\big)\\
&+\big(|\nabla u^{k}|_\infty |\overline{f}^{k+1}|_2+| f^{k}|_\infty | \nabla \overline{u}^k|_2\big).
\end{split}
\end{equation}

In the rest of the proof of this subsection, set

\begin{equation}
\begin{split}
R^k(t)=&(|\nabla u^k|_\infty+|\varphi^{k+1}|_\infty|h^k\nabla u^{k}|_\infty),\quad
S^k(t)=(|u^k_t|_3+|u^{k-1}|_\infty|\nabla u^{k-1}|_3),\\[6pt]
 M^k(t)=&(|f^k|_\infty+|f^{k-1}|_\infty)|h^{k-1}\text{div}u^{k-1}|_\infty.
\end{split}
\end{equation}
Then for $\overline{\varphi}^{k+1}$,  multiplying $(\ref{eq:1.2w})_4$ by $2\overline{\varphi}^{k+1}$ and integrating over $\mathbb{R}^3$ lead to
\begin{equation}\label{go64zx}
\begin{split}
\frac{d}{dt}|\overline{\varphi}^{k+1}|^2_2
\leq& CR^k(t)|\overline{\varphi}^{k+1}|^2_2+C|\overline{u}^k|_3|\nabla \varphi^k|_6 |\overline{\varphi}^{k+1}|_2\\
&+C\big(|\nabla\overline{u}^k|_2| \varphi^{k-1}|_\infty+|\overline{\varphi}^{k}|_2 |\nabla u^k|_\infty\big)|\overline{\varphi}^{k+1}|_2+CR^{k-1}|\overline{\varphi}^{k}|_2|\overline{\varphi}^{k+1}|_2.
\end{split}
\end{equation}

Then,  applying  $\partial_{x}^{\zeta}$ ($|\zeta|=1$) to $(\ref{eq:1.2w})_{4}$, multiplying by $2 \partial_{x}^\zeta\overline{\varphi}^{k+1}$ and  integrating over $\mathbb{R}^3$,  one has
\begin{equation*}
\begin{split}
&\frac{d}{dt}|\partial_{x}^\zeta\overline{\varphi}^{k+1}|^2_{2}\\
\leq& C\big(R^k+|h^k\nabla u^k|_6|\nabla \varphi^{k+1} |_6+|\varphi^{k+1}|_\infty|h^k\nabla^2 u^k|_3+|\nabla^2 u^k|_3\big)|\nabla \overline{\varphi}^{k+1}|^2_2\\
&+C\big(|\nabla \varphi^{k-1}|_{6}| \nabla \overline{u}^k|_3+|\overline{\varphi}^{k+1}|_2|\varphi^{k+1}|_\infty|\psi^k|_\infty| \nabla u^k|_\infty)\big)|\nabla \overline{\varphi}^{k+1}|_2\\
&+C\big(|  \overline{u}^k|_6|\nabla^2 \varphi^{k}|_3+|\nabla^2 u^k|_3 | \overline{\varphi}^{k}|_{6} +|\nabla u^k|_\infty|\nabla \overline{\varphi}^k|_2\big) |\nabla \overline{\varphi}^{k+1}|_{2}\\
&+C\big(|\nabla \overline{u}^k|_3 |\nabla \varphi^{k-1}|_6+| \varphi^{k-1}|_{\infty}  |\nabla^2 \overline{u}^k|_2\big)   |\nabla \overline{\varphi}^{k+1}|_{2}\\
&+C( R^{k-1}+|\varphi^k|_\infty|\psi^{k-1}|_\infty|\nabla u^{k-1}|_3+|\nabla^2 u^{k-1}|_3)|\nabla \overline{\varphi}^{k}|_{2}  |\nabla \overline{\varphi}^{k+1}|_{2}\\
&+C\big(|\varphi^k|_\infty|h^{k-1}\nabla^2 u^{k-1}|_3+|\nabla\varphi^k|_6|h^{k-1}\nabla u^{k-1}|_6\big)|\nabla \overline{\varphi}^{k}|_{2} |\nabla \overline{\varphi}^{k+1}|_{2},
\end{split}
\end{equation*}
which, along with (\ref{go64zx}),  implies that for $t\in[0,T^{**}]$
\begin{equation}\label{go64qqqqqq}
\frac{d}{dt}\| \overline{\varphi}^{k+1}(t)\|^2_1\leq C\sigma^{-1}\| \overline{\varphi}^{k+1}(t)\|^2_1+\sigma \big(\|\nabla \overline{u}^k\|^2_1+\| \overline{\varphi}^k\|^2_1\big).
\end{equation}

For $\overline{u}^{k+1}$, multiplying $(\ref{eq:1.2w})_2$ by $2\overline{u}^{k+1}$ and integrating over $\mathbb{R}^3$ yield
\begin{equation}\label{shan101}
\begin{split}
&\frac{d}{dt}|\sqrt{\varphi}^{k+1}\overline{u}^{k+1}|^2_2+2a\alpha |\nabla\overline{u}^{k+1} |^2_2+2a(\alpha+\beta) |\text{div}\overline{u}^{k+1} |^2_2\\
=& \int \Big( \varphi^{k+1}_t|\overline{u}^{k+1}|^2 -2\varphi^{k+1}\big(u^k \cdot \nabla \overline{u}^{k}+\overline{u}^{k} \cdot \nabla u^{k-1}\big)\cdot \overline{u}^{k+1} \Big) \\
&+2\int \Big(-\overline{\varphi}^{k+1} (u^k_t+u^{k-1} \cdot \nabla u^{k-1})-\varphi^{k+1}\nabla \overline{\phi}^{k+1}-\overline{\varphi}^{k+1} \nabla \phi^k\Big) \cdot \overline{u}^{k+1}\\
&+2\int\Big(f^{k+1}\cdot Q(\overline{u}^k)+\overline{f}^{k+1}\cdot Q(u^{k-1}) \Big) \cdot \overline{u}^{k+1}=\sum_{j=1}^{9}J_j.
\end{split}
\end{equation}
Now the terms on the right-hand side of (\ref{shan101}) can be estimated as follows:
\begin{equation}\label{shan102}
\begin{split}
J_1=&\int \varphi^{k+1}_t|\overline{u}^{k+1}|^2
\leq C|u^k|_\infty |\varphi^{k+1}|^{\frac{1}{2}}_\infty |\nabla \overline{u}^{k+1}|_2|\sqrt{\varphi}^{k+1}\overline{u}^{k+1}|_2\\
&+C\big(|\nabla u^k|_\infty+|h^k\nabla u^k|_\infty |\varphi^{k+1}|_\infty\big)|\sqrt{\varphi}^{k+1}\overline{u}^{k+1}|^2_2,\\
\end{split}
\end{equation}
and
\begin{equation}\label{shan102-2}
\begin{split}
J_2+J_3=& \int  -2\varphi^{k+1}\big(u^k \cdot \nabla \overline{u}^{k}+\overline{u}^{k} \cdot \nabla u^{k-1}\big)\cdot \overline{u}^{k+1}  \\
\leq & C|\varphi^{k+1}|^{\frac{1}{2}}_\infty|\sqrt{\varphi}^{k+1}\overline{u}^{k+1}|_2\big(|\nabla \overline{u}^k|_2|u^k|_\infty+|\overline{u}^k|_6|\nabla u^{k-1}|_3\big),\\
J_4+J_5=&-2\int \overline{\varphi}^{k+1} (u^k_t+u^{k-1} \cdot \nabla u^{k-1})\cdot \overline{u}^{k+1}\\
\leq & CS^k(t) |\overline{\varphi}^{k+1}|_2|\overline{u}^{k+1}|_6,\\
J_6+J_7=&-2\int \Big(\varphi^{k+1}\nabla \overline{\phi}^{k+1}+\overline{\varphi}^{k+1} \nabla \phi^k\Big) \cdot \overline{u}^{k+1}\\
\leq & C|\varphi^{k+1}|^{\frac{1}{2}}_\infty|\sqrt{\varphi}^{k+1}\overline{u}^{k+1}|_2|\nabla \overline{\phi}^{k+1}|_2+C|\overline{\varphi}^{k+1}|_2 |\nabla \phi^k|_3 |\overline{u}^{k+1}|_6,\\
J_8+J_9=&2\int\Big(f^{k+1}\cdot Q(\overline{u}^k)+\overline{f}^{k+1}\cdot Q(u^{k-1}) \Big) \cdot \overline{u}^{k+1}\\
=&2\int\Big(\varphi^{k+1}\psi^{k+1}\cdot Q(\overline{u}^k)+\overline{f}^{k+1}\cdot Q(u^{k-1}) \Big) \cdot \overline{u}^{k+1}\\
\leq & C|\varphi^{k+1}|^{\frac{1}{2}}_\infty|\sqrt{\varphi}^{k+1}\overline{u}^{k+1}|_2|\psi^{k+1}|_\infty|\nabla \overline{u}^k|_2+C|\overline{f}^{k+1}|_2 |\nabla u^{k-1}|_3 | \overline{u}^{k+1}|_6,
\end{split}
\end{equation}
where one has used  the equation $(\ref{li6zx})_4$ for the term $\varphi^{k+1}_t$.

It then follows from (\ref{shan101})-(\ref{shan102-2}) and Young's inequality that
\begin{equation}\label{shan201}
\begin{split}
&\frac{d}{dt}|\sqrt{\varphi}^{k+1}\overline{u}^{k+1}|^2_2+2a\alpha |\nabla\overline{u}^{k+1} |^2_2+2a(\alpha+\beta) |\text{div}\overline{u}^{k+1} |^2_2\\
\leq &C\sigma^{-1}|\sqrt{\varphi}^{k+1}\overline{u}^{k+1}|^2_2+\sigma |\nabla \overline{u}^k|^2_2+C\big(\|\overline{\phi}^{k+1}\|^2_1+\|\overline{\varphi}^{k+1}\|^2_1+|\overline{f}^{k+1}|^2_2\big).
\end{split}
\end{equation}
Next multiplying $(\ref{eq:1.2w})_2$ by 2$\overline{u}^{k+1}_t$ and  integrating it over $\mathbb{R}^3$ show that
\begin{equation}\label{shan309}
\begin{split}
&2|\sqrt{\varphi}^{k+1}\overline{u}^{k+1}_t|^2_{2}+\frac{d}{dt}\big(\alpha|\nabla \overline{u}^{k+1} |^2_2+(\alpha+\beta)|  \text{div} \overline{u}^{k+1} |^2_2\big)\\
=&2 \int \Big(  -\varphi^{k+1}\big(u^k \cdot \nabla \overline{u}^{k}+\overline{u}^{k} \cdot \nabla u^{k-1}\big)\cdot \overline{u}^{k+1}_t \Big) \\
&+2\int \Big(-\overline{\varphi}^{k+1} (u^k_t+u^{k-1} \cdot \nabla u^{k-1})-\varphi^{k+1}\nabla \overline{\phi}^{k+1}-\overline{\varphi}^{k+1} \nabla \phi^k\Big) \cdot \overline{u}^{k+1}_t\\
&+2\int\Big(f^{k+1}\cdot Q(\overline{u}^k)+\overline{f}^{k+1}\cdot Q(u^{k-1}) \Big) \cdot \overline{u}^{k+1}_t=\sum_{j=10}^{17} J_j.
\end{split}
\end{equation}
Now the terms on the right-hand side of (\ref{shan309}) admit the following estimates
\begin{equation}\label{shanjigg}
\begin{split}
J_{10}+&J_{11}= 2\int -\varphi^{k+1}\big(u^k \cdot \nabla \overline{u}^{k}+\overline{u}^{k} \cdot \nabla u^{k-1}\big)\cdot \overline{u}^{k+1}_t  \\
\leq & C|\varphi^{k+1}|^{\frac{1}{2}}_\infty|\sqrt{\varphi}^{k+1}\overline{u}^{k+1}_t|_2\big(|\nabla \overline{u}^k|_2|u^k|_\infty+|\overline{u}^k|_6|\nabla u^{k-1}|_3\big),\\
\end{split}
\end{equation}
and
\begin{equation}\label{shanjie1}
\begin{split}
J_{12}+&J_{13}=-2\int \overline{\varphi}^{k+1} (u^k_t+u^{k-1} \cdot \nabla u^{k-1})\cdot \overline{u}^{k+1}_t\\
=&-2\frac{d}{dt}\int \overline{\varphi}^{k+1} (u^k_t+u^{k-1} \cdot \nabla u^{k-1})\cdot \overline{u}^{k+1}\\
&+2\int \big(\overline{\varphi}^{k+1} (u^k_t+u^{k-1} \cdot \nabla u^{k-1})\big)_t\cdot \overline{u}^{k+1}\\
\leq &-2\frac{d}{dt}\int \overline{\varphi}^{k+1} (u^k_t+u^{k-1} \cdot \nabla u^{k-1})\cdot \overline{u}^{k+1}\\
&+ C|\overline{\varphi}^{k+1}|_3|\overline{u}^{k+1}|_6\big( |u^k_{tt}|_2+|u^{k-1}_t|_2|\nabla u^{k-1}|_\infty+|u^{k-1}|_\infty |\nabla u^{k-1}_t|_2\big)\\
&+C|\overline{u}^{k+1}|_6\big(|u^k|_\infty|\nabla\overline{\varphi}^{k+1}|_2+|\overline{u}^k|_3|\nabla \varphi^k|_6+|\nabla u^k|_\infty|\overline{\varphi} ^{k}|_2\big)S^k(t)\\
&+C| \overline{u}^{k+1}|_6\big(|\overline{\varphi}^{k+1}|_2R^k(t)+|\overline{\varphi}^{k}|_2R^{k-1}(t)+|\varphi^{k-1}|_\infty|\nabla\overline{u}^{k}|_2\big)S^k(t),\\
J_{14}=&-2\int \varphi^{k+1}\nabla \overline{\phi}^{k+1} \cdot \overline{u}^{k+1}_t\text{d}x
\leq  C|\varphi^{k+1}|^{\frac{1}{2}}_\infty|\sqrt{\varphi}^{k+1}\overline{u}^{k+1}_t|_2|\nabla \overline{\phi}^{k+1}|_2,\\
J_{15}=&-2\int \overline{\varphi}^{k+1} \nabla \phi^k \cdot \overline{u}^{k+1}_t=-2\frac{d}{dt}\int \overline{\varphi}^{k+1} \nabla \phi^k \cdot \overline{u}^{k+1}\\
&+2\int \overline{\varphi}^{k+1}_t \nabla \phi^k \cdot \overline{u}^{k+1}+2\int \overline{\varphi}^{k+1} \nabla \phi^k_t \cdot \overline{u}^{k+1}\\
\leq &-2\frac{d}{dt}\int\overline{\varphi}^{k+1} \nabla \phi^k \cdot \overline{u}^{k+1}\text{d}x+C|\overline{\varphi}^{k+1}|_3 |\nabla \phi^k_t|_2 |\overline{u}^{k+1}|_6\\
&+C|\overline{u}^{k+1}|_6|\nabla \phi^k|_3\big(|\nabla \overline{\varphi}^{k+1}|_2|u^k|_\infty+|\overline{u}^{k}|_2|\nabla \phi^k|_\infty\big)\\
&+C|\overline{u}^{k+1}|_6|\nabla \phi^k|_3\big(| \overline{\varphi}^{k}|_2|\nabla u^k|_\infty+|\varphi^{k-1}|_\infty |\nabla \overline{u}^k|_2\big)\\
&+C| \overline{u}^{k+1}|_6|\nabla \phi^k|_3\big(|\overline{\varphi}^{k+1}|_2R^k(t)+|\overline{\varphi}^{k}|_2R^{k-1}(t)\big),\\
J_{16}=&2\int f^{k+1}\cdot Q(\overline{u}^k)\cdot \overline{u}^{k+1}_t
\leq  C|\varphi^{k+1}|^{\frac{1}{2}}_\infty|\sqrt{\varphi}^{k+1}\overline{u}^{k+1}_t|_2|\psi^{k+1}|_\infty|\nabla \overline{u}^k|_2,\\
J_{17}=&2\int \overline{f}^{k+1}\cdot Q(u^{k-1}) \cdot \overline{u}^{k+1}_t
=2\frac{d}{dt}\int \overline{f}^{k+1}\cdot Q(u^{k-1})  \cdot \overline{u}^{k+1}\\
&-2\int \overline{f}^{k+1}\cdot Q(u^{k-1})_t  \cdot \overline{u}^{k+1}\text{d}x-2\int\overline{f}^{k+1}_t\cdot Q(u^{k-1})  \cdot \overline{u}^{k+1}\\
\leq &2\frac{d}{dt}\int\overline{f}^{k+1}\cdot Q(u^{k-1})  \cdot \overline{u}^{k+1}\text{d}x+ C|\nabla\overline{u}^{k+1}|_2|\nabla\overline{u}^{k}|_2|\nabla u^{k-1}|_\infty\\
&+ C|\nabla\overline{u}^{k+1}|_2|\nabla\overline{u}^{k}|_2|\nabla^2 u^{k-1}|_3+C|\overline{f}^{k+1}|_2|\nabla\overline{u}^{k+1}|_2|\nabla u^{k}|_6|\nabla u^{k-1}|_6\\
&+C|\overline{f}^{k+1}|_2|\nabla\overline{u}^{k+1}|_2\big(| u^{k}|_6|\nabla^2 u^{k-1}|_6+| u^{k}|_\infty|\nabla u^{k-1}|_\infty\big)\\
&+C|\overline{f}^{k+1}|_2 | \overline{u}^{k+1}|_6\big( |\nabla u^{k-1}_t|_3+|\nabla u^{k}|_6|\nabla u^{k-1}|_6)\\
&+C|\overline{u}^{k+1}|_6|\nabla u^{k-1}|_3\big(|\overline{u}^{k}|_6|\nabla f^k|_3+|f^k|_\infty|\nabla\overline{u}^{k}|_2\big)\\
&+C\big(|\overline{\varphi}^{k+1}|_2|h^k \nabla \text{div}u^k|_\infty+|\overline{\varphi}^{k}|_2|h^{k-1} \nabla \text{div}u^{k-1}|_\infty\big)|\nabla u^{k-1}|_3|\overline{u}^{k+1}|_6\\
&+C|\nabla u^{k-1}|_3|\overline{u}^{k+1}|_6\big(|\nabla u^k|_\infty(|\overline{f}^k|_2+|\overline{f}^{k+1}|_2)+(|f^k|_\infty+|f^{k-1}|_\infty)|\nabla \overline{u}^{k}|_2\big)\\
&+C\big(M^k(t)|\overline{\varphi}^{k}|_2+M^{k+1}(t)|\overline{\varphi}^{k+1}|_2\big)|\nabla u^{k-1}|_\infty|\overline{u}^{k+1}|_2,
\end{split}
\end{equation}
 where one has used   the fact that
 \begin{equation*}
\begin{cases}
  \displaystyle
\overline{f}^{k+1}_t=-\sum_{l=1}^3 A_l(u^k) \partial_l\overline{f}^{k+1}-B^*(u^k)\overline{f}^{k+1}-a \delta \nabla \text{div}\overline{u}^k+\Upsilon^k_1+\Upsilon^k_2+\Upsilon^k_3,\\[10pt]
  \displaystyle
\overline{\varphi}^{k+1}_t=-u^k\cdot \nabla\overline{\varphi}^{k+1} -\overline{u}^k\cdot\nabla\varphi ^{k}-(1-\delta)(\overline{\varphi}^{k} \text{div}u^k +\varphi ^{k-1}\text{div}\overline{u}^k+\Upsilon^k_4).
\end{cases}
\end{equation*}

It follows from (\ref{shan309})-(\ref{shanjie1}) and Young's inequality that
\begin{equation}\label{shan309zx}
\begin{split}
&|\sqrt{\varphi}^{k+1}\overline{u}^{k+1}_t|^2_{2}+\frac{d}{dt}\big(a\alpha|\nabla \overline{u}^{k+1} |^2_2+a(\alpha+\beta)|  \text{div} \overline{u}^{k+1} |^2_2\big)\\
\leq &J_{18}+E^k_\sigma(t)|\nabla \overline{u}^{k+1}|^2_2+C|\nabla \overline{u}^k|^2_2+\sigma \big(|\overline{\varphi}^{k}|^2_2+|\overline{f}^{k}|^2_2\big)\\
&+C\big(\|\overline{\phi}^{k+1}\|^2_1+\|\overline{\varphi}^{k+1}\|^2_1+|\overline{f}^{k+1}|^2_2\big),
\end{split}
\end{equation}
where
\begin{equation*}\begin{split}
J_{18}=&-\frac{d}{dt}\int\Big(\overline{\varphi}^{k+1} \big(u^k_t+u^{k-1} \cdot \nabla u^{k-1}+ \nabla \phi^k\big)+\overline{f}^{k+1}\cdot Q(u^{k-1}) \Big)\cdot \overline{u}^{k+1},
\end{split}
\end{equation*}
and $E^k_\sigma(t)$ satisfies
\begin{equation}\label{shan333zx-cc}
\int_0^t E^k_\sigma(s)\text{d}s \leq C+C\sigma^{-1}t,\quad \text{for} \quad  0\leq t \leq T^{**}.
\end{equation}

%

Hence, (\ref{go64qqqqqqzxzx})-(\ref{go64aa}), (\ref{go64qqqqqq}),  (\ref{shan201}), (\ref{shan309zx})-(\ref{shan333zx-cc}) and the  Gronwall's inequality imply that
\begin{equation}\label{jiabiaozxlan}
\begin{split}
&\frac{d}{dt}\Big(|\sqrt{\varphi}^{k+1}\overline{u}^{k+1}|^2_2+\|\overline{\phi}^{k+1}\|^2_1+\|\overline{\varphi}^{k+1}\|^2_1+|\overline{f}^{k+1}|^2_2\\
&+a\alpha \nu |\nabla \overline{u}^{k+1} |^2_2+a(\alpha+\beta)\nu |  \text{div} \overline{u}^{k+1} |^2_2\Big)+\Big(a \alpha |\nabla\overline{u}^{k+1}|^2_2+\nu |\sqrt{\varphi}^{k+1}\overline{u}^{k+1}_t|^2_{2}\Big)\\
\leq &\nu J_{18}+ (N^k_\sigma (t)+\nu)\big(|\sqrt{\varphi}^{k+1}\overline{u}^{k+1}|^2_2+ a\alpha |\nabla \overline{u}^{k+1}|^2_2+\|\overline{\phi}^{k+1}\|^2_1+\|\overline{\varphi}^{k+1}\|^2_1+|\overline{f}^{k+1}|^2_2\big)\\
&+C(\nu+\sigma) a\alpha |\nabla \overline{u}^k|^2_2+C\sigma\big(\|\overline{\phi}^{k}\|^2_1+|\nabla^2 \overline{u}^k|^2_2+(1+\nu)(\|\overline{\varphi}^{k}\|^2_1+|\overline{f}^{k}|^2_2)\big),
\end{split}
\end{equation}
where $\nu \in (0,1)$ is a sufficiently small constant, and  $N^k_\epsilon(t)$ satisfies
\begin{equation}\label{shan333zx}
\int_0^t N^k_\sigma(s)\text{d}s \leq C+C\sigma^{-1}t,\quad \text{for} \quad  0\leq t \leq T^{**}.
\end{equation}

For $J_{18}$,    one has
\begin{equation}\label{shan3099zx}
\begin{split}
\int_0^t J_{18} \text{d}s \leq C\big(|\overline{u}^{k+1}|^2_2+\|\overline{\varphi}^{k+1}\|^2_1+|\overline{f}^{k+1}|^2_2\big).
\end{split}
\end{equation}

For the term $\nabla^2 \overline{u}^k$, according to equations $(\ref{eq:1.2w})_2$ and  Lemma \ref{zhenok}, one has
\begin{equation}\label{jiabiaozxtgb}
\begin{split}
|\overline{u}^{k}|_{D^2} \leq C\big(|\sqrt{\varphi}^{k}\overline{u}^{k}_t|_2+|\nabla \overline{u}^{k-1}|_2+\|\overline{\phi}^{k}\|_1+\|\overline{\varphi}^{k}\|_1+|\overline{f}^{k}|_2\big).
\end{split}
\end{equation}
Finally, define
\begin{equation*}\begin{split}
\Gamma^{k+1}(t,\nu)=&\sup_{0\leq s \leq t}\|\overline{\phi}^{k+1}(s)\|^2_{1}+\sup_{0\leq s \leq t}\|\overline{\varphi}^{k+1}(s)\|^2_{ 1}+\sup_{0\leq s \leq t}|\overline{f}^{k+1}(s)|^2_2\\
&+\sup_{0\leq s \leq t}  |\sqrt{\varphi}^{k+1}\overline{u}^{k+1}(s)|^2_{2}+a\nu \sup_{0\leq s \leq t}(\alpha |\nabla \overline{u}^{k+1}(s)|^2_2+(\alpha+\beta) |  \text{div} \overline{u}^{k+1}(s) |^2_2).
\end{split}
\end{equation*}
Then it follows from  (\ref{jiabiaozxlan})-(\ref{jiabiaozxtgb}) that
\begin{equation*}\begin{split}
&\Gamma^{k+1}(t, \nu)+\int_{0}^{t}\Big(a \alpha |\nabla\overline{u}^{k+1}|^2_2+\nu |\sqrt{\varphi}^{k+1}\overline{u}^{k+1}_t|^2_{2}\Big)\text{d}s\\
\leq& C \Big(\int_0^t \Big(a\alpha (\sigma+\nu) (|\nabla \overline{u}^k|^2_2+|\nabla \overline{u}^{k-1}|^2_2)+\sigma |\sqrt{\varphi}^{k}\overline{u}^{k}_t|^2_2\Big)\text{d}s+\sigma t\Gamma^{k}(t, \nu)\Big)\exp{\Big(C+C\sigma^{-1}t\big)}.
\end{split}
\end{equation*}

Now, choose $\nu=\nu_0\in (0,1)$, $\sigma=\sigma_0\in(0,1)$, and $T_0>0$ consecutively so that
$$
C\nu_0\exp{C}\leq \frac{1}{32},
$$
$$
C\sigma_0\exp{C}\leq \frac{\nu_0}{32},
$$
$$
\big(T_*+1) \exp{\Big(C\sigma^{-1}_0T_*\big)}\leq 4.
$$
Then one gets easily
\begin{equation}\label{dingmin}\begin{split}
\sum_{k=1}^{\infty}\Big(  \Gamma^{k+1}(T_*,\nu_0)+\int_{0}^{T_*} \Big(a \alpha|\nabla\overline{u}^{k+1}|^2_2+\nu_0 |\sqrt{\varphi}^{k+1}\overline{u}^{k+1}_t|^2_{2}\Big)\leq C<+\infty.
\end{split}
\end{equation}

Thanks to (\ref{dingmin}) and the local estimates  (\ref{jkk}) independent of $k$,  one has
\begin{equation}\label{dingmin1}\begin{split}
\lim_{k\mapsto +\infty} |\overline{f}^{k+1}|_{6}+\lim_{k\mapsto +\infty} |\overline{\varphi}^{k+1}|_{\infty}=0.
\end{split}
\end{equation}
Thus, by  (\ref{dingmin})-(\ref{dingmin1}), one concludes  that the whole sequence $(\phi^k,u^k,f^k,\varphi^k)$ converges to a limit $(\phi,u,f,\varphi)$ in the following strong sense: for any $s'\in [1,3)$,
\begin{equation}\label{str}
\begin{split}
&\phi^k-\phi^\infty \rightarrow \phi-\phi^\infty\ \text{in}\ L^\infty([0,T_*];H^{s'}(\mathbb{R}^3)),\quad f^k\rightarrow f\ \text{in}\ L^\infty([0,T_*];L^6(\mathbb{R}^3)),\\
&\varphi^k\rightarrow \varphi\ \text{in}\ L^\infty([0,T_*];L^\infty(\mathbb{R}^3)),\quad u^k\rightarrow u\ \text{in}\ L^\infty ([0,T_*];D^1 \cap D^{s'} (\mathbb{R}^3)).
\end{split}
\end{equation}

Again by virtue of   the local estimates  (\ref{jkk}) independent of $k$,  there exists a subsequence (still denoted by $(\phi^k,u^k, f^k, \varphi^k,\psi^k)$) converging  to the  limit $(\phi,u,f, \varphi,\psi)$ in the weak or weak* sense.
According to the lower semi-continuity of norms,  the corresponding estimates in (\ref{jkk})  for  $(\phi,u, f,\varphi,\psi)$ still hold except those weighted estimates on $u$.
Thus,  $(\phi,u, f,\varphi)$ is a weak solution in the sense of distributions to the following Cauchy problem:
\begin{equation}\label{liyhn}
\begin{cases}
\displaystyle
\phi_t+u\cdot \nabla \phi+(\gamma-1)\phi\text{div} u=0,\\[6pt]
\displaystyle
\varphi\big(u_t+u\cdot\nabla u+\nabla \phi\big)=-a Lu+f\cdot Q(u),\\[12pt]
\displaystyle
f_t+\sum_{l=1}^3 A_l(u) \partial_lf+B^*(u)f+a\delta\nabla \text{div}u=0,\\[12pt]
\displaystyle
\varphi_t+u\cdot \nabla \varphi-(\delta-1)\varphi \text{div} u=0,\\[12pt]
\displaystyle
(\phi, u, f, \varphi)|_{t=0}=\big(\phi_0, u_0, \phi^{-2e}_0\psi_0,\phi^{-2e}_0\big),\\[12pt]
(\phi, u,  f, \varphi)\rightarrow (\phi^\infty, 0, 0,\big(\phi^\infty\big)^{-2e}),\ \text{as}\   |x|\rightarrow +\infty,\  t\geq 0.
 \end{cases}
\end{equation}

\textbf{Step 1.2:} Strong convergence of $\psi^k$ and the existence to the problem (\ref{eq:cccq-xxx}).
However, the conclusions obtained in Step 1.1  are still insufficient to show  the desired existence of the strong solution to the Cauchy problem (\ref{eq:cccq-xxx}).

For this purpose,  one first needs to check the strong convergence of $\psi^{k}$:
\begin{equation}\label{ghjzx}
\begin{split}
|\psi^{k+1}-\psi^k|_6= \Big|\frac{f^{k+1}\varphi^k-f^{k}\varphi^{k+1}}{\varphi^{k+1}\varphi^k}\Big|_6\leq C \big(|\varphi^k|_\infty |\overline{f}^{k+1}|_6+|f^k|_6|\overline{\varphi}^{k+1}|_\infty\big),
\end{split}
\end{equation}
which, along with (\ref{str}),  implies that
\begin{equation}\label{xiaoqiang}
\psi^k\rightarrow \psi\ \text{in}\ L^\infty([0,T_*];L^6(\mathbb{R}^3)).
\end{equation}

Next, one needs to show the relation $f=\psi \varphi$ still holds for the limit functions. Due to
\begin{equation}\label{aqian1}
\begin{split}
|f^k-\psi \varphi|_{6}
\leq C\big(|\varphi^k-\varphi|_\infty  |\psi^k|_{6}+|\psi^k-\psi|_6 |\varphi|_{\infty})\rightarrow 0,\quad \text{as} \quad k\rightarrow \infty,
\end{split}
\end{equation}
then it holds that
\begin{equation}\label{xiaoqiang1}
f(t,x)=\psi \varphi(t,x),\quad a.e. \quad \text{on} \quad [0,T_*] \times \mathbb{R}^3.
\end{equation}

Next to vertify the relations
\begin{equation}\label{relationrecover}
f=\frac{2a e \delta }{\delta-1}\frac{\nabla \phi}{\phi},\quad \varphi=\phi^{-2e} \quad \text{and} \quad  \psi=\frac{a\delta}{\delta-1}\nabla \phi^{2e} ,
\end{equation}
we denote
$$
f^*=f-\frac{2a e \delta }{\delta-1}\frac{\nabla \phi}{\phi}  \quad \text{and} \quad   \varphi^*=\varphi-\phi^{-2e}.
$$
Then it follows from the equations $(\ref{liyhn})_1$ and $(\ref{liyhn})_3$-$(\ref{liyhn})_4$ that
\begin{equation}\label{liyhn-recover}
\begin{cases}
\displaystyle
f^*_t+\sum_{l=1}^3 A_l(u) \partial_lf^*+B^*(u)f^*=0,\\[12pt]
\displaystyle
\varphi^*_t+u\cdot \nabla \varphi^*-(\delta-1)\varphi^* \text{div} u=0,\\[12pt]
\displaystyle
( f^*, \varphi^*)|_{t=0}=\big(0,0\big),\\[12pt]
( f^*, \varphi^*)\rightarrow (0,0),\ \text{as}\   |x|\rightarrow +\infty,\  t\geq 0,
 \end{cases}
\end{equation}
which, together with a standard energy method, implies that
$$
f^*=0\quad \text{and}\quad  \varphi^*=0 \quad \text{for}\quad (t,x)\in [0,T_*]\times \mathbb{R}^3.
$$
Then the first two relations in (\ref{relationrecover}) have been verified, and the last one follows easily from the relation (\ref{xiaoqiang1}).

Moreover,  denoting  $h=\varphi^{-1}$, one needs to show the following weak convergence:
$$
h^k \nabla^2u^{k} \rightharpoonup  \varphi^{-1} \nabla^2 u \quad \text{weakly* \ in } \ L^\infty([0,T_*]; H^1)\cap   L^2([0,T_*] ; D^1).
$$
Indeed, due to the uniform  positivity for $ \varphi^k\geq \underline{\eta}>0$ and $ \varphi\geq \underline{\eta}$,  (\ref{str}),  and the upper bounds of the norms of $(\phi,u,\varphi, f)$, one has
\begin{equation}\label{min}\begin{split}
&\int_0^{T_*}\int_{\mathbb{R}^3} \Big(h^k\nabla^2 u^{k}-\varphi^{-1}\nabla^2 u\Big)w\text{d}x\text{d}t\\
= &\int_0^{T_*}\int_{\mathbb{R}^3} \Big(\Big(   \frac{ \varphi-\varphi^k}{\varphi^k \varphi }  \Big)\nabla^2 u^{k}+\varphi^{-1}(\nabla^2 u^k-\nabla^2 u)\Big)w\text{d}x\text{d}t\\
\leq & C(\underline{\eta})\Big(\sup_{0\leq t \leq T_{*}}|\varphi^k-\varphi |_\infty +\|\nabla^2 u^k-\nabla^2 u\|_{L^\infty([0,T_*];L^2)}\Big)T_* \rightarrow \ 0,\quad \text{as} \quad k\rightarrow +\infty
\end{split}
\end{equation}
for any test functions  $w(t,x)\in \mathbb{R}^3$ and $w(t,x)\in C^\infty_c ([0,T^*)\times \mathbb{R}^3)$, which implies that
\begin{equation}\label{ming}
\begin{split}
h^k\nabla^2 u^{k} \rightharpoonup \varphi^{-1}\nabla^2  u \quad &\text{weakly* \ in } \   \in L^\infty([0,T_*]; H^1).
\end{split}
\end{equation}
Similarly, one  can also obtain that
\begin{equation}\label{mingaz}
\begin{split}
\sqrt{h^{k}}(\nabla u^{k},  \nabla u^{k}_t) \rightharpoonup  \sqrt{h}(\nabla u,  \nabla u_t) \quad &\text{weakly* \ in } \ \  L^\infty([0,T^*];L^2),\\
h^k\nabla^2 u^{k}\rightharpoonup \varphi^{-1}\nabla^2 u \quad &\text{weakly \ in } \  \   L^2([0,T_*]; D^1\cap D^2),\\
(h^k\nabla^2 u^{k})_t \rightharpoonup (\varphi^{-1}\nabla^2 u)_t \quad &\text{weakly \ in } \ \      L^2([0,T_*] ; L^2).
\end{split}
\end{equation}
Then the corresponding weighted  estimates for $u$ shown in the a priori estimates (\ref{jkk}) still hold for the limit functions.

Based on the  estimates (\ref{jkk}), strong convergences shown in (\ref{str}), (\ref{xiaoqiang}), (\ref{ming})-(\ref{mingaz}) and relations (\ref{xiaoqiang1})-(\ref{relationrecover}), it is obvious that
functions
$$\Big(\phi, u, h=\phi^{2e}, \psi=\frac{a\delta}{\delta-1}\nabla \phi^{2e}\Big)$$
 satisfy (\ref{eq:cccq-xxx}) in the sense of distributions. The a priori estimates (\ref{jkk})  hold for $(\phi,u,h)$, and
\begin{equation*}\begin{split}
& \phi -\phi^\infty\in L^\infty([0,T_*];H^3),\quad  \psi \in L^\infty([0,T_*];D^1\cap D^2),\\
&\frac{2}{3}\eta^{-2e}<\varphi \in L^\infty\cap D^{1,6}\cap D^{2,3}\cap D^3,\ \ f\in L^\infty\cap L^6\cap D^{1,3}\cap D^2,\\
& u\in L^\infty([0,T_*]; H^3)\cap L^2([0,T_*] ; H^4), \ \ \ u_t \in L^\infty([0,T_*]; H^1)\cap L^2([0,T_*] ; D^2),\\
& u_{tt}\in L^2([0,T_*];L^2),\quad   t^{\frac{1}{2}}u\in L^\infty([0,T_*];D^4),\\
&t^{\frac{1}{2}}u_t\in L^\infty([0,T_*];D^2)\cap L^2([0,T_*];D^3) ,\ \  t^{\frac{1}{2}}u_{tt}\in L^\infty([0,T_*];L^2)\cap L^2([0,T_*];D^1).
\end{split}
\end{equation*}

\textbf{Step 2:} Uniqueness.  Let $(\phi_1,  u_1, h_1)$ and $(\phi_2, u_2, h_2)$ be two strong solutions to the   Cauchy problem (\ref{eq:cccq-xxx}) satisfying the uniform estimates in (\ref{jkk}).
Set
\begin{equation*}
\begin{split}
\varphi_i=& (h_i)^{-1},\quad \psi_i=\frac{a\delta}{\delta-1}\nabla h_i,\quad f_i=\frac{a\delta}{\delta-1}  \nabla h_i/h_i=\psi_i \varphi_i \quad \text{for}\quad i=1,2,\\
\overline{\phi}=& \phi_1-\phi_2,\  \    \overline{u}=u_1-u_2, \ \  \overline{f}=f_1-f_2,\ \  \overline{\varphi}=\varphi_1-\varphi_2.
\end{split}
\end{equation*}
Then it follows from (\ref{liyhn}) that
 \begin{equation}
\label{eq:1.2wcvb}
\begin{cases}
  \displaystyle
\quad \overline{\phi}_t+u^1\cdot \nabla\overline{\phi}+\overline{u} \cdot\nabla\phi_2+(\gamma-1)(\overline{\phi}\text{div}u_1 +\phi_2\text{div}\overline{u}^k)=0,\\[8pt]
 \displaystyle
\quad  \varphi_1\overline{u}_t+ \varphi_1u_1\cdot\nabla \overline{u}+\varphi_1\nabla \overline{\phi}+L\overline{u} \\[8pt]
=-\overline{\varphi} ((u_2)_t+u_2 \cdot \nabla u_2)- \varphi_1\overline{u} \cdot \nabla u_2-\overline{\varphi} \nabla \phi_2+f\cdot Q(\overline{u})+\overline{f}\cdot Q(u_2),\\[8pt]
\displaystyle
\quad \overline{f}_t+\sum_{l=1}^3 A_l(u_1) \partial_l\overline{f}+B^*(u_1)\overline{f}+a\delta \nabla \text{div}\overline{u}=\overline{\Upsilon}_1+\overline{\Upsilon}_2,\\[8pt]
  \displaystyle
\quad
\overline{\varphi}_t+u_1\cdot \nabla\overline{\varphi} +\overline{u}\cdot\nabla\varphi_2+(\delta-1)(\overline{\varphi}\text{div}u_1+\varphi_2\text{div}\overline{u})=0,
\end{cases}
\end{equation}
where   $\overline{\Upsilon}_1$  and $\overline{\Upsilon}_2$ are defined as
\begin{equation*}
\overline{\Upsilon}_1=-\sum_{l=1}^3(A_l(u_{1}) \partial_l\psi_{2}-A_l(u_{2}) \partial_l\psi_{2}),\quad \overline{\Upsilon}_2=-(B(u_{1}) \psi_{2}-B(u_{2}) \psi_{2}).
\end{equation*}
Set
$$
\Phi(t)=\|\overline{\phi}(t)\|^2_{1}+\|\overline{\varphi}(t)\|^2_{ 1}+|\overline{f}(t)|^2_2+ |\sqrt{\varphi}_1\overline{u}(t)|^2_{2}+ |\nabla \overline{u}(t)|^2_2.
$$
In a similar way for  the derivation of (\ref{go64qqqqqqzxzx})-(\ref{jiabiaozxlan}), one  can show that
\begin{equation}\label{gonm}\begin{split}
\frac{d}{dt}\Phi(t)+C\big( |\nabla \overline{u}(t)|^2_2+ |\sqrt{\varphi}_1\overline{u}_t|^2_2\big)\leq H(t)\Phi (t),
\end{split}
\end{equation}
where
$$ \int_{0}^{t}H(s)ds\leq C, \quad \text{for} \quad 0\leq t\leq T_*.$$ It follows from the Gronwall's inequality that
$\overline{\phi}=\overline{\varphi}=0$ and $\overline{f}=\overline{u}=0$.
Thus the uniqueness is obtained.

\textbf{Step 3.} The time-continuity  follows easily from  the same  procedure as in Lemma \ref{lem1}.

\end{proof}

\subsection{Taking limit from the non-vacuum flows to the flow with  far field vacuum}
Based on the  local (in time) estimates in (\ref{jkk}),  now we are ready to prove Theorem \ref{th1}.

\begin{proof} We divide the proof into four steps.

 \textbf{Step 1:} The locally uniform positivity of $\phi$.
For any $\eta\in (0,1)$, set
$$\phi^\eta_{ 0}=\phi_0+\eta,\quad \psi^\eta_{ 0}=\frac{a\delta}{\delta-1}\nabla (\phi_0+\eta)^{2e},\quad h^\eta_0=(\phi_0+\eta)^{2e}.$$
Then the initial  compatibility conditions can be given as
\begin{equation}\label{th78zxqdf}
\begin{cases}
\displaystyle
\nabla u_0=(\phi_0+\eta)^{-e}g^\eta_1,\quad  Lu_0=(\phi_0+\eta)^{-2e}g^\eta_2,\\[12pt]
\displaystyle
\nabla \Big(a(\phi_0+\eta)^{2e}Lu_0\Big)=(\phi_0+\eta)^{-e}g^\eta_3,
\end{cases}
\end{equation}
where $g^{\eta}_i$ ($i=1,2,3$) are given as
\begin{equation*}\begin{cases}
\displaystyle
g^\eta_1=\frac{\phi^{-e}_0}{(\phi_0+\eta)^{-e}}g_1,\quad g^\eta_2=\frac{\phi^{-2e}_0}{(\phi_0+\eta)^{-2e}}g_2,\\[8pt]
\displaystyle
g^\eta_3=\frac{\phi^{-3e}_0}{(\phi_0+\eta)^{-3e}}\Big(g_3-\frac{a\eta\nabla \phi^{2e}_0}{\phi_0+\eta}\phi^e_0Lu_0\Big).
\end{cases}
\end{equation*}
Then it follows from the   the initial assumption (\ref{th78qq})-(\ref{th78zxq}) and (\ref{chushi1}) that there exists a $\eta_{1}>0$ such that if $0<\eta<\eta_{1}$, then
\begin{equation}\label{co-verfify}
\begin{split}
1+\eta+\|\phi^\eta_{0}-\eta\|_{3}+|\psi^\eta_{0}|_{D^1\cap D^2}+\|u_0\|_{3}+|g^\eta_1|_2+|g^\eta_2|_2+|g^\eta_3|_2\\
+\|(h^{\eta}_0)^{-1}\|_{L^\infty\cap D^{1,6} \cap D^{2,3} \cap D^3}+\|\nabla h^\eta_0 /h^{\eta}_0\|_{L^\infty \cap L^6\cap D^{1,3}\cap D^2}\leq& \overline{c}_0,
\end{split}
\end{equation}
where $\overline{c}_0$ is a positive constant independent of $\eta$.
Therefore, for initial data $(\phi^\eta_{0}, u^\eta_0,\psi^\eta_0)$, the problem (\ref{eq:cccq-xxx}) admits a unique classical solution $(\phi^\eta,  u^\eta, \psi^\eta)$ in  $[0,T_*]\times \mathbb{R}^3$ satisfying the local estimates in (\ref{dingyi45})-(\ref{jkk}) with $c_0$ replaced by $\overline{c}_0$, and the life span $T_*$ is also independent of $\eta$.

Moreover, the following property holds:
\begin{lemma}\label{yhn}
For any $R_0>0$ and $\eta \in (0,1]$, there exists a constant $a_{R_0}$ such that
\begin{equation}\label{zhengxing}
\phi^\eta(t,x)\geq a_{R_0}>0, \quad \forall \ (t,x)\in [0,T_*] \times B_{R_0},
\end{equation}
where $a_R$ is independent of $\eta$.
\end{lemma}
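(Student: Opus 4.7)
The plan is to integrate the transport equation $(\ref{eq:cccq-xxx})_1$ for $\phi^\eta$ along the particle paths of $u^\eta$ and then exploit the $\eta$-uniform bounds on $u^\eta$ furnished by \eqref{jkk}. I would first define $X^\eta(t;x_0)$ by $\frac{d}{ds}X^\eta(s;x_0)=u^\eta(s,X^\eta(s;x_0))$ with $X^\eta(0;x_0)=x_0$, and integrate $(\ref{eq:cccq-xxx})_1$ along this flow to obtain the explicit representation
\begin{equation*}
\phi^\eta(t,X^\eta(t;x_0))=\phi^\eta_0(x_0)\exp\Big(-(\gamma-1)\int_0^t\text{div}\,u^\eta(s,X^\eta(s;x_0))\,\text{d}s\Big).
\end{equation*}
Thus the pointwise lower bound on $\phi^\eta$ reduces to (i) an $\eta$-uniform lower bound for the exponential factor, and (ii) an $\eta$-uniform lower bound for $\phi^\eta_0$ on a slightly enlarged ball.

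For step (i), the uniform estimate $\|u^\eta\|_{L^\infty([0,T_*];H^3)}\le C$ extracted from \eqref{jkk} combined with the Sobolev embedding $H^3(\mathbb{R}^3)\hookrightarrow C^1_b(\mathbb{R}^3)$ yields
\begin{equation*}
\|u^\eta\|_{L^\infty([0,T_*]\times\mathbb{R}^3)}+\|\text{div}\,u^\eta\|_{L^\infty([0,T_*]\times\mathbb{R}^3)}\le C
\end{equation*}
with $C$ independent of $\eta$. This simultaneously bounds the exponential away from zero and gives $|X^\eta(t;x_0)-x_0|\le CT_*$, so the flow $x_0\mapsto X^\eta(t;x_0)$ is a homeomorphism of $\mathbb{R}^3$ whose inverse carries $B_{R_0}$ into $B_{R_0+CT_*}$.

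For step (ii), since $\phi_0\in H^3(\mathbb{R}^3)\hookrightarrow C(\mathbb{R}^3)$ is continuous and $\phi_0>0$ pointwise by the hypothesis of Theorem \ref{th1}, compactness of $\overline{B_{R_0+CT_*}}$ produces the $\eta$-independent positive quantity $m_{R_0}:=\min_{\overline{B_{R_0+CT_*}}}\phi_0>0$. Using $\phi^\eta_0=\phi_0+\eta\ge\phi_0$ and combining with step (i), any $x\in B_{R_0}$ can be written $x=X^\eta(t;x_0)$ with $x_0\in B_{R_0+CT_*}$, and I would conclude
\begin{equation*}
\phi^\eta(t,x)\ge e^{-(\gamma-1)CT_*}\phi_0(x_0)\ge e^{-(\gamma-1)CT_*}m_{R_0}=:a_{R_0}>0.
\end{equation*}

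There is no genuinely hard step here provided the $\eta$-uniform estimates of \eqref{jkk} are in hand; the argument is essentially a standard characteristics computation. The only point that warrants attention is verifying that the $L^\infty_{t,x}$ bounds on $u^\eta$ and $\nabla u^\eta$ coming out of \eqref{jkk} are genuinely $\eta$-independent, which is precisely the output of the approximation scheme constructed in Subsections 3.2--3.5.
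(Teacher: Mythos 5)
Your proposal is correct and follows essentially the same route as the paper: integrate the transport equation along particle paths of $u^\eta$, bound the exponential factor using the $\eta$-uniform estimates in \eqref{jkk}, control the displacement of the flow map to reduce to a slightly enlarged ball, and then use the strict positivity and continuity of $\phi_0$ (plus $\phi_0^\eta=\phi_0+\eta\ge\phi_0$) on that compact set. The only cosmetic difference is that you bound $\int_0^t|\mathrm{div}\,u^\eta|\,\mathrm{d}s$ via the $L^\infty_tH^3$ estimate and Sobolev embedding, whereas the paper applies Cauchy--Schwarz in time to the $L^2_tH^3$-type bound; both are immediate consequences of \eqref{jkk}.
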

\begin{proof} It suffices to consider the case when  $R_0$ is sufficiently large.

First, due to the  initial assumptions on $\phi$ and $\psi$:
$$\phi_0 \in  H^3,\  \psi_0= \frac{\delta}{\delta-1}\Big(\frac{A\gamma}{\gamma-1}\Big)^{-2e}\nabla \phi^{2e}_0 \in D^1\cap D^2,$$
one has    $\nabla \phi^{2e}_0\in L^\infty$. Therefore,  the initial  vacuum does not  occur in the interior point but  in the far field, and for every $R'>2$, there exists a constant $C_{R'}$ independent of $\eta$ such that
\begin{equation}\label{hao1}
\phi^\eta_0(x)  \geq C_{R'}+\eta>0, \quad \forall \ x\in  B_{R'}.
\end{equation}

Second, let $x(t;x_0)$ be the particle path starting from $x_0$ at $t=0$, i.e.,
\begin{equation}\label{gobn}
\frac{d}{\text{d}t}x(t;x_0)=u(t,x(t;x_0)),\quad x(0;x_0)=x_0.
\end{equation}
Then  denote by $B(t,R')$ the closed regions that are the images of $B_{R'}$  under the flow map (\ref{gobn}):
\begin{equation*}
\begin{split}
&B(t,R')=\{x(t;x_0)|x_0\in B_{R'}\}.
\end{split}
\end{equation*}
It follows from $ (\ref{eq:cccq})_1$ that
\begin{equation}\label{zhengleme}
\phi^\eta(t,x)=\phi^\eta_0(x_0)\exp\Big(-\int_{0}^{t}(\gamma-1)\textrm{div} u^\eta(s;x(s;x_0))\text{d}s\Big).
\end{equation}
According to (\ref{dingyi45})-(\ref{jkk}), it holds that  for $ 0\leq t \leq T_*$,
\begin{equation}\label{zhengle}
\begin{split}
&\int_0^t|\textrm{div} u^\eta(t,x(t;x_0)|\text{d}s\leq \int_0^t|\nabla u^\eta|_\infty\text{d}s\\
\leq &  \int_0^t \|\nabla u^\eta\|_2\text{d}s\leq t^{\frac{1}{2}}\Big(\int_0^t \|\nabla u^\eta\|^2_2\text{d}s\Big)^{\frac{1}{2}}\leq c_3T^{\frac{1}{2}}_*.
\end{split}
\end{equation}
Thus,  by (\ref{hao1}) and (\ref{zhengle}),   one can obtain   that for $ 0\leq t \leq T_*$,
\begin{equation}\label{hao2}
 \phi^\eta(t,x)\geq C^*(C_{R'}+\eta)>0, \quad \forall \ x\in  B(t,R'),
\end{equation}
where $C^*=\exp\big(-(\gamma-1)c_3T^{\frac{1}{2}}_*\big)$.

At last, it follows from (\ref{gobn}) and (\ref{dingyi45})-(\ref{jkk}) that
\begin{equation*}
\begin{split}
&|x_0-x|=|x_0-x(t;x_0)|
\leq  \int_0^t| u^\eta(\tau,x(\tau;x_0))|\text{d}\tau\leq c_3t\leq 1\leq  R'/2,
\end{split}
\end{equation*}
for all $(t,x)\in [0,T_*]\times B_R$,  which  implies $
B_{R'/2} \subset B(t,R')
$.
Thus, one can  choose
$$R'=2R_0,\quad \text{and} \quad a_{R_0}=C^*C_{R'}.$$
\end{proof}

 \textbf{Step 2:} Existence.
 First,  since the estimates  (\ref{jkk}) are independent of $\eta$, then there exists a subsequence (still denoted by $(\phi^\eta, u^\eta, \psi^\eta)$) converging  to a limit $(\phi,u,\psi)$ in weak or weak* sense:
\begin{equation}\label{ruojixianas}
\begin{split}
(\phi^\eta-\eta,u^\eta)\rightharpoonup  (\phi,u) \quad &\text{weakly* \ in } \ L^\infty([0,T_*];H^3),\\
u^\eta \rightharpoonup  u \quad &\text{weakly \ in } \ L^2([0,T_*];H^4),\\
\varphi^\eta \rightarrow \varphi \quad &\text{weakly* \ in } \ L^\infty([0,T_*];L^\infty\cap D^{1,6} \cap D^{2,3} \cap D^3),\\
f^\eta \rightarrow f \quad &\text{weakly* \ in } \ L^\infty([0,T_*];L^\infty\cap L^6\cap  D^{1,3} \cap D^2),\\
\psi^\eta  \rightharpoonup  \psi \quad &\text{weakly* \ in } \ L^\infty([0,T_*];D^1\cap D^2),\\
\phi^\eta_t\rightharpoonup \phi_t \quad &\text{weakly* \ in } \ L^\infty([0,T_*];H^2),\\
 (u^\eta_t, \psi^\eta_t)\rightharpoonup  (u_t, \psi_t) \quad &\text{weakly* \ in } \ L^\infty([0,T_*];H^1),\\
\varphi^\eta_t \rightharpoonup \varphi_t \quad &\text{weakly* \ in } \ L^\infty([0,T_*];L^6\cap D^{1,3}\cap D^2),\\
 f^\eta_t\rightharpoonup  f_t \quad &\text{weakly* \ in } \ L^\infty([0,T_*];L^3\cap D^{1}).
\end{split}
\end{equation}
Then the lower semi-continuity of weak convergences implies that $(\phi, u,f, \varphi, \psi)$  satisfies the corresponding  estimates (\ref{jkk}) except those weighted estimates on $u$.

Second,  for any $R> 0$, due to the Aubin-Lions Lemma  (see \cite{jm}) (i.e., Lemma \ref{aubin}), there exists a subsequence (still denoted by $(\phi^\eta, u^\eta, \psi^\eta)$) satisfying
\begin{equation}\label{ert}\begin{split}
(\phi^\eta-\eta,\psi^\eta,\varphi^\eta)\rightarrow& (\phi,\psi,\varphi) \quad \text{in } \ C([0,T_*];H^1(B_R)),\\
  u^\eta\rightarrow& u \quad \quad  \quad  \ \ \  \text{in } \  C([0,T_*];H^2(B_R)),
\end{split}
\end{equation}
where $B_R$ is a ball centered at origin with radius $R$.

Here,  the following relations  hold for the limit functions:
\begin{equation}\label{relationrecover-1}
f=\frac{2a e \delta }{\delta-1}\frac{\nabla \phi}{\phi},\quad \varphi=\phi^{-2e} \quad \text{and} \quad  \psi=\frac{a\delta}{\delta-1}\nabla \phi^{2e},
\end{equation}
which can be proved by  the  same argument used in the proof of (\ref{relationrecover}).

Let $h=\varphi^{-1}$. According to estimates (\ref{jkk}) except those weighted terms of $u$,   Lemma \ref{yhn}, the weak or weak* convergences shown in (\ref{ruojixianas}) and  the strong convergences shown in (\ref{ert}),  in a similar way for proving (\ref{ming}), one can obtain that
\begin{equation}\label{mingsx}
\begin{split}
\sqrt{h^{\eta}}(\nabla u^{\eta},  \nabla u^{\eta}_t) \rightharpoonup  \sqrt{h}(\nabla u,  \nabla u_t) \quad &\text{weakly* \ in } \ \  L^\infty([0,T^*];L^2),\\
h^\eta \nabla^2 u^{\eta} \rightharpoonup h \nabla^2 u \quad &\text{weakly* \ in } \    L^\infty([0,T_*]; H^1),\\
h^\eta\nabla^2u^{\eta}\rightharpoonup h\nabla^2 u \quad &\text{weakly \ \ in } \     L^2([0,T_*]; D^1\cap D^2),\\
(h^\eta\nabla^2 u^{\eta})_t \rightharpoonup (h\nabla^2 u)_t \quad &\text{weakly \ \ in } \     L^2([0,T_*]; L^2).
\end{split}
\end{equation}
Then the corresponding weighted estimates for $u$  shown in the a priori estimates (\ref{jkk})  hold also for the limit functions.

Thus it is easy to show that $(\phi, u, \psi) $ solves the Cauchy problem (\ref{eq:cccq})-(\ref{sfanb1}) in the sense of distributions.
Moreover, in this step, even though vacuum appears in  the far field, $\psi$ satisfies $\partial_i \psi^{(j)}=\partial_j \psi^{(i)}$  $(i,j=1,2,3)$ and  solves the following positive symmetric hyperbolic system in the sense of distributions:
\begin{equation}\label{zhenzheng}
\psi_t+\sum_{l=1}^3 A_l \partial_l\psi+B\psi+\delta a\phi^{2e}\nabla \text{div} u=0.
\end{equation}

\textbf{Step 3.}  The uniqueness follows easily from  the same  procedure as that for     Theorem \ref{th1zx}.

\textbf{Step 4:} Time continuity. The time continuity of $\phi$ and $\psi$ can be obtained via the similar arguments as used in Lemma \ref{lem1}.

For the velocity $u$,  the a priori estimates obtained above and Sobolev's imbedding theorem imply that
\begin{equation}\label{zheng1}
\begin{split}
 u\in C([0,T_*]; H^2)\cap  C([0,T_*]; \text{weak-}H^3) \quad \text{and} \quad   \phi^{e}\nabla u\in  C([0,T_*]; L^2).
 \end{split}
\end{equation}
Then   equations $(\ref{eq:cccq})_3$ yield
$$
\varphi u_t \in L^2([0,T_*];H^2),\quad (\varphi u_t)_t \in L^2([0,T_*];L^2),
$$
which implies that
$
\varphi u_t \in C([0,T_*];H^1)
$.
Together with classical elliptic estimates and
$$
aLu=-\varphi \big(u_t+u\cdot\nabla u +\nabla \phi-\psi \cdot Q(u)\big),
$$
one gets   $
 u\in C([0,T_*]; H^{3})$ immediately.

Next for  $h\nabla^2 u$, due to
$$
h\nabla^2 u \in L^\infty([0,T_*]; H^1)\cap L^2([0,T_*] ; D^2) \quad \text{and} \quad   (h\nabla^2 u)_t \in  L^2([0,T_*] ; L^2),
$$
and the classical Sobolev imbedding theorem, one gets  quickly that
$$
h\nabla^2 u\in C([0,T_*]; H^1).
$$
Then the  time continuity of $u_t$ follows easily.
\end{proof}

\subsection{The proof for Theorem \ref{th2}}
With Theorem \ref{th1} at hand, now we are  ready to establish the local-in-time well-posedness  of the regular solution to the original Cauchy problem (\ref{eq:1.1})-(\ref{far}) shown in Theorem \ref{th2}.

\begin{proof}  The proof is divided into two steps.

\textbf{Step 1.}
It follows from the initial assumptions (\ref{th78})-(\ref{th78zx}) and Theorem \ref{th1} that there exists  a time $T_{*}> 0$ such that the problem (\ref{eq:cccq})-(\ref{sfanb1}) has a unique regular solution $(\phi,u,\psi)$ satisfying the regularity (\ref{zhengzeA}), which implies that
\begin{equation}\label{reg2}
\begin{split}
\phi\in C^1([0,T_{*}]\times \mathbb{R}^3), \quad (u, \nabla u) \in   C((0,T_{*}]\times \mathbb{R}^3).
\end{split}
\end{equation}
Set $\rho=\Big(\frac{\gamma-1}{A\gamma}\phi\Big)^{\frac{1}{\gamma-1}}$ with $\rho(0,x)=\rho_0$. It follows from the proof of Theorem \ref{th1} (in particular, the proofs of (\ref{relationrecover}) and (\ref{relationrecover-1}) ) that the following relations hold:
\begin{equation}\label{relationrecover-2}
f=\frac{2a e \delta }{\delta-1}\frac{\nabla \phi}{\phi},\quad \varphi=\phi^{-2e} \quad \text{and} \quad  \psi=\frac{a\delta}{\delta-1}\nabla \phi^{2e},
\end{equation}
which implies that
$$
f=a \delta \nabla \log \rho,\quad \varphi=a\rho^{1-\delta} \quad \text{and} \quad  \psi=\frac{\delta}{\delta-1}\nabla \rho^{\delta-1}.
$$

Due to the above regularities and relations of the solution $(\phi,u,\psi)$,  then multiplying $(\ref{eq:cccq})_1$ by
$$
\frac{\partial \rho}{\partial \phi}(t,x)=\frac{1}{\gamma-1}\Big(\frac{\gamma-1}{A\gamma}\Big)^{\frac{1}{\gamma-1}}\phi^{\frac{2-\gamma}{\gamma-1}}(t,x),
$$
yields the continuity equation in (\ref{eq:1.1}), while multiplying $(\ref{eq:cccq})_2$ by $
\rho(t,x)
$ gives the momentum equations in (\ref{eq:1.1}).

Thus we have shown that $(\rho,u)$ satisfies problem (\ref{eq:1.1})-(\ref{far})  in the sense of distributions and has the regularities shown in Definition \ref{d1} and (\ref{reg11}). Finally,
$
\rho(t,x)>0$ for  $(t,x)\in [0,T_{*}]\times \mathbb{R}^3$ follows from the continuity equation.

In summary, the Cauchy problem (\ref{eq:1.1})-(\ref{far}) has a unique regular solution $(\rho,u)$.

\textbf{Step 3.} Now we  show   that, if $\gamma \in (1,2]$,  the regular solution  obtained in the above step is indeed a classical one  within its life span.

First, due to  $1<\gamma \leq 2$, one has
$$(\rho, \nabla \rho,  \rho_t,  u,\nabla u)\in C( [0,T_*]\times \mathbb{R}^3).$$

Second, it follows from the classical Sobolev embedding  result:
\begin{equation}\label{qian}\begin{split}
&L^2([0,T];H^1)\cap W^{1,2}([0,T];H^{-1})\hookrightarrow C([0,T];L^2),
\end{split}
\end{equation}
and the regularity (\ref{reg11}) that
\begin{equation}\label{qiaf}
 tu_t\in C([0,T_*]; H^2), \quad \text{and} \quad u_t\in C([\tau,T]\times \mathbb{R}^3).
\end{equation}

Finally, it remains to  show that
 $ \nabla^2 u \in C([\tau,T_*]\times \mathbb{R}^3)$. According to Step 1 above, the following elliptic system holds
\begin{equation}\label{ekki}
\begin{split}
aLu=-\phi^{-2e}\big(u_t+u\cdot\nabla u +\nabla \phi-\psi \cdot Q(u))=\phi^{-2e} \mathbb{M}.
\end{split}
\end{equation}
The regularity (\ref{reg11}) implies that
\begin{equation}\label{er1}
\begin{split}
t \phi^{-2e} \mathbb{M} \in L^\infty([0,T_*]; H^2).
\end{split}
\end{equation}
Note that
\begin{equation} \label{chejj}
\begin{split}
(t\phi^{-2e} \mathbb{M})_t=&\phi^{-2e} \mathbb{M}+t\phi^{-2e}_t \mathbb{M}  +t\phi^{-2e}\mathbb{M}_t \in L^2([0,T_*]; L^2).
\end{split}
\end{equation}
So it follows from the   classical  Sobolev imbedding theorem:
 \begin{equation}\label{yhnh}
\begin{split}
L^\infty([0,T];H^1)\cap W^{1,2}([0,T];H^{-1})\hookrightarrow C([0,T];L^q),
\end{split}
\end{equation}
for any $q\in [2,6)$, and (\ref{ekki})-(\ref{chejj}) that
$$
t\phi^{-2e} \mathbb{M} \in C([0,T_*]; W^{1,4}), \quad
t\nabla^2 u \in C([0,T_*]; W^{1,4}).
$$
Again  the  Sobolev embedding theorem implies that
$
\nabla^2 u \in C((0,T_*]\times \mathbb{R}^3)
$.

 \end{proof}

Theorem \ref{th2-1} can be proved by the similar argument used in Theorem \ref{th2} under some slight modifications, so the details are omitted.

\section{Non-existence of global solutions with $L^\infty$ decay on $u$}
\subsection{Proof of Theorem \ref{th:2.20}}
Now we are ready to prove Theorem \ref{th:2.20}.  Let $T>0$ be any constant, and    $(\rho,u)\in D(T)$. It  follows from the definitions of  $m(t)$, $\mathbb{P}(t)$ and $E_k(t)$ that
$$
 |\mathbb{P}(t)|\leq \int \rho(t,x)|u|(t,x)\leq  \sqrt{2m(t)E_k(t)},
$$
which, together with the definition of the solution class $D(T)$, implies that
$$
0<\frac{|\mathbb{P}(0)|^2}{2m(0)}\leq E_k(t)\leq \frac{1}{2} m(0)|u(t)|^2_\infty \quad \text{for} \quad t\in [0,T].
$$
Then one obtains that there exists a positive constant $C_u=\frac{|\mathbb{P}(0)|}{m(0)}$ such that
$$
|u(t)|_\infty\geq C_u  \quad \text{for} \quad t\in [0,T].
$$
Thus one obtains     the desired conclusion as shown in Theorem \ref{th:2.20}.

\subsection{Proof of Corollary \ref{th:2.20-c}}

Let $ (\rho,u)(t,x)$ in $[0,T]\times \mathbb{R}^3$ be the regular solution defined in Definition \ref{d1}. Next we just need to show that $(\rho,u)\in D(T)$.

First, it is easy to show that $(\rho,u)$ has finite total mass $m(t)$, finite momentum $\mathbb{P}(t)$, finite total energy $E(t)$.
\begin{lemma}
\label{lemmak-1} Let (\ref{canshu}) and (\ref{decaycanshu}) hold, and  $(\rho,u)$ be the regular solution defined in Definition \ref{d1}, then
$$  m(t)+ |\mathbb{P}(t)|+ E(t) <+\infty \quad \text{for} \quad t\in [0,T]. $$
\end{lemma}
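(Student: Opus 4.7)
The plan is to leverage the regularity $\rho^{\gamma-1}\in C([0,T];H^3)$ and $u\in C([0,T];H^3)$ built into Definition \ref{d1}, together with the restriction $1<\gamma\leq 3/2$ from \eqref{decaycanshu}, to verify each of the three finiteness statements independently, without invoking any conservation law. The hardest of the three is the finiteness of $m(t)$; once that is in hand, the bounds on $|\mathbb{P}(t)|$ and $E(t)$ will follow essentially for free from pointwise bounds on $u$ and $\rho^{\gamma-1}$.

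First I would control the total mass by writing $\rho=(\rho^{\gamma-1})^{1/(\gamma-1)}$. The Sobolev embedding $H^3(\mathbb{R}^3)\hookrightarrow L^2\cap L^\infty$ guarantees $\rho^{\gamma-1}(t)\in L^p$ for every $p\in[2,\infty]$ by interpolation, uniformly in $t\in[0,T]$. Since $\gamma\leq 3/2$, the exponent $1/(\gamma-1)\geq 2$ falls inside this range, so
\begin{equation*}
m(t)=\int \rho\,\text{d}x=\int (\rho^{\gamma-1})^{1/(\gamma-1)}\,\text{d}x=\|\rho^{\gamma-1}(t)\|_{L^{1/(\gamma-1)}}^{1/(\gamma-1)}<\infty,
\end{equation*}
with a bound uniform on $[0,T]$ by the continuity of $t\mapsto \rho^{\gamma-1}(t)$ in $H^3$.

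With $m(t)$ controlled, the remaining two quantities follow from crude $L^\infty$ estimates. The momentum obeys $|\mathbb{P}(t)|\leq |u(t)|_\infty\, m(t)<\infty$ thanks to $u(t)\in H^3\hookrightarrow L^\infty$. For the total energy, the kinetic part is bounded by $E_k(t)\leq \tfrac{1}{2}|u(t)|_\infty^2\, m(t)$, while the internal part is handled by $\int \rho^\gamma\,\text{d}x=\int \rho\cdot\rho^{\gamma-1}\,\text{d}x\leq |\rho^{\gamma-1}(t)|_\infty\, m(t)$; hence $E(t)<\infty$ on $[0,T]$.

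The one delicate point is the mass bound: the hypothesis $\gamma\leq 3/2$ is used essentially, since it is precisely what forces $1/(\gamma-1)\geq 2$ so that the available integrability of $\rho^{\gamma-1}$ reaches the exponent needed to recover $\rho\in L^1(\mathbb{R}^3)$; for $\gamma>3/2$ the method breaks down. The remaining restriction $\delta\geq\gamma-1$ in \eqref{decaycanshu} plays no role in this finiteness statement, and will be exploited only afterwards when combining Theorem \ref{th:2.20} with the conservation of mass and momentum to conclude the non-existence result of Corollary \ref{th:2.20-c}.
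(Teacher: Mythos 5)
Your proof is correct and takes essentially the same route as the paper: bound $m(t)$ by writing $\rho=(\rho^{\gamma-1})^{1/(\gamma-1)}$ and using $\rho^{\gamma-1}\in H^3\hookrightarrow L^2\cap L^\infty$ together with $1/(\gamma-1)\geq 2$ from $\gamma\leq 3/2$, then deduce finiteness of $|\mathbb{P}(t)|$ and $E(t)$ from $m(t)<\infty$ and the $L^\infty$ bounds on $u$ and $\rho^{\gamma-1}$. The only cosmetic difference is in the kinetic-energy estimate (you use $|u|_\infty^2 m(t)$ while the paper uses $|\rho|_\infty|u|_2^2$), but both are immediate from the same regularity.
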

\begin{proof}
Indeed, due to $1<\gamma \leq \frac{3}{2}$,  one has
\begin{equation}\label{finite}\begin{split}
m(t)=\int  \big(\rho^{\gamma-1}\big)^{\frac{1}{\gamma-1}}  \leq C|\rho^{\gamma-1} |^2_2<&+\infty,\\
\displaystyle
\mathbb{P}(t)=\int  \rho u \leq C|\rho|_1|u|_\infty<&+\infty,\\
\displaystyle
E(t)=\int  \Big(\frac{1}{2}\rho|u|^2+\frac{P}{\gamma-1}\Big)  \leq C(|\rho|_\infty|u|^2_2+|\rho|^{\gamma-1}_\infty|\rho|_1)<&+\infty.
\end{split}
\end{equation}
\end{proof}

Second,  the conservation of the total mass and momentum can be verified.
\begin{lemma}
\label{lemmak} Let (\ref{canshu}) and (\ref{decaycanshu}) hold, and  $(\rho,u)$ be the regular solution defined in Definition \ref{d1}. Then
$$\mathbb{P}(t)=\mathbb{P}(0)\quad \text{and} \quad  m(t)= m(0) \quad \text{for} \quad t\in [0,T]. $$
\end{lemma}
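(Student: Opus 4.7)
The strategy is to integrate the continuity equation $(\ref{eq:1.1})_1$ and the momentum equation $(\ref{eq:1.1})_2$ over $\mathbb{R}^3$ against a smooth radial cutoff $\varphi_R$ satisfying $\varphi_R\equiv 1$ on $B_R$, $\mathrm{supp}\,\varphi_R\subset B_{2R}$, and $|\nabla\varphi_R|_\infty\leq C/R$, and then send $R\to\infty$. The whole argument reduces to a collection of global $L^1$ bounds on $\rho$, $\rho u$, $P$, $\rho u\otimes u$, and the viscous stress $\mathbb{T}$, for which the hypotheses (\ref{canshu}) and (\ref{decaycanshu}) are tailor-made.

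First I would assemble the integrability ingredients. From $\rho^{\gamma-1}\in C([0,T];H^3)$ and $1<\gamma\leq 3/2$, the exponent $1/(\gamma-1)\geq 2$, so $\rho=(\rho^{\gamma-1})^{1/(\gamma-1)}$ belongs to $C([0,T]; L^1\cap L^\infty)$. By Cauchy--Schwarz and finite kinetic energy already recorded in Lemma \ref{lemmak-1}, $|\rho u|_1\leq\bigl(2m(t) E_k(t)\bigr)^{1/2}<\infty$, and $|\rho u\otimes u|_1\leq 2E_k(t)<\infty$. The pressure $P=A\rho\cdot\rho^{\gamma-1}$ lies in $L^1$ since $\rho\in L^1$ and $\rho^{\gamma-1}\in L^\infty$. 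Finally, writing $\mu(\rho)\nabla u=\alpha\,\rho^{(\delta+1)/2}\cdot\rho^{(\delta-1)/2}\nabla u$ and invoking $\rho^{(\delta-1)/2}\nabla u\in C([0,T];L^2)$ from Definition \ref{d1} together with $\rho^{\delta+1}=\rho^\delta\cdot\rho\in L^1$ (valid because $\rho\in L^1\cap L^\infty$ and $\delta<1$) yields $\mu(\rho)\nabla u\in L^1$; the $\lambda(\rho)\,\mathrm{div}\,u\,\mathbb{I}_3$ piece is handled identically. Hence $\mathbb{T}\in L^1$.

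Mass conservation is then immediate: testing $(\ref{eq:1.1})_1$ against $\varphi_R$ gives
\begin{equation*}
\frac{d}{dt}\int \rho\,\varphi_R\,\mathrm{d}x=\int \rho u\cdot\nabla\varphi_R\,\mathrm{d}x,
\end{equation*}
and the right-hand side is bounded by $(C/R)\int_{B_{2R}\setminus B_R}|\rho u|\,\mathrm{d}x\to 0$ as $R\to\infty$ by dominated convergence, so $m(t)$ is differentiable in time with zero derivative. For momentum, testing $(\ref{eq:1.1})_2$ against $\varphi_R\,\mathbf{e}_j$ and integrating by parts produces
\begin{equation*}
\frac{d}{dt}\int \rho u^{(j)}\varphi_R\,\mathrm{d}x=\int \bigl(\rho u^{(j)} u+P\mathbf{e}_j-\mathbb{T}\mathbf{e}_j\bigr)\cdot\nabla\varphi_R\,\mathrm{d}x,
\end{equation*}
and each term on the right is of order $O(R^{-1})$ by the $L^1$ bounds above. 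Passing to the limit and integrating in time then gives $\mathbb{P}(t)=\mathbb{P}(0)$.

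The only delicate point is the $L^1$ control of the viscous stress near the far-field vacuum, since $\mu(\rho)=\alpha\rho^\delta$ degenerates exactly where $\rho\to 0$. This is where the weighted regularity $\rho^{(\delta-1)/2}\nabla u\in L^2$ built into Definition \ref{d1} becomes indispensable, and pairs precisely with the bound $\rho^{\delta+1}\in L^1$ provided by (\ref{decaycanshu}); the ranges $1<\gamma\leq 3/2$ and $\delta\in[\gamma-1,1)$ in (\ref{decaycanshu}) are what secure simultaneously $\rho\in L^1$ and the integrability of the degenerate viscous term, making global mass and momentum conservation meaningful in the regular-solution class.
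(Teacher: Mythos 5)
Your proof is correct and follows essentially the same strategy as the paper: integrate the conservation laws over space and show the flux terms vanish at infinity thanks to global integrability. The paper merely asserts $\rho u^{(i)}u^{(j)},\ \rho^\gamma,\ \rho^\delta\nabla u\in W^{1,1}(\mathbb{R}^3)$ and concludes in one line, whereas you make the argument explicit via a cutoff $\varphi_R$ and verify the underlying $L^1$ bounds, including the key decomposition $\rho^\delta\nabla u=\rho^{(\delta+1)/2}\cdot\bigl(\rho^{(\delta-1)/2}\nabla u\bigr)$ pairing $\rho^{\delta+1}\in L^1$ with the weighted regularity from Definition~\ref{d1} — precisely the point the paper highlights as the obstruction in the constant-viscosity ($\delta=0$) case.
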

\begin{proof}
The momentum equations $(\ref{eq:1.1})_2$ imply   that
\begin{equation}\label{deng1}
\mathbb{P}_t=-\int \text{div}(\rho u \otimes u)-\int \nabla P+\int \text{div}\mathbb{T}=0,
\end{equation}
where one has used the fact that
$$
\rho u^{(i)}u^{(j)},\quad \rho^\gamma \quad \text{and} \quad \rho^\delta \nabla u \in W^{1,1}(\mathbb{R}^3)\quad \text{for} \quad i,\ j=1,\ 2,\ 3.
$$
Similarly, we  can show  the conservation of the mass.
\end{proof}


According to Theorem \ref{th:2.20} and Lemmas \ref{lemmak-1}-\ref{lemmak}, one  obtains the desired conclusion in Corollary \ref{th:2.20-c}.

\begin{remark}
First, under the assumptions of Corollary \ref{th:2.20-c}, the regular solution $(\rho,u)$  satisfies also the energy equality. Indeed, the continuity equation $(\ref{eq:1.1})_1$ and the definition of $P$ give
\begin{equation}\label{pressure}
P_t+u\cdot \nabla P+\gamma P\text{div}u=0,
\end{equation}
While the momentum equations $(\ref{eq:1.1})_2$ and (\ref{pressure}) yield that
\begin{equation}\label{conservationofenergy}
\begin{split}
&E_t+\int \rho^\delta\Big(\alpha  |\nabla u|^2+(\alpha+\beta)|\text{div}u|^2\big)\\
=& -\frac{1}{2}\int \text{div}(\rho u |u|^2)-\frac{\gamma}{\gamma-1}\int \text{div}(uP)  +\int \text{div}(u\mathbb{T}).
\end{split}
\end{equation}
Due to the definition of the regular solution,  (\ref{canshu}) and (\ref{decaycanshu}),  one gets that
$$
\rho u |u|^2, \quad uP \quad \text{and} \quad u\mathbb{T} \in W^{1,1}(\mathbb{R}^3),
$$
which, along with (\ref{conservationofenergy}) implies the desired energy equality:
\begin{equation}\label{decayenergy}
E(t)+\int_0^t\int \rho^\delta\Big(\alpha  |\nabla u|^2+(\alpha+\beta)|\text{div}u|^2\big)\text{d}x\text{d}s=E(0).
\end{equation}

Second, for the flows of constant viscosities \cite{CK3, HX1} ($\delta=0$), it is not  clear to verify the conservation of momentum for the strong solutions with vacuum for the Cauchy problem. The  reason is that one is not sure whether $\mathbb{T}\in W^{1,1}(\mathbb{R}^3)$, or not.

\end{remark}

\subsection{Proof of Corollary \ref{th:2.20-HLX}}

For the convenience  of the proof, set:
$$
F\triangleq (2\alpha+\beta)\text{div} u-P(\rho),\quad \omega\triangleq\nabla \times u,
$$
where $F$ is the effective viscous flux, and $\omega$ is the vorticity.

\begin{proof} The proof is divided into three steps: 

 \textbf{Step 1:} Local-in-time well-posedness.  It follows easily from the initial assumption (\ref{initialhxl}) and    arguments in \cite{CK3,guahu} that there exists a time $T_0>0$ and a unique classical solution $(\rho,u)$ in $(0,T_0)\times \mathbb{R}^3$  to the Cauchy problem  (\ref{eq:1.1})-(\ref{far})  satisfying
\begin{align}
&   (\rho,P(\rho)) \in C([0,T_0];H^3), \quad \rho^{\frac{1}{2}} \in C([0,T_0];H^1), \label{rrt}\\
& u\in C([0,T_0]; D^1\cap D^3)\cap L^2([0,T_0] ; D^4), \quad  u_t\in L^\infty([0,T_0]; D^1)\cap L^2([0,T_0] ; D^2).\label{rry}
\end{align}

  \textbf{Step 2:} Global-in-time well-posedness.  It follows from the local-in-time well-posedenss obtained in Step 1,  the smallness assumption (\ref{smallness}) and the argument  in \cite{HX1} that   the Cauchy problem  (\ref{eq:1.1})-(\ref{far})
  has a unique  global classical solution $(\rho,u)$ in $(0,\infty)\times \mathbb{R}^3$ satisfying  (\ref{lagrangian1q}) and (\ref{lagrangian3q})-(\ref{largetimehlx}) for any $0<\tau <T<\infty$. It remains to show (\ref{lagrangian2q}).

  First, the continuity equation $(\ref{eq:1.1})_1$ implies that $\rho^{\frac{1}{2}}$ satisfies the following equation
      \begin{align}
\rho^{\frac{1}{2}}_t+u\cdot \nabla\rho^{\frac{1}{2}} +\frac{1}{2} \rho^{\frac{1}{2}} \text{div}u=0\label{equation111},
\end{align}
which, along with the standard energy estimates argument for transport equations and (\ref{lagrangian4q}), yields that
\begin{equation}\label{equation112}\begin{split}
\|\rho^{\frac{1}{2}}(t)\|_{1}\leq& \|\rho^{\frac{1}{2}}_0\|_{1}\exp\Big(C\int_0^t \| u\|_{D^1\cap D^3}\text{d}s\Big)<\infty \quad \text{for} \quad 0<t\leq T,
\end{split}
\end{equation}
where $C>0$ is constant, and $T$ is any positive time. This, together with (\ref{rrt}), shows
\begin{equation}\label{rrq}
\rho^{\frac{1}{2}} \in C([0,T];H^1).\end{equation}

Then (\ref{rrq}) yields the conservation of the mass, since
$$
\frac{d}{dt}\int \rho=-\int \text{div} (\rho u)=0, \quad \text{for} \quad 0<t\leq T,$$
due to $\rho u \in W^{1,1}(\mathbb{R}^3)$.

 \textbf{Step 3:} Verification of the large time behavior on $u$.
First, by the Gagliardo-Nirenberg  inequality  in Lemma \ref{lem2as},  and Lemma 2.3 in \cite{HX1} , one has
\begin{equation}\label{keyobservation}
\begin{split}
|u|_\infty\leq & C|u|^{\frac{1}{2}}_6|u|^{\frac{1}{2}}_6\leq C|\nabla u|^{\frac{1}{2}}_2 (|F|^{\frac{1}{2}}_6+|\omega|^{\frac{1}{2}}_6+|P|^{\frac{1}{2}}_6)\\
\leq &C|\nabla u|^{\frac{1}{2}}_2(|\rho \dot{u}|_2+|P|_6)^{\frac{1}{2}},
\end{split}
\end{equation}
which, together with (\ref{lagrangian1q}), (\ref{lagrangian3q}) and (\ref{largetimehlx}), implies that
\begin{equation}\label{laile}
\limsup_{t\rightarrow +\infty} |u(t,x)|_{\infty}=0.
\end{equation}

Finally, according to  (\ref{laile}), and    Theorem \ref{th:2.20},  if  $m(0)>0$ and $|\mathbb{P}(0)|>0$,  the global solution obtained in Step 2 above can not keep the conservation of momentum for all the time $t\in (0,\infty)$.\end{proof}

\bigskip

{\bf Conflict of Interest:} The authors declare that they have no conflict of interest.

\bigskip

{\bf Acknowledgement:} This research is partially supported by Zheng Ge Ru Foundation, Hong Kong RGC Earmarked Research Grants  CUHK 14300917, CUHK-14305315 and CUHK 4048/13P, NSFC/RGC Joint Research Scheme Grant N-CUHK 443-14, and a Focus Area Grant from The Chinese University of Hong Kong. Zhu's research is also  supported in part
by National Natural Science Foundation of China under grant 11231006,  Natural Science Foundation of Shanghai under grant 14ZR1423100,  Australian Research Council grant DP170100630 and Newton International Fellowships NF170015.

\bigskip


\begin{thebibliography}{99}
\bibitem{bjr} J. Boldrini,  M. Rojas-Medar and  E. Fern$\acute{\text{a}}$ndez-Cara, Semi-Galerkin approximation and regular solutions to the equations of the nonhomogeneous asymmetric fluids, \textit{J. Math. Pure  Appl.} \textbf{82} (2003), 1499-1525.
\bibitem{bd2} D. Bresch,  B. Desjardins and C. Lin,  On some compressible fluid models: Korteweg, Lubrication, and Shallow water systems, \textit{Commun. Part. Differ.  Equations}  \textbf{28} (2003), 843-868.

 \bibitem{bd} D. Bresch,  B. Desjardins and G. M$\acute{\text{e}}$tivier,  Recent mathematical results and open problems about shallow water equations, \textit{Anal. Simu. Fluid Dynam.} (2006), 15-31.



\bibitem{chap}
S. Chapman and T.  Cowling, {\sl
 The mathematical theory of non-uniform gases: an account of the kinetic theory of viscosity, thermal conduction and diffusion in gases}, Cambridge University Press, 1990.

\bibitem{CK3} Y. Cho, H. Choe and H. Kim, Unique solvability of the initial boundary value problems for compressible viscous  fluids, \textit{J. Math. Pure. Appl.} \textbf{83} (2004), 243-275.

%
\bibitem{guahu} Y. Cho and  H. Kim, On classical solutions of the compressible Navier-Stokes equations with nonnegative initial densities, \textit{Manu. Math.}  \textbf{120} (2006), 91-129.




\bibitem{ding} M. Ding and S. Zhu, Vanishing viscosity limit of the Navier-Stokes equations to the Euler equations for compressible fluid flow with  far field vacuum,  \textit{J.   Math. Pure  Appl.} \textbf{107} (2017),  288-314.




%
%



%


\bibitem{gandi} G. Galdi, \textit{An introduction to the Mathmatical Theorey of the Navier-Stokes equations}, Springer, New York, 1994.

\bibitem{zhu} Y. Geng, Y. Li and S., Zhu, Vanishing viscosity limit of the Navier-Stokes equations to the Euler equations for compressible  fluid flow with vacuum, submitted, 2015.




\bibitem{HX1} X. Huang, J. Li and  Z. Xin, Global well-posedness of classical solutions with large oscillations and vacuum to the Three-Dimensional Isentropic Compressible Navier-Stokes Equations, \textit{ Comm. Pure. Appl. Math.} \textbf{65} (2012), 549-585.


\bibitem{jiuping} Q. Jiu, Y. Wang and Z. Xin,   Global well-posedness of 2D compressible Navier-Stokes equations with large data and vacuum,  \textit{J.  Math. Fluid  Mech.} \textbf{16} (2014), 483-521.

\bibitem{KA} S. Kawashima, Systems of a hyperbolic-parabolic composite type, with applications to the equations of magnetohydrodynamics, Ph.D thesis, Kyoto University, DOI: 10.14989/doctor.k3193, 1983.

%
%
%



 \bibitem{oar} O.  Ladyzenskaja and N. Ural'ceva,\textit{ Linear and Quasilinear Equations of Parabolic Type}, American Mathematical Society, Providence, RI, 1968.

\bibitem{ins} H.  Li, Y. Wang  and Z. Xin,   Non-existence of classical solutions with finite energy to the Cauchy problem of the compressible Navier-Stokes equations, submitted,     arXiv:1706.01808,     2017.


\bibitem{lz} J. Li and Z. Xin, Global existence of weak solutions to the barotropic compressible Navier-Stokes flows with degenerate viscosities, submitted,  arXiv:1504.06826,  2016.













\bibitem{tlt} Tatsien Li  and T. Qin, \textit{Physics and Partial Differential Equations},  Siam: Philadelphia,   Higher Education Press: Beijing, 2014.

   \bibitem{hyp}Y. Li, R. Pan and S. Zhu, Recent progress on   classical  solutions for isentropic  compressible   Navier-Stokes equations  with degenerate viscosities  and vacuum,  \textit{Bulletin of the Brazilian Mathematical Society}, New Series \textbf{47} (2016), 507-519


\bibitem{sz3} Y. Li, R. Pan and S. Zhu, On classical solutions to 2D Shallow water equations with degenerate viscosities,  \textit{J.  Math. Fluid  Mech.} \textbf{19} (2017), 151-190; arXiv:1407.8471v2, 2014.

\bibitem{sz333} Y. Li, R. Pan and S. Zhu, On classical solutions for viscous polytropic fluids with degenerate viscosities  and vacuum, submitted, arXiv: 1503.05644, 2016.


%


%




\bibitem{taiping} T. Liu, Z. Xin and  T. Yang, Vacuum states for compressible flow, \textit{Discrete Contin. Dynam. Systems} \textbf{4} (1998), 1-32.











\bibitem{amj} A. Majda, \textit{Compressible fluid flow and systems of conservation laws in several space variables}, Applied Mathematical Science \textbf{53} Spinger-Verlag: New York, Berlin Heidelberg, 1986.



\bibitem{mat}
A. Matsumura and T. Nishida, The initial value problem for the equations of motion of viscous and heat-conductive gases. \textit{J. Math. Kyoto Univ.}  \textbf{20} (1980), 67-104.


\bibitem{vassu} A. Mellet and A. Vasseur, On the barotropic compressible Navier-Stokes equations,
\textit{Commun. Part. Differ. Equations} \textbf{32} (2007), 431--452.

%

\bibitem {nash}
  J. Nash,
Le probleme de Cauchy pour les {\'e}quations diff{\'e}rentielles d\'un fluide g{\'e}n{\'e}ral,
  \textit{Bull. Soc. Math. France}  \textbf {90} (1962), 487-491.


\bibitem{serrin} J. Serrin, On the uniqueness of compressible fluid motion,  \textit{Arch. Rational. Mech. Anal.} \textbf{3}  (1959), 271-288.




\bibitem{jm} J. Simon, Compact sets in $L^P(0,T;B)$, \textit{Ann. Mat. Pura. Appl.} \textbf{ 146} (1987), 65-96.



 \bibitem{harmo} E.  Stein, \textit{Singular integrals and Differentiablility properties of Functions}, Princeton Univ. Press, Princeton NJ, 1970.
%

%

\bibitem{zx} Z.  Xin, Blow-up of smooth solutions to the compressible Navier-Stokes Equation with Compact Density, \textit{Commun. Pure. App. Math.} \textbf{ 51} (1998), 0229-0240.





\bibitem{zz} Z.  Xin and S. Zhu, Global well-posedness of  regular solutions   to the three-dimensional  isentropic  compressible  Navier-Stokes Equations  with degenerate viscosities and vacuum, submitted,  arXiv: 1806.02383, 2018.

\bibitem{XZ-L2} Z.  Xin and S.  Zhu, Remarks on the existence  of  regular solutions with finite energy     to the three-dimensional  isentropic  compressible  Navier-Stokes Equations, prepare.


%
\bibitem{zyj} T. Yang and  H. Zhao, A vacuum problem for the one-dimensional compressible Navier-Stokes equations with density-dependent viscisity,  \textit{J. Differential Equations} \textbf{184} (2002), 163-184.


\bibitem{sz34}  S. Zhu,  Well-posedness and singularity formation of isentropic  compressible  Navier-Stokes equation,   PH.D Thesis, Shanghai Jiao Tong University,  2015.


\end{thebibliography}
\end{document}